\theoremstyle{plain}
\newtheorem{thm}{Theorem}[section]
\newtheorem{cor}[thm]{Corollary}
\newtheorem{lemma}[thm]{Lemma}
\newtheorem{prop}[thm]{Proposition}
\numberwithin{equation}{section}
\theoremstyle{definition}
\newtheorem{defn}[thm]{Definition}
\newtheorem{example}[thm]{Example}
\newtheorem{rmk}[thm]{Remark}
\newtheorem{question}[thm]{Question}
\theoremstyle{remark}
\newcommand{\BC}{{\mathbb{C}}}
\newcommand{\BG}{{\mathbb{G}}}
\newcommand{\BQ}{{\mathbb{Q}}}
\newcommand{\BR}{{\mathbb{R}}}
\newcommand{\BZ}{{\mathbb{Z}}}
\newcommand{\CA}{{\mathcal A}}
\newcommand{\CB}{{\mathcal B}}
\newcommand{\CC}{{\mathcal C}}
\newcommand{\CD}{{\mathcal D}}
\newcommand{\CE}{{\mathcal E}}
\newcommand{\CF}{{\mathcal F}}
\newcommand{\CH}{{\mathcal H}}
\newcommand{\CL}{{\mathcal L}}
\newcommand{\CM}{{\mathcal M}}
\newcommand{\CN}{{\mathcal N}}
\newcommand{\CO}{{\mathcal O}}
\newcommand{\CP}{{\mathcal P}}
\newcommand{\CT}{{\mathcal T}}
\newcommand{\CW}{{\mathcal W}}
\newcommand{\CY}{{\mathcal Y}}
\newcommand{\Fm}{{\mathfrak{m}}}
\newcommand{\pt}{{\mathsf{p}}}
\newcommand{\ch}{{\mathrm{ch}}}
\newcommand{\td}{{\mathrm{td}}}
\DeclareFontFamily{OT1}{rsfs}{}
\DeclareFontShape{OT1}{rsfs}{n}{it}{<-> rsfs10}{}
\DeclareMathAlphabet{\curly}{OT1}{rsfs}{n}{it}
\renewcommand\hom{\curly H\!om}
\newcommand\ext{\curly Ext}
\newcommand\Ext{\operatorname{Ext}}
\newcommand\Hom{\operatorname{Hom}}
\newcommand\End{\operatorname{End}}
\newcommand{\Aut}{\operatorname{Aut}}
\newcommand{\p}{\mathbb{P}}
\newcommand\Spec{\operatorname{Spec}}
\newcommand{\Coh}{\mathrm{Coh}}
\newcommand{\Pic}{\mathop{\rm Pic}\nolimits}
\newcommand{\ST}{\mathsf{ST}}
\newcommand{\T}{\mathsf{T}}
\newcommand{\Hilb}{\mathsf{Hilb}}
\newcommand{\Sym}{\mathrm{Sym}}
\newcommand{\Span}{\mathrm{Span}}
\newcommand{\Stab}{\mathrm{Stab}}
\newcommand{\id}{\mathrm{id}}
\newcommand{\FM}{\mathsf{FM}}
\newcommand{\F}{\mathsf{F}}
\newcommand{\G}{\mathsf{G}}
\newcommand{\St}{\mathfrak{St}}
\newcommand{\Grpds}{\mathfrak{Grpds}}
\newcommand{\GSt}{{G\textup{-}\mathfrak{St}}}
\newcommand{\CAob}{\CA_{\omega,\beta}}
\newcommand{\Zob}{Z_{\omega,\beta}}
\newcommand{\Cats}{\mathfrak{Cats}}
\newcommand{\GCats}{G\textup{-}\mathfrak{Cats}}
\newcommand{\Ker}{\mathrm{Ker}}
\newcommand{\Ind}{\mathrm{Ind}}
\newcommand{\LGalg}{\Lambda^G_{\textup{alg}}}
\begin{document}
\title[Equivariant categories and fixed loci]{Equivariant categories of symplectic surfaces and fixed loci of Bridgeland moduli spaces}

\author{Thorsten Beckmann}
\address{Universit\"at Bonn, Mathematisches Institut}
\email{beckmann@math.uni-bonn.de}

\author{Georg Oberdieck}
\address{Universit\"at Bonn, Mathematisches Institut}
\email{georgo@math.uni-bonn.de}
\date{\today}

\begin{abstract}\hspace{-3pt}
Given an action of a finite group $G$ on the derived category of a smooth projective variety $X$ we relate the fixed loci of the induced $G$-action on moduli spaces of stable objects in $D^b(\mathrm{Coh}(X))$ with moduli spaces of stable objects in the equivariant category $D^b(\mathrm{Coh}(X))_G$. As an application we obtain a criterion for the equivariant category of a symplectic action on the derived category of a symplectic surface to be equivalent to the derived category of a surface. This generalizes the derived McKay correspondence, and yields a general framework for describing fixed loci of symplectic group actions on moduli spaces of stable objects on symplectic surfaces.
%In the appendix we prove that for every distinguished stability condition on a K3 surface $S$ after a $\widetilde{\mathrm{GL}^{+}}(2,\BR)$-shift its heart $\CA$ satisfies $D^b(\CA) \cong D^b(S)$.
\end{abstract}

\maketitle
\vspace{-15pt}
\setcounter{tocdepth}{1} 
\tableofcontents

\section{Introduction} 
\subsection{Equivariant categories} \label{subsec:main results}
Let $S$ be a smooth complex projective surface which is symplectic, hence either a K3 or abelian surface.
Whenever a finite group $G$ acts symplectically on $S$,
the derived McKay correspondence provides an equivalence between
the category $D^b(S)_G$
of $G$-equivariant objects in the derived category $D^b(S)$,
and the derived category of the minimal resolution of the quotient $S/G$.
The equivariant category $D^b(S)_G$ depends only on the action of $G$ on the derived category and not on the underlying surface. Hence we may ask
whether a similar correspondence can be formulated for group actions on the derived category
which do not come from an action on the surface.
Our first result considers this question under the following assumptions:

Let $\rho$ be the action of a finite group $G$ on $D^b(S)$ satisfying the following conditions:
\begin{enumerate}
\item[(i)] For every $g \in G$ the equivalence $\rho_g \colon D^b(S) \to D^b(S)$ is symplectic.
\item[(ii)] There exists a stability condition $\sigma \in \Stab^{\dagger}(S)$ which is fixed by every $\rho_g$.
%\item[(ii)] The stability condition $\sigma$ is fixed by every $\rho_g$.
\item[(iii)] The group $G$ acts faithfully, \emph{i.e.}\ the equivariant category is indecomposable.
\end{enumerate}
Here an equivalence %$F \colon D^b(S) \to D^b(S)$
is \emph{symplectic} if the induced action on singular cohomology $H^\ast(S,\BZ)$
preserves the class of the symplectic form.
We let $\Stab^{\dagger}(S)$
be the distinguished connected component of the space of Bridgeland stability conditions of $D^b(S)$ introduced in \cite{BridgelandK3}.
The action $\rho$ is faithful, if $\rho_g \not\cong \id$ for all $g \neq 1$.
Also no generality is lost by assuming (iii)
since for non-faithful actions the equivariant category
decomposes as an orthogonal sum where each summand is determined by a faithful action on $D^b(S)$, see \cite{Notes}.
By the derived Torelli theorem for symplectic surfaces \cite[Thm.\ 0.1]{HuybrechtsConway}, group actions satisfying these conditions
can be constructed using lattice methods.
In particular, there are many such group actions
which do not arise from automorphisms of the surface even after deformation.

Write $\Lambda = H^{2 \ast}(S,\BZ)$ for the even cohomology lattice and let
$\Lambda_{\mathrm{alg}}^G$ be the invariant sublattice of the induced $G$-action on its algebraic part 
\[ \Lambda_{\mathrm{alg}} = \Lambda \cap (H^0(S,\BC) \oplus H^{1,1}(S,\BC) \oplus H^{4}(S,\BC)). \]
%be its algebraic part.  We write
%$\Lambda_{\mathrm{alg}}^G$ for the $G$-invariant sublattice.
%with respect to the induced $G$-action.
%The Mukai vector of an element $E \in D^b(S)$ is defined by $v(E) = \ch(E) \sqrt{\td(S)} \in \Lambda_{\mathrm{alg}}$.
%For every $E \in D^b(S)$ 
%we define its Mukai vector by
%\[ v(E) = \ch(E) \sqrt{\td(S)} \in \Lambda_{\mathrm{alg}}. \]
%The induced $G$-action on cohomology preserves the sublattice $\Lambda_{\mathrm{alg}}$. 
Let $M_{\sigma}(v)$ be a moduli space of $\sigma$-semistable objects of Mukai vector $v \in \LGalg$.
%where $\sigma \in \Stab^{\dagger}(S)$ is fixed by $G$.
%Since both the vector $v$ and the stability condition $\sigma$ are $G$-invariant
%there is an induced $G$-action on $M_{\sigma}(v)$.
For the induced $G$-action on $M_{\sigma}(v)$ we prove the following:
%If $v$ is $G$-invariant, then we have an induced action of $G$ on $M_{\sigma}(v)$.
%Let $G^{\vee} = \Hom(G, \BC^{\ast})$ be the group of characters of $G$.

\begin{thm} \label{mainthm1}
Assume that $M_{\sigma}(v)$ is a fine moduli space
and that the fixed locus $M_{\sigma}(v)^G$ has a $2$-dimensional $G$-linearizable connected component $F$.
Then there exists a subgroup $H \subset G^{\vee} = \Hom(G, \BC^{\ast})$, a connected $H$-torsor $S' \to F$ and an equivalence
\[ D^b(S') \xrightarrow{\cong} D^b(S)_G. \]
%induced by the restriction of the universal family to $S'\times S$.
\end{thm}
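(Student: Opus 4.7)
The plan is to realise $F$, up to an étale cover $S' \to F$, as a fine moduli space of simple equivariant objects in $D^b(S)_G$, and then to check that the associated Fourier--Mukai functor is an equivalence by a Bridgeland--Maciocia-style criterion.

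First I would construct the $H$-torsor $S' \to F$. Let $\CE$ be the universal family on $M_\sigma(v) \times S$, existing by fineness, and restrict to $\CE_F := \CE|_{F \times S}$. Since $G$ acts trivially on $F$, the fibrewise isomorphisms $\rho_g(\CE_F|_{\{x\}\times S}) \cong \CE_F|_{\{x\}\times S}$ organise into a projective $G$-action on $\CE_F$, or equivalently into a $G^\vee$-banded gerbe $\CG$ on $F$ whose global trivialisations are exactly the $G$-linearisations of $\CE_F$. The assumption that $F$ is $G$-linearisable forces $\CG$ to be locally trivial, so $\CG$ is represented by an étale $G^\vee$-torsor $\tilde{S} \to F$. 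Set $S' \subset \tilde{S}$ to be any connected component; automatically $S' \to F$ is an $H$-torsor for $H \subset G^\vee$ the stabiliser of the component.

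Next I would produce the Fourier--Mukai functor. By construction, the pullback $\CE_{S'}$ of $\CE$ to $S' \times S$ carries a canonical $G$-linearisation with respect to the action $\id_{S'} \times \rho$. Viewing $\CE_{S'}$ as an object of $D^b(S' \times S)_G$ defines a Fourier--Mukai functor $\Phi \colon D^b(S') \to D^b(S)_G$ whose value on a skyscraper $\CO_x$ is a simple $G$-equivariant object lifting the stable object of $F$ beneath $x$. Distinct points of $S'$ produce non-isomorphic equivariant objects: either their underlying points in $F$ differ, or they sit in the same fibre but carry linearisations differing by a nontrivial character in $H$.

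The remaining and main step is to prove that $\Phi$ is an equivalence. I would appeal to a Mukai/Bondal--Orlov criterion ported into the equivariant setting: verify $\Ext$-orthogonality of $\Phi(\CO_x)$ and $\Phi(\CO_{x'})$ for $x \neq x'$, and check that $\Phi$ intertwines Serre functors. The symplectic hypothesis (i) is precisely what forces the Serre functor of $D^b(S)_G$ to be a shift of the identity, matching the Serre functor of the symplectic surface $D^b(S')$; this is the key compatibility. Essential surjectivity then follows by a spanning-class argument, once one knows that the family $\{\Phi(\CO_x)\}$ generates $D^b(S)_G$. The main obstacle is exactly this last point: porting Bridgeland--Maciocia's spanning-class/Fourier--Mukai equivalence argument into the equivariant framework, where one cannot appeal directly to the geometry of an ambient surface and must instead exploit the Calabi--Yau-$2$ structure of $D^b(S)_G$ coming from (i). Once these pieces are in place, full faithfulness combined with Serre-compatibility and the spanning property yields the desired equivalence $D^b(S') \xrightarrow{\cong} D^b(S)_G$.
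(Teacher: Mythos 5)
Your overall strategy matches the paper's: build an \'etale cover $S'\to F$ from the linearization data of the universal family, equip the pullback of the universal family to $S'\times S$ with a $G$-linearization, and show the resulting equivariant Fourier--Mukai functor is an equivalence via a pointwise $\Ext$ computation. The paper packages the first step differently --- it identifies $S'$ as a connected component of the moduli space $M_{\sigma_G}(v')$ of stable objects in the equivariant category (via the general fixed-stack analysis of Theorem~\ref{thm:summary} and Proposition~\ref{prop_fixed_stack_for_fine_moduli_space}), which buys two things your direct gerbe-of-linearizations construction does not immediately give: fineness of $S'$ as a moduli space of equivariant objects, and the deformation-theoretic identification $\Ext^1_{D^b(S)_G}(\CE_x,\CE_x)\cong T_{S',x}\cong\BC^2$ (Proposition~\ref{def_obst_thy}), which is needed for the point-like condition at $x=y$ and which your sketch does not address.

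The genuine gap is the step you yourself flag as ``the main obstacle'': you do not actually close the essential surjectivity / generation argument, and the way you propose to close it (a spanning-class argument) is not how it can be done here, since one has no ambient geometry for $D^b(S)_G$. The paper's resolution is the Bridgeland--King--Reid intertwinement criterion (\cite[Thm.\ 2.3]{BKR}, quoted as Proposition~\ref{prop_equivalence_criterion}): because $\F_{\CE}$ has both adjoints (Lemma~\ref{lemma_adjoints}), satisfies the pointwise $\Ext$ conditions, and commutes with Serre functors, its essential image is an admissible subcategory whose orthogonal complement is preserved by the Serre functor; it is then the \emph{indecomposability} of $D^b(S)_G$ --- i.e.\ hypothesis (iii), faithfulness of the $G$-action, established in Proposition~\ref{prop_indecomposable} --- that forces the complement to vanish. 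Your proposal never invokes (iii), and without it the argument cannot be completed: for a non-faithful action the equivariant category genuinely decomposes and the functor would fail to be essentially surjective. So you have correctly identified all the ingredients except the one that actually finishes the proof.
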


We say here that a connected component of $M_{\sigma}(v)^G$ is \emph{$G$-linearizable} if
for some (or equivalently any) point on it the corresponding $G$-invariant object in $D^b(S)$ admits a $G$-linearization.
By a result of Ploog \cite{Ploog} the obstruction to such a linearization is an element in the second group cohomology $H^2(G, \BC^{\ast})$.
Hence for groups where this cohomology vanishes, such as cyclic groups, the condition on $F$ to be $G$-linearizable is automatically satisfied.

Recall from \cite{BM,HuyLehn} that every fine moduli space $M_{\sigma}(v)$ is smooth and inherits a symplectic form from the surface $S$.
By assumption (i) the $G$-action preserves this symplectic form.
Hence, its fixed locus is smooth and symplectic, so $S'$ is a symplectic surface.
%We see that Theorem~\ref{mainthm1} provides the desired equivalence between the equivariant category and the derived category of a symplectic surface.
%
%
If the action of $G$ is induced by an action on the surface $S$,
then Theorem~\ref{mainthm1} recovers the usual derived McKay correspondence by taking the moduli space to be the Hilbert scheme of points
$\Hilb^{|G|}(S)$
(the component $F$ is the closure of the locus of free orbits).

Theorem~\ref{mainthm1} applies also to coarse moduli spaces $M_{\sigma}(v)$ of stable objects 
with the only difference that $D^b(S')$
has to be replaced by the derived category of $\alpha$-twisted coherent sheaves $D^b(S',\alpha)$,
where $\alpha \in \mathrm{Br}(S')$ is the Brauer class obtain from the universal family of $M_{\sigma}(v)$ by restriction.
For a more general version of the theorem which applies also to moduli spaces containing strictly semistable points, see Section~\ref{subsec:stronger version of thm1}.

\subsection{Fixed loci}
The result above relies on a general relationship between fixed loci of moduli spaces of (semi)stable objects and the equivariant category.

Let $X$ be a smooth projective variety and let 
\[ \Stab^{\ast}(X) \subset \Stab(X) \]
be a connected component of the space of stability conditions satisfying the technical condition ($\dagger$) of Section~\ref{sec:summary}.
The existence of components $\Stab^{\ast}(X)$ satisfying ($\dagger$)
is known for arbitrary curves and surfaces, as well as for certain threefolds, %Fano and Calabi--Yau threefolds,
see \cite[Rem.\ 26.4]{StabinFamilies} and references therein.
Moreover, %by work of Alper, Halpern-Leistner, and Heinloth
as shown in \cite{AHLH} there exists good moduli spaces of semistable objects with respect to any stability condition in $\Stab^{\ast}(X)$.

Consider an action on $D^b(X)$ by a finite group $G$. %Given a finite group action on $D^b(X)$
%A result of Macr\'i, Mehrotra, and Stellari  shows that 
Any $G$-invariant stability condition $\sigma \in \Stab(X)$
yields an induced stability condition $\sigma_G$ on the equivariant category \cite{MMS}.
If moreover $\sigma \in \Stab^{\ast}(X)$, then
we will prove that
there exists proper good moduli spaces $M_{\sigma_G}(v')$ of $\sigma_G$-semistable objects
in $D^b(X)_G$, see Theorem~\ref{thm:existence gms}.

\begin{thm} \label{thm:summary}
Let $\sigma \in \Stab^{\ast}(X)$ be $G$-invariant
and let $M$ be a smooth good moduli space of $\sigma$-stable objects
in $D^b(X)$ of class $v \in K(D^b(X))^G$.
Then the natural morphism 
\begin{equation} 
%\bigsqcup_{v' \in R_v} 
\bigsqcup_{v' \mapsto v} 
M_{\sigma_G}(v') \, \to\, M^G \label{thm:summary:map} \end{equation}
is a $G^{\vee}$-torsor over the union of all $G$-linearizable connected components of $M^G$.
Here $v'$ runs over all classes in $K(D^b(X)_G)$ mapping to $v$ under the forgetful functor.

Furthermore, \eqref{thm:summary:map} is surjective
if $H^2(G,\BC^{\ast}) = 0$ or, more generally, if the $G$-action on $D^b(X)$ factors through the action of a quotient $G \twoheadrightarrow Q$, such that $G$ is a Schur covering group of $Q$.
%\end{enumerate}
\end{thm}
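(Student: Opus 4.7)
The plan is to analyze the map
\[ \Phi \colon \bigsqcup_{v' \mapsto v} M_{\sigma_G}(v') \to M^G \]
first pointwise and then promote the torsor structure to the scheme level via deformation theory. A closed point of $M^G$ corresponds to a $\sigma$-stable object $E \in D^b(X)$ with $\rho_g(E) \simeq E$ for every $g \in G$, and the fiber of $\Phi$ over such $[E]$ is the set of isomorphism classes in $D^b(X)_G$ of $G$-linearizations of $E$. Since $E$ is simple, $\Aut(E) = \BC^\ast$ with trivial $G$-action, so the Ploog obstruction formalism cited in the introduction gives that this fiber is empty precisely when the obstruction of $[E]$ in $H^2(G, \BC^\ast)$ is nonzero, and otherwise a torsor under $H^1(G, \BC^\ast) = G^\vee$. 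Since $G$-linearizability is locally constant on $M^G$, the fiber is nonempty exactly over the union of $G$-linearizable components.

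Second, I would construct a natural action of $G^\vee$ on $\bigsqcup_{v'} M_{\sigma_G}(v')$ compatible with $\Phi$: a character $\chi \in G^\vee$ acts on equivariant objects by $(E, \phi) \mapsto (E, (\chi(g)\phi_g)_g)$, which preserves the underlying $E$ but permutes the components indexed by $v'$. Freeness is a short check: any isomorphism $(E, \phi) \xrightarrow{\sim} (E, \chi\phi)$ must be a scalar $\alpha \in \Aut(E) = \BC^\ast$, and the equivariance relation $\alpha \phi_g = \chi(g) \phi_g \rho_g(\alpha)$ together with $\rho_g(\alpha) = \alpha$ forces $\chi = 1$. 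Transitivity on nonempty fibers is the pointwise torsor statement above. To upgrade to a scheme-theoretic torsor I would compare tangent and obstruction spaces: at $[(E,\phi)] \in M_{\sigma_G}(v')$, deformation theory in $D^b(X)_G$ yields $\Ext^1_{D^b(X)_G}((E,\phi),(E,\phi)) = \Ext^1(E,E)^G$, matching $T_{[E]} M^G = \Ext^1(E,E)^G$; smoothness of $M$, together with an analogous identification on $\Ext^2$, then gives smoothness of both sides and \'etaleness of $\Phi$ over the $G$-linearizable locus. Combined with the pointwise torsor structure, this yields the first assertion.

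For the surjectivity statements, if $H^2(G,\BC^\ast) = 0$ then the Ploog obstruction vanishes universally, so every component of $M^G$ is $G$-linearizable. For the Schur cover case, the obstruction for any $G$-invariant object originates from $H^2(Q, \BC^\ast)$ via the factorization $G \twoheadrightarrow Q$, and the defining property of a Schur covering group $G \to Q$ is precisely that the pullback $H^2(Q, \BC^\ast) \to H^2(G, \BC^\ast)$ is zero; hence every $G$-invariant object admits a $G$-linearization. The main obstacle I anticipate is the scheme-theoretic upgrade, namely producing the $G^\vee$-action in families and identifying the \'etale local model of $\Phi$ rigorously, rather than the essentially formal pointwise and obstruction-theoretic steps.
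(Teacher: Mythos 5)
Your proposal is correct in outline, but it takes a genuinely different route from the paper. The paper argues at the level of stacks: it identifies $\CM_{\sigma}(v)^G = \bigsqcup_{v'} \CM_{\sigma_G}(v')$ (Proposition~\ref{prop_fixed_stack_Semistable}), trivializes $\CM_{\sigma}(v) \cong M \times B\BG_m$ via the universal family, encodes each $\rho_g$ as a pair $(F_g,\CL_g)$ of an automorphism and a line bundle, and shows that over a $G$-linearizable component $F \subset M^G$ the fixed stack is $Y \times B\BG_m$ with $Y = \Spec\bigl(\oplus_{g \in G_{\mathrm{ab}}} \CL_g|_F\bigr)$; the $G^{\vee}$-torsor structure of $Y \to F$ is then manifest from the algebra structure (Proposition~\ref{prop_fixed_stack_for_fine_moduli_space} and Remark~\ref{remark_Gdual_action}). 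You instead establish simple transitivity of $G^{\vee}$ on geometric fibers via Ploog's lemma and promote this to a torsor by proving $\Phi$ is \'etale through the identifications $\Ext^i_{D^b(X)_G}((E,\phi),(E,\phi)) = \Ext^i(E,E)^G$. This does work: the equivariant obstruction class injects into $\Ext^2(E,E\otimes I)_0$ and is killed by smoothness of $M$, the differential of $\Phi$ is the identity on $\Ext^1(E,E)^G = T_{[E]}M^G$, and a surjective separated \'etale map with a free, fiberwise simply transitive action of a finite group is a torsor. Two points you leave implicit deserve care: (i) that $G$-linearizability is constant on connected components of $M^G$ --- in your framework this follows because the image of $\Phi$ is open (\'etaleness) and closed (properness of the source, Theorem~\ref{thm:existence gms}), whereas the paper deduces it from the global isomorphisms $\theta_{g,h}\colon \CL_g\otimes\CL_h \to \CL_{gh}$; and (ii) that the $\chi$-twist descends to the good moduli spaces, permuting the $M_{\sigma_G}(v')$, which is immediate since it preserves $\sigma_G$-stability. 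What the paper's construction buys is an explicit global model of the torsor in terms of the line bundles $\CL_g$ measuring the failure of the universal family to be equivariant --- this detects nontriviality of the torsor (cf.\ Example~\ref{Example_EE}) and is reused in the proof of Theorem~\ref{mainthm1} --- while your argument is more elementary, bypasses the gerbe analysis, and applies verbatim to coarse moduli spaces since it never invokes a universal family. Your Schur-cover surjectivity argument coincides with the paper's.
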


The notion of a Schur covering group will be reviewed in Section~\ref{sec:equiv_cat}.

Theorem~\ref{thm:summary} serves as a bridge between the geometry of the fixed locus $M^G$ and the formal properties of the equivariant category.
Information can flow in both ways:
It can be used to describe moduli spaces of stable objects in the equivariant category in terms of the fixed loci, for example showing projectivity.
This generalizes an approach of Nuer towards the moduli space of stable objects on an Enrique surface \cite{Nuer}.
In the case of Theorem~\ref{mainthm1} it is used to determine the equivariant category.
% (take $G$ to be generated by the fixed point free involution on a K3 surface).
%
In the opposite direction, if one knows that the equivariant category 
is equivalent to the derived category of a variety whose moduli spaces are well-understood
(e.g.\ a curve, $\p^2$ or a symplectic surface\footnote{Strictly
speaking, for symplectic surfaces one also needs to know that the induced stability condition $\sigma_G$ lies in the distinguished component.
This is proven in Section~\ref{sec:induced stability} if the equivalence is induced by a Fourier--Mukai kernel as in Theorem~\ref{mainthm1}.
}),
then the left hand side of \eqref{thm:summary:map} determines the $G$-linearizable part of the fixed locus up to an \'etale cover.

\subsection{Back to symplectic surfaces}
Consider again a $G$-action on the derived category of a symplectic surface $S$ satisfying (i)-(iii).
Assume that we have an equivalence
\[ D^b(S',\alpha) \xrightarrow{\cong} D^b(S)_{G} \]
for a symplectic surface $S'$ with Brauer class $\alpha \in \mathrm{Br}(S')$.
Let $v \in \Lambda_{\mathrm{alg}}^G$ and define
\[ R_v = \{ v' \in \Lambda_{(S',\alpha),\mathrm{alg}} \mid v' \mapsto v \}, \]
where the algebraic part $\Lambda_{(S',\alpha), \mathrm{alg}}$ of the lattice $H^{2\ast}(S',\BZ)$
is taken with respect to $\alpha$ \cite{HuybrechtsStellariHodge}.
If $M_{\sigma}(v)$ is a moduli space of stable objects,
then Theorem~\ref{thm:summary} shows that
\begin{equation*} \bigsqcup_{v' \in R_v} M_{\sigma_G}(v') \to M_{\sigma}(v)^G \end{equation*}
is a $G^{\vee}$-torsor over the union of all $G$-linearizable components.

In a special case we can be more precise. Consider a set of representatives
\[ \overline{R}_v \subset \Lambda_{(S',\alpha), \mathrm{alg}} \]
for the coset $R_v / G^{\vee}$
where the $G^{\vee}$-action is induced by the action on the equivariant category by twisting the linearization, see Section~\ref{sec:equiv_cat}.

\begin{thm} \label{mainthm4}
Suppose that $G$ is cyclic and that $S'$ is a K3 surface.
If $M_{\sigma}(v)$ is a moduli space of stable objects,
then we have an isomorphism
\begin{equation} M_{\sigma}(v)^G \cong \bigsqcup_{v' \in \overline{R}_v} M_{\sigma_G}(v'). \label{esdf} \end{equation}
\end{thm}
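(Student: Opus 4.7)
My plan is to apply Theorem~\ref{thm:summary} to the $G$-action on $M_\sigma(v)$ and then reduce the resulting $G^\vee$-torsor to the disjoint union stated in the theorem. Since $G$ is cyclic, $H^2(G,\BC^\ast)=0$, so every connected component of the fixed locus $M_\sigma(v)^G$ is $G$-linearizable. Theorem~\ref{thm:summary} then produces a surjective $G^\vee$-torsor
\[ \pi \colon \bigsqcup_{v' \in R_v} M_{\sigma_G}(v') \twoheadrightarrow M_\sigma(v)^G, \]
where $\chi \in G^\vee$ sends $(F,\phi) \in M_{\sigma_G}(v')$ to $(F,\chi\phi) \in M_{\sigma_G}(\chi \cdot v')$, permuting components via the natural $G^\vee$-action on $R_v$. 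Decomposing the source along $G^\vee$-orbits and using the torsor property, I obtain
\[ M_\sigma(v)^G \;\cong\; \bigsqcup_{v'' \in \overline{R}_v}\; M_{\sigma_G}(v'') \big/ \Stab_{G^\vee}(v''), \]
so the theorem reduces to proving that $\Stab_{G^\vee}(v'')$ is trivial for every $v'' \in \overline{R}_v$ with $M_{\sigma_G}(v'') \neq \emptyset$.

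The crux is therefore to show that no $G^\vee$-fixed class $v'' \in R_v$ supports a $\sigma_G$-stable object when $S'$ is a K3 surface. Suppose $(F,\phi)$ were such a stable object and $\chi \in G^\vee$, $\chi \neq 1$, fixed $v''$. Simplicity of stable equivariant objects gives $(F,\phi) \not\cong (F,\chi\phi)$, so these would be two distinct equivariant lifts of the underlying $\sigma$-stable object $E \in D^b(S)$ with the same class $v''$. Under the equivalence $D^b(S)_G \cong D^b(S',\alpha)$, the linearization twist realizes as an autoequivalence $\tau_\chi$ inducing a finite-order Hodge-symplectic isometry of the K3 Mukai lattice $\Lambda_{(S',\alpha)}$. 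I would derive a contradiction using the derived Torelli theorem for K3 surfaces together with the cyclic structure: a $\chi$-fixed primitive Mukai vector in the algebraic part must project under the forgetful map to a $|G|$-divisible class on $S$, contradicting the primitivity of $v$ that follows from the existence of $\sigma$-stable representatives.

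Once this freeness step is established, each $M_{\sigma_G}(v'')$ for $v'' \in \overline{R}_v$ injects into $M_\sigma(v)^G$, images of distinct representatives are disjoint, and together they cover $M_\sigma(v)^G$ by the torsor structure; the resulting bijection is an isomorphism of schemes since $\pi$ is \'etale and each $M_{\sigma_G}(v'')$ carries its natural good moduli structure. The main obstacle is precisely the freeness step: verifying that no $G^\vee$-fixed Mukai vector in $R_v$ supports a $\sigma_G$-stable object requires an intricate lattice-theoretic analysis linking the $G$-action on the Mukai lattice of $S$ with the $G^\vee$-action on the Mukai lattice of $(S',\alpha)$. Both the K3 hypothesis (for Torelli-type lattice rigidity) and the cyclicity of $G$ (ensuring $H^2(G,\BC^\ast)=0$ and permitting a single-generator analysis) are essential; weakening either leaves only the stabilizer-quotient decomposition from the first step.
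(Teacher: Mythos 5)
Your overall skeleton matches the paper's: apply Theorem~\ref{thm:summary}, use cyclicity of $G$ to get $H^2(G,\BC^\ast)=0$ and hence surjectivity of the torsor, decompose the source into $G^\vee$-orbits, and reduce everything to showing that the stabilizer $H=\Stab_{G^\vee}(v'')$ is trivial whenever $M_{\sigma_G}(v'')\neq\varnothing$. You also correctly observe that $H$ acts \emph{freely} on $M_{\sigma_G}(v'')$, since distinct linearizations of a simple object are non-isomorphic (Lemma~\ref{lem:Ploog}).

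The genuine gap is in your crux step. You propose to rule out a nontrivially-stabilized class $v''$ by a lattice-theoretic argument: that a $\chi$-fixed class in $R_v$ "must project under the forgetful map to a $|G|$-divisible class on $S$," contradicting primitivity of $v$. This is not carried out, and it is doubtful it can be completed as stated. First, $p_\ast(v'')=v$ is fixed by the definition of $R_v$, and the divisibility you need does not follow from Lemma~\ref{lem:cohomology}: what one gets from $q\circ p = \sum_\chi \chi$ is that a $G^\vee$-fixed $v''$ must equal $q(v)/|G|$, and whether this is integral (and whether it supports stable objects) is not decided by the lattice data alone --- indeed for $v=(0,0,2)$ in the Nikulin example of Section~\ref{subsec:dual action} the class $q(v)/2=(0,0,1)$ \emph{is} integral and \emph{does} support stable objects; only the hypothesis that $M_\sigma(v)$ contains no strictly semistable points excludes that case, and your argument does not use this hypothesis at the decisive moment. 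Second, primitivity of $v$ is not among the hypotheses and does not obviously follow from them. The paper's proof replaces all of this by a geometric argument: $H$ acts freely and, by Proposition~\ref{prop:calabi-yau}(ii), symplectically on the nonempty proper variety $M_{\sigma_G}(v'')$; since $S'$ is a K3 surface, $M_{\sigma_G}(v'')$ is an irreducible holomorphic symplectic variety when $\sigma_G$ is distinguished (and in general $\oplus_i H^{0,i}$ is generated by the conjugate symplectic class by Markman's theorem), so the holomorphic Lefschetz fixed point formula forces every nontrivial element of $H$ to have a fixed point --- whence $H=1$. This is where the K3 hypothesis actually enters, not through Torelli-type lattice rigidity. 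Without this (or an equivalent) input, your reduction to "triviality of stabilizers" remains unproven.
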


Our description of fixed loci can be applied whenever a group action on a moduli space of stable objects
is induced by a group action on the derived category.
Fortunately, it is an immediate consequence of work of Mongardi \cite{Mongardi2}, Huybrechts \cite{HuybrechtsConway}, and Bayer--Macr\'i \cite{BM}
that for K3 surfaces every symplectic group action is of this type. One has the following:

\begin{prop} \label{mainprop}
Let $S$ be a K3 surface and let $\sigma' \in \Stab^{\dagger}(S)$ be a stability condition.
Let $G$ be a finite group which acts faithfully and symplectically on
a moduli space $M = M_{\sigma'}(v)$ of $\sigma'$-stable objects. Then the following holds:
\begin{enumerate}
\item[(a)] There exists a surjection $G' \to G$ from a finite group $G'$
and an action of $G'$ on $D^b(S)$ satisfying (i), (ii) of Section~\ref{subsec:main results}
%(for some stability condition $\sigma \in \Stab^{\dagger}(S)$),
which induces the given $G$-action on $M$.
\item[(b)] If $G$ is cyclic, then we can take $G' = G$ in part (a).
\end{enumerate}
\end{prop}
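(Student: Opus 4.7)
The plan is to combine Mongardi's description of symplectic monodromies on K3-type hyperk\"ahler varieties \cite{Mongardi2}, Huybrechts's derived Torelli theorem \cite[Thm.\ 0.1]{HuybrechtsConway}, and Bayer--Macr\`i's analysis \cite{BM} of how autoequivalences of $D^b(S)$ act on moduli spaces of stable objects.

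For part (a), I would first use that each $g \in G$ induces a Hodge isometry of $H^2(M,\BZ)$ preserving the class of the symplectic form. Via the Mukai--Markman isomorphism $v^\perp \cong H^2(M,\BZ)$ together with Mongardi's theorem, this lifts canonically to an orientation-preserving Hodge isometry $\phi_g$ of the Mukai lattice $\tilde H(S,\BZ)$ fixing $v$, giving a homomorphism $\phi \colon G \to O^+_{\mathrm{Hdg}}(\tilde H(S,\BZ),v)$. Huybrechts's derived Torelli theorem then produces autoequivalences $\tilde\rho_g \in \Aut(D^b(S))$ realizing each $\phi_g$, which are automatically symplectic in the sense of condition (i). The failure of $g \mapsto \tilde\rho_g$ to be a homomorphism is a $2$-cocycle valued in the kernel $K$ of the forgetful map $\Aut(D^b(S)) \to O^+_{\mathrm{Hdg}}(\tilde H(S,\BZ))$; the associated central extension $G' \to G$ becomes finite once the shift part of $K$ (which acts trivially on the stability manifold up to its universal cover and on $M$) is absorbed into the choice of lifts. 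To obtain condition (ii), one averages an ample class on $M$ over $G$ and pulls the resulting $G$-invariant ample class back through the Bayer--Macr\`i map to produce a $G'$-invariant stability condition lying in the chamber of $\sigma'$, hence in $\Stab^\dagger(S)$. Equivariance of the Bayer--Macr\`i identification under autoequivalences ensures that the induced $G'$-action on $M_{\sigma'}(v)$ recovers the original $G$-action pointwise.

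For part (b), if $G = \langle g \rangle$ is cyclic of order $n$, only the single generator needs to be lifted. Any autoequivalence $\tilde\rho_g$ realizing $\phi_g$ satisfies $\tilde\rho_g^n \in K$, and one can compose $\tilde\rho_g$ with a suitable element of $K$ (using divisibility of the shift contribution modulo its action on $\Stab^\dagger(S)$) to arrange $\tilde\rho_g^n = \id$. The resulting cyclic subgroup $\langle \tilde\rho_g \rangle \subset \Aut(D^b(S))$ has order exactly $n$, so $G' = G$ works.

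The main obstacle is establishing finiteness of $G'$ in part (a): this requires careful control of the kernel $K$, specifically identifying its torsion-free part with the shift group and arguing that shifts can be absorbed because they act trivially on both $\Stab^\dagger(S)$ (modulo its universal cover) and on the moduli space. A secondary technical point is verifying that the $G'$-action on $M$ induced via Bayer--Macr\`i agrees pointwise with the original $G$-action, which follows from the naturality of Mukai--Markman and Bayer--Macr\`i under autoequivalences.
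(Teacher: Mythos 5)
Your overall strategy for part (a) --- lift the $G$-action on $H^2(M,\BZ)$ to the Mukai lattice via Mongardi, average an ample class and use the Bayer--Macr\`i map to produce a $G$-fixed stability condition in the chamber of $\sigma'$, then realize the lattice action by autoequivalences via derived Torelli --- is the same as the paper's. But you misidentify the source of the extension $G'\to G$, and this creates a real gap. Huybrechts's theorem \cite[Prop.\ 1.4]{HuybrechtsConway} already produces an honest finite \emph{subgroup} of $\Aut D^b(S)$ mapping isomorphically onto the group of Hodge isometries and fixing $\sigma$, so there is no $2$-cocycle valued in the kernel $K$ of $\Aut D^b(S)\to \mathrm{O}_{\mathrm{Hdg}}(\Lambda)$ to control (and your plan of controlling it by ``absorbing shifts'' would not work anyway: for a K3 surface $K$ is large and non-abelian, containing squares of spherical twists, not just shifts). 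The extension $G'\to G$ is needed for a reason you never address: a finite subgroup of $\Aut D^b(S)$ need not lift to an \emph{action} in the sense of Section~\ref{sec:equiv_cat}, i.e.\ with coherence isomorphisms $\theta_{g,h}$ satisfying associativity; the obstruction lives in $H^3(G,\BC^\ast)$ and is killed only after passing to a finite cover, which is Lemma~\ref{lem:obstruction to action}(b). Note also that to identify the induced action on $M$ with the given one, the paper uses the injectivity of $\Aut(M)\to \mathrm{O}(H^2(M,\BZ))$, which your appeal to ``naturality'' elides.

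Part (b) has the more serious gap. Arranging $\tilde\rho_g^{\,n}=\id$ in $\Aut D^b(S)$ by composing with elements of $K$ is both unjustified and, even if achieved, insufficient: for $G=\BZ_n$ the obstruction to promoting a cyclic subgroup of $\Aut D^b(S)$ to a categorical action lies in $H^3(\BZ_n,\BC^\ast)\cong\BZ_n\neq 0$, and in general one can only achieve $G'=\BZ_{n^2}$ (Lemma~\ref{lem:obstruction to action}(c)). The paper's proof of (b) therefore needs a genuinely geometric input that your argument lacks: a finite-order symplectic automorphism of the hyperk\"ahler manifold $M$ has a fixed point (holomorphic Lefschetz), the fixed point gives a $\BZ_n$-invariant \emph{simple} object of $D^b(S)$, and the existence of such an object is what kills the $H^3$ obstruction, by \cite[Sec.\ 4.8]{Notes}.
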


The results presented above yield a general framework to determine the fixed loci of any symplectic group action on a moduli space $M$ of stable objects on a symplectic surface $S$.
There are three steps that have to be taken:
\begin{enumerate}
\item[Step 1.] Find the group action on the derived category which induces the action on $M$ (Proposition~\ref{mainprop}).
\item[Step 2.] Determine the equivariant category\footnote{
In the non-cyclic case with respect to a Schur cover of the group}, i.e.\ express it
in terms of derived categories of symplectic surfaces
(Theorem~\ref{mainthm1}).
\item[Step 3.] Apply Theorem~\ref{thm:summary}.
\end{enumerate}
In other words, we have reduced the problem of describing fixed loci of such symplectic actions to determining the equivariant category.
An example where the above process is applied in a non-trivial case can be found in Section~\ref{subsec:order 3 equivalence} below.

\subsection{Related work}
Kamenova, Mongardi, and Oblomkov determined in \cite{KMO} the fixed loci of symplectic involutions
of holomorphic symplectic varieties of K3$^{[n]}$-type. Their argument proceeds by deforming to an involution of the Hilbert scheme of points of a K3 surface
which is induced by an involution on the surface.
For these actions a description of the fixed locus can be obtained by a local analysis near the fixed points. %is immediate.
Our work here grew out of the desire to also describe fixed loci of more general (e.g.\ non-natural) automorphisms.

By work of Huybrechts \cite{HuybrechtsConway} and
Gaberdiel, Hohenegger, and Volpato \cite{GHV}
there is a bijection between finite groups of symplectic auto-equivalences of a K3 surface fixing a stability condition
and subgroups of the Conway group with invariant lattice of rank at least four.
The bijection generalizes classical work of Mukai \cite{Mukai} relating symplectic automorphism groups of a K3 surface with subgroups of the Mathieu group.
Similar results for abelian surfaces have been obtained by Volpato \cite{Vol}.
In particular, the derived Torelli theorem in \cite[Prop.\ 1.4]{HuybrechtsConway} provides a large reservoir of symplectic group actions on the derived category,
and thus a good testing ground for our ideas. We refer to Section~\ref{sec:examples} for a series of examples.
The auto-equivalences obtained in this way are described lattice-theoretically, but a concrete geometric description is often missing.
By a criterion of Huybrechts \cite{HuybrechtsConway} and Mongardi \cite{Mongardi2} some of these auto-equivalences induce 
an action on a moduli space of stable objects, but not all of them do (it is still an open question whether that criterion is sharp).

Group actions on the derived category also play an important role in the string theory of K3 surfaces.
In physics the pair $(S,\sigma)$ of a symplectic surface and a distinguished stability condition corresponds to
a \emph{non-singular sigma model} on $S$. Symplectic $\sigma$-preserving actions on the derived category %preserving $\sigma$ 
correspond to supersymmetry-preserving discrete symmetries.
The equivariant categories are the orbifold sigma models.
Based partially on counting BPS states/dyons,
string theory predicts that the orbifold models should be again either K3 or torus (i.e.\ abelian surface) models \cite{PV}.
The relationship between auto-equivalences and the Conway group cited above
provides the key link between BPS counting in equivariant sigma models and moonshine phenomena for the Conway group, see \cite{NoMoreWalls} and \cite{GHV}
for an introduction on the physical and mathematical side respectively.

\subsection{Open questions}
The equivariant categories $D^b(S)_G$ we have considered above are $2$-Calabi--Yau categories.
Moduli spaces of stable objects in them are holomorphic-symplectic varieties of yet unknown type,
and hence 
provide potentially new examples of (irreducible) holomorphic symplectic varieties.
The most pressing question is therefore the following:

\begin{question} \label{main open question}
Is the set of derived categories of (twisted) coherent sheaves on K3 and abelian surfaces closed under the operation of taking equivariant categories with respect to finite group actions satisfying (i)-(iii)?
\end{question}

In this set we should also include deformations of these categories in the sense of \cite{StabinFamilies} such as the Kuznetsov category of a cubic fourfold.
All evidence so far (as well as the expectation of physics) points to a positive answer.
The parallel question in dimension $1$ has an affirmative answer, see \cite[Sec.\ 7]{Notes}. 
%We make two comments:\\
%(1) Equivariant categories can be taken successively, i.e.\ if $H \subset G$ is a normal subgroup,
%then there is an equivalence $\CD_G \cong (\CD_{H})_{G/H}$.
%Hence it is enough to consider simple groups.\\
%%When we restrict ourselves to cyclic group actions, the number of cases one needs to consider
%%is small enough that one may hope that a case-by-case analysis can yield a full answer.
%%The case of general groups however appears quite difficult without new ideas.\\

\subsection{Plan of the paper}
The paper consists of two parts.
The first part can be read independently and deals with the construction of moduli spaces of objects in the equivariant category.
Section~\ref{sec:equivariant cat} recalls basic properties of equivariant categories.
In Section~\ref{sec:moduli_spaces} we consider the relation between fixed stacks and the equivariant category and prove Theorem~\ref{thm:summary}. 

For the proof we first use Orlov's result on Fourier--Mukai functors \cite{OrlovsThm} to construct a $G$-action
on Lieblich's stack $\mathfrak{M}$ of universally gluable objects in $D^b(X)$ (Section~\ref{subsec:moduli_of_equ}). 
The associated fixed stack $\mathfrak{M}^G$ defined in the categorical sense of Romagny
is precisely the stack of objects in the equivariant category $D^b(X)_G$ (Proposition~\ref{prop_fixed_stack}).
By transferring geometric properties from $\mathfrak{M}$ to its fixed stack
this yields a well-behaved moduli theory for objects in the equivariant category (Section \ref{subsec:ArtinZhang}).
Theorem~\ref{thm:summary} follows then simply by comparing the fixed stack of a $\BG_m$-gerbe with the fixed locus of the underlying coarse moduli space.

The second part concerns equivariant categories of symplectic surfaces.
In Section~\ref{sec:more} we first discuss Serre functors of equivariant categories
and define equivariant Fourier--Mukai transforms.
In Section~\ref{sec:symplectic surfaces} we prove Theorem~\ref{mainthm1} (including its more general form)
and Theorem~\ref{mainthm4}.
In Section~\ref{sec:symplectic surfaces2} we show that in good cases the induced stability condition
lies again in the distinguished component and prove Proposition~\ref{mainprop}.
In Section~\ref{sec:examples} we discuss a series of examples illustrating the general theory.

In Appendix~\ref{sec:app_hearts} we prove that for every distinguished stability condition on a K3 surface
after a %$\widetilde{\mathrm{GL}^{+}}(2,\BR)$-
shift the heart generates the derived category.
In Appendix~\ref{subsec:DTtheory}
we prove a formula for the topological Euler characteristic of the fixed locus of moduli spaces of stable objects on K3 surfaces under cyclic groups actions.

\subsection{Conventions}
We always work over $\BC$. A variety is connected unless specified otherwise.
All functors are derived unless mentioned otherwise.
The $K$-group $K(\CD)$ of a triangulated category $\CD$ with finite-dimensional Hom-spaces
is always taken numerically, i.e.\ modulo the ideal generated by the kernel of the Euler pairing.
Given a smooth projective variety $X$ we let $D^b(X) = D^b (\Coh(X))$ denote the bounded derived category of coherent sheaves on $X$.
If $\pi \colon X \to T$ is a smooth projective morphism with geometrically connected fibers to a $\BC$-scheme $T$,
then $D(X)$ or $D(X/T)$ will stand for the full triangulated subcategory of $T$-perfect complexes of the unbounded derived category of $\CO_X$-modules.
We refer to Sections 2 and 8.1 of \cite{StabinFamilies} for definitions and further references.
If $T = \Spec (\BC)$, then $D(X)$ is the bounded derived category of coherent sheaves as before.

\subsection{Acknowledgements}
We thank Daniel Huybrechts for many discussions on derived categories and K3 surfaces and suggestions on a preliminary version. We also thank Jochen Heinloth for useful comments.
We drew a lot of inspiration for the paper from the string theory of K3 non-linear sigma models.
We thank Albrecht Klemm, Roberto Volpato, and Max Zimet for fruitful discussions and patiently answering our questions.

T.B. was funded by the IMPRS program of the Max-Planck Gesellschaft. G.O. was funded by the Deutsche Forschungsgemeinschaft (DFG) -- OB 512/1-1.

{\large{\part{{Moduli spaces for the equivariant category}}}}

\section{Equivariant categories}
\label{sec:equivariant cat}

\subsection{Categorical actions}
\label{sec:equiv_cat}
An action $(\rho, \theta)$ of a finite group $G$ on an additive $\BC$-linear category $\CD$ consists of
\begin{itemize}
\item for every $g \in G$ an auto-equivalence $\rho_g \colon \CD \to \CD$,
\item for every pair $g,h \in G$ an isomorphism of functors $\theta_{g,h} \colon \rho_{g} \circ \rho_h \to \rho_{gh}$
\end{itemize}
such that for all $g,h,k \in G$ the following diagram commutes
\begin{equation} \label{associativity}
\begin{tikzcd}
\rho_{g} \rho_{h} \rho_{k} \ar{r}{\rho_g  \theta_{h,k}} \ar{d}{\theta_{g,h} \rho_k} & \rho_{g} \rho_{hk} \ar{d}{\theta_{g,hk}} \\
\rho_{gh} \rho_{k} \ar{r}{\theta_{gh,k}} & \rho_{ghk}.
\end{tikzcd}
\end{equation}

A $G$-functor $(f,\sigma) \colon (\CD, \rho, \theta) \to (\CD', \rho', \theta')$ between categories with $G$-actions
is a pair of a functor $f \colon \CD \to \CD'$ together with 2-isomorphisms
$\sigma_g \colon f \circ \rho_g \to \rho_g' \circ f$
such that $(f,\sigma)$ intertwines the associativity relations on both sides, i.e.\ such that the following diagram commutes:
\[
\begin{tikzcd}
f \rho_g \rho_h \ar{d}{f \theta_{g,h}} \ar{r}{\sigma_g \rho_h} & \rho_{g}' f \rho_h \ar{r}{\rho_g' \sigma_h} & \rho_g' \rho_h' f \ar{d}{\theta_{g,h}' f} \\
f \rho_{gh} \ar{rr}{\sigma_{gh}} & & \rho_{gh}' f.
\end{tikzcd}
\]
A $2$-morphism of $G$-functors $(f,\sigma) \to (\tilde{f}, \tilde{\sigma})$ is a $2$-morphism $t\colon f \to f'$ that intertwines the $\sigma_g$,
i.e.\ $\tilde{\sigma}_g \circ t \rho_g = \rho_g' t \circ \sigma_g$.

\begin{defn} \label{defn_equiv_cat}
%Let $(\rho, \theta)$ be a $G$-action on $\CD$.
Given a $G$-action $(\rho, \theta)$ on the category $\CD$
the equivariant category $\CD_{G}$ is defined as follows:
\begin{itemize}
\item Objects of $\CD_G$ are pairs $(E,\phi)$ where $E$ is an object in $\CD$ and $\phi = (\phi_g \colon E \to \rho_{g} E)_{g \in G}$ is a family of isomorphisms
such that
\begin{equation} 
\label{compatibility}
\begin{tikzcd}
E \ar[bend right]{rrr}{\phi_{gh}} \ar{r}{\phi_g} & \rho_{g}E \ar{r}{\rho_g \phi_h} &  \rho_{g} \rho_{h} E \ar{r}{\theta_{g,h}^E} & \rho_{gh} E
\end{tikzcd}
\end{equation} 
commutes for all $g,h \in G$.
\item A morphism from $(E,\phi)$ to $(E', \phi')$ is a morphism $f \colon E \to E'$ in $\CD$
which commutes with linearizations, i.e.\ such that
\[
\begin{tikzcd}
E \ar{r}{f} \ar{d}{\phi_g} & E' \ar{d}{\phi'_g} \\
g E \ar{r}{\rho_gf} & g E'
\end{tikzcd}
\]
commutes for every $g \in G$.
\end{itemize}
\end{defn}

For all objects $(E,\phi)$ and $(E',\phi')$ in $\CD_G$
the group $G$ acts on
$\Hom_{\CD}(E,E')$ via 
%$f \colon E \to E'$ to $(\phi'_g)^{-1} \circ \rho_g(f) \circ \phi_g$.
$f \mapsto (\phi'_g)^{-1} \circ \rho_g(f) \circ \phi_g$.
By definition,
\[ \Hom_{\CD_G}( (E,\phi), (E,\phi') ) = \Hom_{\CD}(E,E')^G. \]

The equivariant category comes equipped with a forgetful functor
\[ p \colon \CD_G \to \CD, \quad (E,\psi) \mapsto E \]
and a linearization functor
\begin{equation} q\colon \CD \to \CD_G, \quad E \mapsto \left( \oplus_{g \in G} \rho_g E, \phi \right) \label{linearlization_functor} \end{equation}
%where the linearization map $\phi_h, h \in G$ is defined by considering
%\[ \theta_{h,h^{-1}g}^{-1} \colon \rho_g E \to \rho_{h} \rho_{h^{-1} g} E \]
where the linearization $\phi$ is given by considering
$\theta_{h,h^{-1}g}^{-1} \colon \rho_g E \to \rho_{h} \rho_{h^{-1} g} E$
and then taking the direct sum over all $g$,
\begin{equation} \phi_h = \oplus_g \theta_{h,h^{-1}g}^{-1} \colon \oplus_{g} \rho_g E \to \rho_h \left( \oplus_{g} \rho_{h^{-1} g} E \right) = \rho_h \left( \oplus_{g} \rho_g E \right). \label{can_lin} \end{equation}
By \cite[Lem.\ 3.8]{Elagin}, $p$ is both left and right adjoint to $q$.

We discuss several properties of equivariant categories.
We will often write $g$ for $\rho_g$.

\begin{example}
The \emph{trivial} $G$-action on $\CD$ is defined by $\rho_g = \id$ and $\theta_{g,h}=\id$ for all $g,h \in G$.
In this case the objects of $\CD_G$ are pairs of an object $x \in \CD$ and a homomorphism $\phi \colon G \to \Aut(x)$.
\end{example}

\begin{rmk}
\label{rmk:GroupAction2Rep}
Consider the $2$-category $\GCats$ whose objects are categories with a $G$-action and whose morphisms are $G$-functors.
The equivariant category $\CD_G$
satisfies the universal property
that for all categories $\CA$ we have the equivalence
\[
\Hom_{\Cats}{(\CA, \CD_G)} \cong \Hom_{\GCats}{(\iota(\CA),\CD)}
\]
where $\iota(\CA)$ is the category $\CA$ endowed with the trivial $G$-action. 
Hence, any $G$-functor from $\iota(\CA)$ to $\CD$ factors over the forgetful functor $p$, see \cite[Prop.\ 4.4]{GK} for more details.
\end{rmk}

If a triangulated category has a dg-enhancement,
then the equivariant category is again triangulated \cite[Cor.\ 6.10]{Elagin}.
This is implied also more directly as follows.

\begin{prop} \label{prop_dgenhancement}
Let $\CD$ be a triangulated category with an action of a group $G$. %and let $G$ be an action on $\CD$.
Suppose there is a full abelian subcategory $\CA \subset \CD$ such that $D^b(\CA) = \CD$ and $G$ preserves $\CA$, i.e.\ $\rho_g E \in \CA$ for all $E \in \CA$.
Then the following holds.
\begin{enumerate}
\item[(i)] There exist a dg-enhancement $\CD_{dg}$ of $\CD$ together with an action of $G$ on $\CD_{dg}$ which lifts the action of $G$ on $\CD$.
\item[(ii)] The equivariant category $\CD_G$ is triangulated.
\end{enumerate}
\end{prop}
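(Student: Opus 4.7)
The plan is to prove (i) by constructing a dg-enhancement explicitly from the heart $\CA$, and then to deduce (ii) by invoking the cited result of Elagin. Since (ii) reduces almost formally to (i), the substantive work lies in producing a $G$-action on the dg-enhancement that lifts the given action $(\rho,\theta)$ on $\CD$.

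For (i), I would take as $\CD_{dg}$ the Drinfeld dg-quotient of the dg-category of bounded complexes in $\CA$ by its full dg-subcategory of acyclic complexes. The homotopy category of this dg-quotient is equivalent to $D^b(\CA) = \CD$, so this is a dg-enhancement of $\CD$. Since each $\rho_g$ is a triangulated auto-equivalence of $\CD$ that by hypothesis sends $\CA$ into itself, its restriction $\rho_g|_\CA$ is an exact auto-equivalence of abelian categories. Term-wise application of $\rho_g|_\CA$ defines a strict dg-auto-equivalence of the dg-category of bounded complexes, and since it preserves acyclic complexes, it descends to a dg-auto-equivalence $\rho_{g,dg}$ of $\CD_{dg}$. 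The natural isomorphism $\theta_{g,h}|_\CA$ applied term-wise gives a dg-natural isomorphism $\theta_{g,h,dg} \colon \rho_{g,dg} \circ \rho_{h,dg} \to \rho_{gh,dg}$ satisfying the associativity diagram \eqref{associativity}, simply because the corresponding relations hold object-wise in $\CA$. This produces a $G$-action on $\CD_{dg}$ whose induced action on the homotopy category recovers the original $(\rho,\theta)$ on $\CD$.

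Part (ii) then follows from \cite[Cor.\ 6.10]{Elagin}, according to which whenever a triangulated category with a $G$-action admits a dg-enhancement equipped with a compatible lifted $G$-action, the equivariant category is again triangulated. Concretely, the triangles come by first forming the equivariant dg-category $(\CD_{dg})_G$ whose objects are pairs $(E,\phi)$ with $E$ an object of $\CD_{dg}$ and $\phi$ a compatible family of closed, degree-zero dg-linearizations in the sense of Definition~\ref{defn_equiv_cat}, and then passing to its homotopy category, which is readily checked to coincide with $\CD_G$.

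The main obstacle I expect is essentially bookkeeping: one must verify that the dg-action on the dg-category of complexes, together with the dg-natural isomorphisms $\theta_{g,h,dg}$ and the associativity relations, descend properly to the Drinfeld dg-quotient. This amounts to invoking functoriality of Drinfeld's construction together with exactness of $\rho_g|_\CA$, both of which are standard. What makes the proof go through cleanly is that the hypothesis of $G$ preserving the heart $\CA$ lets us bypass the usual strictification subtleties for dg-enhancements, since the term-wise construction is automatically strict at the level of complexes in $\CA$ before descent.
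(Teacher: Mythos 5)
Your proof of part (i) is essentially identical to the paper's: both take the dg-quotient $C_{dg}(\CA)/\mathrm{Acyclic}_{dg}(\CA)$ as the enhancement and observe that the induced $G$-action on $\CA$, hence on bounded complexes, preserves acyclics and so descends. For part (ii), however, you take a genuinely different route. You feed the enhancement from (i) into \cite[Cor.\ 6.10]{Elagin}, forming the equivariant dg-category and passing to its homotopy category. The paper instead applies Chen's separability result (see also \cite[Thm.\ 7.1]{Elagin}) to obtain the identification $\CD_G = D^b(\CA)_G \cong D^b(\CA_G)$, so that $\CD_G$ is triangulated simply because it is the bounded derived category of the abelian category $\CA_G$ — note that the paper explicitly advertises this as a ``more direct'' alternative to the Cor.\ 6.10 route you chose. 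Both arguments are valid. Yours has the virtue of reusing the dg-lift you just constructed, but be aware that the step where you claim the homotopy category of $(\CD_{dg})_G$ is ``readily checked'' to coincide with $\CD_G$ hides the real content of Elagin's corollary (lifting homotopy-level linearizations to closed degree-zero ones uses finiteness of $G$ and characteristic zero), so you should lean on the citation rather than on that phrase. The paper's approach buys something extra that is used later: the concrete description of $\CD_G$ as the derived category of the equivariant heart, which does not fall out of your argument.
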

\begin{proof}
By \cite[Sec.\ 1.2]{CS} the dg-quotient category
\[ D_{dg}(\CA) = C_{dg}(\CA) / \mathrm{Acyclic}_{dg}(\CA) \]
of the dg-category of bounded complexes in $\CA$ by the dg-category of acyclic bounded complexes in $A$
defines a dg-enhancement of $D^b(\CA)$.
By hypothesis $D^b(\CA) \cong \CD$ hence $D_{dg}(\CA)$ is a dg-enhancement.
Moreover, 
the $G$-action on $\CD$ induces a $G$-action on $\CA$.
Since $G$ preserves acyclic complexes we obtain a $G$-action on $D_{dg}(\CA)$ with the desired properties.
This proves the first part. 
For the second part we apply \cite{Chen}, see also \cite[Thm.\ 7.1]{Elagin}, to get
\[ \CD_G = D^b(\CA)_G \cong D^b(\CA_G) \]
and as a derived category the latter is naturally triangulated.
\end{proof}

\begin{rmk}
If $X$ is a smooth projective variety, then $D^b(X)$ 
%is idempotent complete \cite[Prop.\ 2.1.1]{BondalVandenBergh} and 
has (up to equivalence) a unique dg-enhancement \cite{LO}.
\end{rmk}

The group of characters
$G^{\vee} = \{ \chi \colon G \to \BC^{\ast} \mid \chi \text{ homomorphism} \}$
acts on the equivariant category $\CD_G$ by the identity on morphisms and by
\[ \chi \cdot (E,\phi) = (E, \chi \phi) \]
on objects, where we let $\chi \phi$ denote the linearization $\chi(g) \phi_g \colon E \to \rho_{g} E$.

An object $E \in \CD$ is called \emph{$G$-invariant} if for all $g \in G$
there exists an isomorphism $\rho_g E \cong E$.
A $G$-linearization of $E$ is an element $\tilde{E} \in \CD_G$ such that $p \tilde{E} \cong E$.
There is the following obstruction for a $G$-invariant simple object to be $G$-linearizable
(which, since $H^2(\BZ_n, \BC^{\ast})=0$ for all $n$, is trivial for cyclic groups).
\begin{lemma}[{\cite[Lem.\ 1]{Ploog}}] \label{lem:Ploog} 
Given a $G$-invariant simple object $E \in \CD$, there exists a class in $H^2(G, \BC^{\ast})$
which vanishes if and only if there exists a $G$-linearization of $E$.
The set of (isomorphism classes) of $G$-linearizations of $E$ is a
torsor under $G^{\vee}$.
\end{lemma}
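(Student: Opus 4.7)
The plan is to extract from the $G$-invariance of $E$ a $2$-cocycle on $G$ valued in $\BC^\ast$ whose class in $H^2(G,\BC^\ast)$ obstructs the existence of a linearization, and then to identify the set of linearizations (when nonempty) as a torsor under $G^\vee$.

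First I would use $G$-invariance to pick, for each $g \in G$, an isomorphism $\phi_g \colon E \to \rho_g E$, normalizing $\phi_1 = \id_E$. For each pair $g,h \in G$ the two isomorphisms $\theta^E_{g,h} \circ \rho_g(\phi_h) \circ \phi_g$ and $\phi_{gh}$ from $E$ to $\rho_{gh}E$ differ by a unique automorphism of $E$. Since $E$ is simple, $\End(E) = \BC \cdot \id_E$, and this difference is a scalar $\alpha(g,h) \in \BC^\ast$ determined by
\[ \theta^E_{g,h} \circ \rho_g(\phi_h) \circ \phi_g = \alpha(g,h)\, \phi_{gh}. \]
The associativity diagram \eqref{associativity} evaluated at $E$, combined with the defining equation of $\alpha$ applied twice, yields the $2$-cocycle identity $\alpha(g,h)\,\alpha(gh,k) = \alpha(g,hk)\,\alpha(h,k)$. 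Rescaling $\phi_g \mapsto c_g \phi_g$ (with $c_g \in \BC^\ast$ and $c_1 = 1$) modifies $\alpha$ by the coboundary $(g,h) \mapsto c_g c_h c_{gh}^{-1}$. Thus we obtain a well-defined class $[\alpha] \in H^2(G,\BC^\ast)$, which vanishes if and only if the $\phi_g$ can be rescaled to satisfy $\alpha \equiv 1$, i.e., to satisfy \eqref{compatibility}, which is exactly the data of a $G$-linearization of $E$.

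For the torsor claim, suppose $\phi$ and $\phi'$ are two $G$-linearizations of $E$. Each $\phi'_g \circ \phi_g^{-1}$ is an automorphism of $\rho_g E$, hence by simplicity a scalar $\chi(g) \in \BC^\ast$; substituting $\phi' = \chi\phi$ into \eqref{compatibility} and comparing with the same identity for $\phi$ forces $\chi$ to be a homomorphism, so $\chi \in G^\vee$. Conversely, the $G^\vee$-action $(E,\phi) \mapsto (E,\chi\phi)$ recalled in the text produces every linearization from any fixed one, so $G^\vee$ acts transitively. Simplicity of $E$ shows the action is also free on isomorphism classes in $\CD_G$: any intertwiner $f \colon (E,\phi) \to (E,\phi')$ is a scalar $f = \lambda \id_E$, and then $\rho_g(f) = \lambda \id_{\rho_g E}$ forces $\phi'_g = \phi_g$.

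I do not expect a serious technical obstacle; the proof is a bookkeeping exercise with the two defining diagrams \eqref{associativity} and \eqref{compatibility}. The one point worth care is tracking the order of composition carefully enough that the scalars $\alpha(g,h)$ satisfy the standard $2$-cocycle identity rather than its inverse, but this is resolved by reading \eqref{compatibility} in the left-to-right direction used above.
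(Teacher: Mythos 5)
Your proof is correct and is exactly the standard Schur-multiplier argument of the cited reference; the paper itself does not reprove the lemma but simply quotes Ploog, whose Lemma~1 proceeds by the same cocycle bookkeeping you describe (the only cosmetic point being that $\phi_1$ should be taken to be the canonical isomorphism $E \to \rho_1 E$ rather than literally $\id_E$, and that the $2$-cocycle identity uses the naturality of $\theta_{g,h}$ applied to $\phi_k$).
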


Example~\ref{example:no G-lin} below shows that this obstruction is effective.

Recall that an extension of groups $1 \to K \to E \to G \to 1$ is \emph{stem} if $K$ is contained both in the commutator subgroup and the center of $E$. Any maximal stem extension $\tilde{G} \twoheadrightarrow G$ is called a Schur covering group of $G$. It has the property that the restriction morphism
\[
H^2(G,\BC^\ast) \to H^2(\tilde{G},\BC^\ast)
\]
vanishes. Hence, by Lemma~\ref{lem:Ploog} if we let $\tilde{G}$ act on $\CD$ via the quotient map to $G$,
then every invariant simple object admits a $\tilde{G}$-linearization.
%By passing to the \textit{Schur covering group} $\tilde{G}$ of $G$, one can force the obstruction to become trivial. 

Let $\Aut \CD$ be the group of equivalences of $\CD$. %$\CD \to \CD$.
%Thus two equivalences $f_1, f_2 \colon \CD \to \CD$ are identified in $\Aut \CD$ if and only if there exists an isomorphism of functors $f_1 \xrightarrow{\cong} f_2$.
Every group action on $\CD$ yields a subgroup of $\Aut \CD$.
For the converse one has the following obstruction
(which because of $H^3(\BZ_n , \BC^{\ast}) = \BZ_n$ is non-trivial even for cyclic groups).

\begin{lemma} \textup{(}\cite[Sec.\ 2.2]{Notes}\textup{)}  \label{lem:obstruction to action}
Assume that $\Hom(\id_{\CD}, \id_{\CD}) = \BC \id$ and let $G \subset \Aut \CD$ be a finite subgroup.
\begin{enumerate}
\item[(a)] There exists a class in $H^3(G, \BC^{\ast})$ which vanishes if and only if there exists an action of $G$ on $\CD$
whose image in $\Aut \CD$ is $G$.
Moreover, the set of isomorphism classes of such actions is a torsor under $H^2(G,\BC^{\ast})$.
\item[(b)] There exits a finite group $G'$ and a surjection $G' \to G$ such that
$G'$ acts on $\CD$ and the induced map $G' \to \Aut \CD$ is the given quotient map to $G$.
\item[(c)] If $G = \BZ_n$, then we can take $\BZ_{n^2} \to \BZ_n$ in (b).
\end{enumerate}
\end{lemma}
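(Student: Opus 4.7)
The plan is classical obstruction theory, reduced to ordinary group cohomology via the scalar-automorphism principle: since $\Hom(\id_{\CD}, \id_{\CD}) = \BC\id$ and each $\rho_g$ is an equivalence, every natural automorphism of a composition $\rho_{g_1} \circ \cdots \circ \rho_{g_k}$ is a scalar in $\BC^{\ast}$. Every obstruction encountered therefore lives in $H^{\bullet}(G, \BC^{\ast})$, and the three parts become statements about $H^3$, $H^2$, and an explicit computation for cyclic groups, respectively.

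For (a), first choose for each $g \in G$ a representative auto-equivalence $\rho_g$ (with $\rho_1 = \id$) and, for every pair $g,h$, an isomorphism $\theta_{g,h} \colon \rho_g \rho_h \xrightarrow{\sim} \rho_{gh}$; such $\theta_{g,h}$ exist because $G \subset \Aut \CD$. The two sides of the associativity diagram \eqref{associativity} are natural isomorphisms $\rho_g \rho_h \rho_k \to \rho_{ghk}$, hence differ by a scalar $c(g,h,k) \in \BC^{\ast}$. Comparing the two reductions of $\rho_g \rho_h \rho_k \rho_\ell \to \rho_{ghk\ell}$ yields the $3$-cocycle identity for $c$, and rescaling $\theta_{g,h}$ by a $2$-cochain $b(g,h)$ multiplies $c$ by the coboundary $db$. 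Thus $[c] \in H^3(G, \BC^{\ast})$ is intrinsic and vanishes precisely when some choice of $\theta_{g,h}$ makes \eqref{associativity} commute, giving a $G$-action. The torsor statement is parallel: once $(\rho_g)$ is fixed, two structure maps $\theta, \theta'$ determine $b = \theta'/\theta$ satisfying $db = 1$, i.e.\ $b \in Z^2(G, \BC^{\ast})$, and $2$-isomorphisms of actions identify $b$'s differing by coboundaries.

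For (b), the plan is to produce a finite surjection $q \colon G' \twoheadrightarrow G$ with $q^{\ast}[c] = 0$ in $H^3(G', \BC^{\ast})$; applying (a) to the pulled-back equivalences $(\rho_{q(g')})$ then produces the desired $G'$-action, and by construction the induced map $G' \to \Aut \CD$ is the given composition $G' \twoheadrightarrow G \subset \Aut \CD$. The key input is that $H^3(G, \BC^{\ast})$ is finite for finite $G$ (via the identification $H^3(G, \BC^{\ast}) \cong H^4(G, \BZ)$ coming from $0 \to \BZ \to \BC \to \BC^{\ast} \to 0$ and the vanishing of $H^{>0}(G, \BC)$), so $[c]$ has finite order and admits a representative $c \colon G^3 \to \mu_N \subset \BC^{\ast}$. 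I expect this to be the main obstacle, since in contrast to the $H^2$-setting, where Schur covers provide a canonical answer, no natural choice of $G'$ exists for $H^3$; concretely one realises $c$ as a crossed $2$-fold extension of $G$ by $\mu_N$ and extracts a finite total group $G'$, following \cite[Sec.~2.2]{Notes}. Part (c) is then an explicit computation: $H^3(\BZ_n, \BC^{\ast}) \cong H^4(\BZ_n, \BZ) \cong \BZ_n$, and identifying $H^2(\BZ_m, \BZ) \cong \Hom(\BZ_m, \QQ/\BZ)$ shows that inflation along $\BZ_{n^2} \twoheadrightarrow \BZ_n$ acts as multiplication by $n$ on $H^2$, hence as multiplication by $n^2$ on $H^4$, which vanishes in $\BZ_{n^2}$. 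Thus $q^{\ast}[c] = 0$ automatically, and $G' = \BZ_{n^2}$ suffices.
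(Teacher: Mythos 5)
The paper itself offers no argument for this lemma---it is quoted verbatim from \cite[Sec.\ 2.2]{Notes}---so there is nothing internal to compare against; I can only judge your proposal on its own terms. Parts (a) and (c) are fine. For (a), the key point you use correctly is that $\Aut(\rho_{ghk})\cong\Aut(\id_{\CD})=\BC^{\ast}$ (conjugation by an equivalence), so the two circuits of \eqref{associativity} differ by a scalar $c(g,h,k)$, the pentagon comparison gives the $3$-cocycle identity, rescaling $\theta$ changes $c$ by a coboundary, and the same bookkeeping one degree down gives the $H^{2}$-torsor structure on actions with fixed underlying $\rho_g$ (after normalizing any other action to have the same underlying functors via a $G$-functor of the form $(\id_{\CD},\sigma)$). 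For (c), the computation $H^{3}(\BZ_n,\BC^{\ast})\cong H^{4}(\BZ_n,\BZ)\cong\BZ_n$ and the fact that inflation along $\BZ_{n^2}\twoheadrightarrow\BZ_n$ sends the degree-$2$ generator to $n$ times the generator, hence annihilates $H^{4}$ because the ring is generated in degree $2$, is correct and complete.

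The genuine gap is in (b), and it sits exactly where you flag it. Reducing (b) to ``find a finite surjection $q\colon G'\twoheadrightarrow G$ with $q^{\ast}[c]=0$ in $H^{3}$'' is the right move, and finiteness of the order of $[c]$ is correctly established; but the existence of such a finite $G'$ \emph{is} the mathematical content of part (b), and your proof of it consists of citing \cite[Sec.\ 2.2]{Notes}, i.e.\ the statement under proof. Saying ``realise $c$ as a crossed $2$-fold extension and extract a finite total group'' is not yet an argument: one must show that a class in $H^{3}(G,\mu_N)$ is represented by a crossed module $\mu_N\to C\to E\to G$ with $E$ \emph{finite}, and then that the class inflates to zero on $E$ (the pulled-back crossed module acquires a section over $E$). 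Alternatively, and more economically: write $G=F/R$ with $F$ free of finite rank; since the free profinite group $\hat F$ has cohomological dimension $1$ and $H^{3}(\hat F,\mu_N)=\operatorname{colim}_{R'}H^{3}(F/R',\mu_N)$ over finite-index normal $R'\subset F$, the class $c$ dies in $H^{3}(F/R'',\mu_N)$ for some finite-index $R''\subset R$, and $G'=F/R''$ works. Either route closes the gap; as written, (b) is asserted rather than proved.
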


\subsection{Stability conditions} \label{subsec:stab_conditions}
A (Bridgeland) stability condition on a triangulated category $\CD$ is a pair $(\CA, Z)$ consisting of
\begin{itemize}
\item the heart $\CA \subset \CD$ of a bounded $t$-structure on $\CD$ and
\item a stability function $Z\colon K(\CA) \to \BC$
\end{itemize}
satisfying 
several conditions, see \cite{BridgelandStab}.
Given an equivalence $\Phi \colon \CD \to \CD'$
of triangulated categories
the image of $\sigma$ under $\Phi$ is defined by
\[ \Phi \sigma = (\Phi \CA, Z \circ \Phi_{\ast}^{-1}) \]
where $\Phi_{\ast} \colon K(\CD) \to K(\CD')$ is the induced map on $K$-groups.
If $\Phi \colon \CD \to \CD$ is an auto-equivalence, we say that 
$\Phi$ preserves (or fixes) $\sigma$ if $\Phi \sigma = \sigma$.

Let $X$ be a smooth projective variety
together with an action of a finite group $G$ on $D^b(X)$ which fixes a stability condition  $\sigma = (\CA,Z)$.
By \cite[Lem.\ 2.16]{MMS} $\sigma$ induces a stability condition on $D^b(X)_G$ defined by
\[ \sigma_G = (\CA_G, Z_G), \quad Z_G \coloneqq Z \circ p_{\ast} \colon K(\CA_G) \to \BC. \]

\begin{lemma} \label{Lemma_stability}
Let $(E,\phi) \in \CA_G$.
Then $(E,\phi)$ is $\sigma_G$-semistable if and only if $E$ is $\sigma$-semistable.
If $E$ is $\sigma$-stable, then $(E,\phi)$ is $\sigma_G$-stable.
\end{lemma}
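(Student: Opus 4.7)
The plan rests on two observations. First, since $Z_G = Z \circ p_\ast$ with $p \colon \CA_G \to \CA$ the forgetful functor, the phase of an object $(E,\phi) \in \CA_G$ under $\sigma_G$ equals the phase of $E \in \CA$ under $\sigma$. Second, $p$ is exact and faithful (it has both adjoints and morphisms in $\CA_G$ are morphisms in $\CA$ satisfying extra constraints), so any proper subobject of $(E,\phi)$ in $\CA_G$ descends to a proper subobject of $E$ in $\CA$. These observations immediately handle the easy implications: if $E$ is $\sigma$-semistable then every subobject $(F,\psi) \hookrightarrow (E,\phi)$ in $\CA_G$ satisfies $\phi_{\sigma_G}(F,\psi) = \phi_\sigma(F) \leq \phi_\sigma(E) = \phi_{\sigma_G}(E,\phi)$, so $(E,\phi)$ is $\sigma_G$-semistable; and the same argument with strict inequality on proper subobjects deduces $\sigma_G$-stability of $(E,\phi)$ from $\sigma$-stability of $E$.

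The only non-trivial direction is that $\sigma_G$-semistability of $(E,\phi)$ forces $\sigma$-semistability of $E$. I would argue contrapositively: suppose $E$ is not $\sigma$-semistable, and let $F \subset E$ denote the maximal destabilizing subobject from its Harder--Narasimhan filtration. Because $\sigma$ is $G$-invariant, each $\rho_g$ carries HN filtrations to HN filtrations, so $\rho_g F \subset \rho_g E$ is the maximal destabilizing subobject of $\rho_g E$. On the other hand, the isomorphism $\phi_g \colon E \to \rho_g E$ transports the HN filtration of $E$ to an HN filtration of $\rho_g E$, which by uniqueness must coincide with the one already on $\rho_g E$. In particular $\phi_g(F) = \rho_g F$ as subobjects of $\rho_g E$, and $\phi_g$ restricts to an isomorphism $\tilde{\phi}_g \colon F \to \rho_g F$.

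It remains to verify that the family $\tilde{\phi} = (\tilde{\phi}_g)$ satisfies the cocycle condition \eqref{compatibility}, so that $(F,\tilde{\phi})$ is a genuine object of $\CA_G$. This is automatic: the required diagram for $\tilde{\phi}$ on $F$ is obtained by restricting the commutative cocycle diagram for $\phi$ on $E$, and commutativity survives the restriction because the inclusions $\rho_{\bullet} F \hookrightarrow \rho_{\bullet} E$ are monic. We therefore obtain a subobject $(F, \tilde{\phi}) \hookrightarrow (E, \phi)$ in $\CA_G$ whose phase under $\sigma_G$ strictly exceeds that of $(E,\phi)$, contradicting $\sigma_G$-semistability. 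The single step requiring genuine care is thus the functoriality of HN filtrations under the $G$-equivariant isomorphisms $\phi_g$, which is where the hypothesis that $\sigma$ is $G$-invariant enters decisively; everything else is formal.
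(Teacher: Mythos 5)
Your proof is correct. The easy implications (semistability, resp.\ stability, of $E$ forces the corresponding property of $(E,\phi)$) are handled exactly as in the paper, by observing that the forgetful functor preserves phases and sends proper subobjects to proper subobjects. For the one non-trivial direction the two arguments diverge. The paper takes an arbitrary destabilizing subobject $F'\subset E$ and uses the adjunction $p\dashv q$: the inclusion corresponds to a morphism $qF'\to (E,\phi)$ in $\CA_G$, and the image of this morphism is an equivariant destabilizer. That route produces a linearized subobject purely formally, at the small cost of checking that the image of $\oplus_g\rho_gF'\to E$ still has phase exceeding $\phi_\sigma(E)$ (it does, being a quotient of a semistable object of that larger phase). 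You instead take $F$ to be the maximal destabilizing subobject and use uniqueness of Harder--Narasimhan filtrations together with $G$-invariance of $\sigma$ to see that $\phi_g(F)=\rho_gF$, so that $\phi$ restricts to a linearization of $F$; the cocycle condition survives restriction along monomorphisms, as you say. Your version is more hands-on but also proves the slightly stronger and independently useful fact that the HN filtration of $E$ lifts canonically to $\CA_G$ --- a fact the authors themselves invoke (without proof, in a footnote) when constructing the $\Theta$-stratification of $(\mathfrak{M}_{\CA,v})^G$ in the proof of Theorem~\ref{thm:existence_gms}. Both arguments are complete; no gap.
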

\begin{proof}
If an element $E \in \CA_G$ is destabilized by $F$, then $p(E)$ is destabilized by $p(F)$.
Conversely, if $p(E)$ is destabilized by $F' \in \CA$, then the image of the adjoint morphism 
%$q(F') = (\oplus_{g} g F', \phi_{\mathrm{can}}) \rightarrow E$ 
$qF' \to E$
destabilizes $E$.
Hence an element in $(E,\phi) \in \CA_G$ is $\sigma_G$-semistable if and only if $E \in \CA$ is $\sigma$-semistable.
A subobject of $(E,\phi)$ is given by a subobject $F \subset E$ such that $\phi$ restricts to a linearization of $F$.
Hence any destabilizing subobject of $(E,\phi)$ yields a destabilizing subobject of $E$. This shows the second claim.
\end{proof}

%As in Definition~\ref{defn_G_sigma_generic}, 
\begin{defn} \label{defn_G_sigma_generic}
A class $v \in K(\CA)^G$ is \emph{$(G,\sigma)$-generic}
if it is primitive and for every splitting $v = v_0 + v_1$ with $v_i\in K(\CA)^G\setminus \BZ v$ the summands have different slopes.
\end{defn}

\begin{lemma} \label{lemma_key}
Let $(E,\phi) \in \CA_G$ such that $E$ is $\sigma$-semistable and its class $[E]\in K(\CA)^G$ is $(G,\sigma)$-generic.
Then $(E,\phi)$ is $\sigma_G$-stable. In particular,
\[ \Hom_{\CA_G}( (E,\phi), (E,\phi) ) = \BC \id. \]
\end{lemma}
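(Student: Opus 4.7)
My plan is to show directly that $(E,\phi)$ has no proper destabilizing subobject in $\CA_G$. Semistability is already Lemma~\ref{Lemma_stability}, so I only need to rule out a proper subobject $(F,\psi) \subsetneq (E,\phi)$ in $\CA_G$ of the same $\sigma_G$-phase. Unpacking the definitions, such a subobject corresponds to a strict inclusion $F \subsetneq E$ in $\CA$ on which $\phi$ restricts to $\psi$. Since $Z_G = Z \circ p_\ast$, the phase equality forces $Z(F)$ to lie on the ray $\BR_{>0}\,Z(E)$, and combined with the $\sigma$-semistability of $E$ this places both $F$ and $E/F$ into the slicing $\CP(\phi_0)$, where $\phi_0$ denotes the common phase.

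Next I would observe that the existence of the linearization $\psi$ forces $[F] \in K(\CA)^G$: each $\psi_g \colon F \to \rho_g F$ is an isomorphism in $\CA$, so $g_\ast [F] = [\rho_g F] = [F]$. Consequently $[E/F] = [E]-[F]$ is also $G$-invariant. Since both $[F]$ and $[E/F]$ share the slope of $[E]$, the $(G,\sigma)$-genericity of $[E]$ (Definition~\ref{defn_G_sigma_generic}) forces $[F] \in \BZ [E]$ or $[E/F] \in \BZ[E]$.

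The main step, and really the only substantive one, is to upgrade this $K$-theoretic identity to an equality of objects. Say $[F] = n[E]$. If both $F$ and $E/F$ are nonzero, then $Z(F) = n Z(E)$ and $Z(E/F) = (1-n) Z(E)$ are both strictly positive real multiples of $e^{i\pi\phi_0}$, forcing $0 < n < 1$, which contradicts $n \in \BZ$. Hence $F = 0$ or $F = E$, contradicting properness. The symmetric case $[E/F] \in \BZ[E]$ is identical, and so $(E,\phi)$ is $\sigma_G$-stable.

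The ``in particular'' assertion is then automatic: a $\sigma_G$-stable object is a simple object of the length-abelian heart $\CP_G(\phi_0)$, so its endomorphism ring is a finite-dimensional division algebra over $\BC$ and therefore equals $\BC\cdot\id$.
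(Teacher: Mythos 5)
Your proof is correct and follows essentially the same route as the paper: rule out a proper subobject of equal $\sigma_G$-phase by applying the forgetful functor, noting that linearizability forces the classes of the sub- and quotient object to be $G$-invariant, and then invoking $(G,\sigma)$-genericity. You are in fact slightly more careful than the paper's own proof, which does not explicitly dispose of the case where one of the summands lies in $\BZ[E]$ (permitted by Definition~\ref{defn_G_sigma_generic}); your central charge argument showing $0<n<1$ is the right way to close that case.
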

\begin{proof}
As explained above the object $(E,\phi)$ is $\sigma_G$-semistable.
If it is not stable, then there exists a short exact sequence in $\CA_G$
\[ 0 \to (F_1,\phi) \to (E,\phi) \to (F_2, \phi) \to 0 \]
with $F_1, F_2$ of the same phase as $E$. Applying the forgetful functor we obtain
\[ 0 \to F_1 \to E \to F_2 \to 0 \]
in $\CA$ with $F_i$ semistable of the same phase as $E$.
However, the classes $[F_i]$ are $G$-invariant which shows that $[E] = [F_1]+[F_2]$ is not $(G,\sigma)$-generic.
\end{proof}

\subsection{Fourier--Mukai actions} \label{Subsection_Geometric_case}
Let $\pi \colon X \to T$ be a smooth projective morphism to a $\BC$-scheme $T$ with geometrically connected fibers.
Let
\[ p,q \colon X \times_T X \to X \]
be the projections to the factors.
The Fourier--Mukai transform $\FM_{\CE} \colon D(X) \to D(X)$
with kernel $\CE \in D(X \times_T X)$ is defined by
\[ \FM_{\CE}(A) = q_{\ast}( p^{\ast}(A) \otimes \CE). \]
Using a push-pull argument we have isomorphisms
\begin{equation} \FM_{\CE}( A \otimes \pi^{\ast} B ) \cong \FM_{\CE}(A) \otimes \pi^{\ast} B \label{tp1} \end{equation} 
for all $A \in D(X)$ and $B \in D(T)$, functorial in both $A$ and $B$.
%(Here $T$ is viewed as a scheme over itself via the identity; the category $D(T)$ is equal to the category of pseudo-coherent complexes
%locally of finite Tor-dimension over the structure sheaf.)

\begin{defn}
A \emph{Fourier--Mukai action} of $G$ on $D(X)$ consists of\footnote{We write $\CE \circ \CF$ to indicate the composition of correspondences $\CE,\CF$.}
\begin{itemize}
\item for every $g \in G$ a Fourier--Mukai kernel $\CE_g \in D(X \times_T X)$,
\item for every pair $g,h \in G$ an isomorphism $\theta_{g,h} \colon \CE_g \circ \CE_h \to \CE_{gh}$
\end{itemize}
such that for all $g,h,k$ the diagram \eqref{associativity} commutes with $\rho_g$ replaced by $\CE_g$.
\end{defn}

For smooth projective varieties we have not defined anything new:

\begin{lemma} \textup{(}\cite[Sec.\ 2.3]{Notes}\textup{)}
\label{lem:FM-action}
Let $X$ be smooth projective variety and let $G$ be a finite group.
Then any $G$-action on $D^b(X)$ is induced by a unique Fourier--Mukai action.
\end{lemma}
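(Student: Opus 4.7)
The plan is to promote the categorical $G$-action on $D^b(X)$ to a kernel-level Fourier--Mukai action by invoking Orlov's representability theorem for each individual auto-equivalence $\rho_g$, and then lifting the associators $\theta_{g,h}$ to morphisms of kernels via the standard correspondence between natural transformations of Fourier--Mukai functors and morphisms of their kernels.

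First, for each $g \in G$ I would apply Orlov's theorem to the equivalence $\rho_g \colon D^b(X) \to D^b(X)$ to produce a kernel $\CE_g \in D^b(X \times X)$ together with a choice of isomorphism of functors $\alpha_g \colon \FM_{\CE_g} \xrightarrow{\sim} \rho_g$. For $g = e$ I would take $\CE_e = \CO_\Delta$ with the canonical $\alpha_e$. The standard isomorphism $\FM_{\CE_g} \circ \FM_{\CE_h} \cong \FM_{\CE_g \circ \CE_h}$ combined with $\alpha_g$, $\alpha_h$, $\theta_{g,h}$, and $\alpha_{gh}^{-1}$ then yields a natural isomorphism
\[
\widetilde{\theta}_{g,h} \colon \FM_{\CE_g \circ \CE_h} \xrightarrow{\sim} \FM_{\CE_{gh}}.
\]

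The key step is to lift each $\widetilde{\theta}_{g,h}$ to an isomorphism of kernels $\theta^{\FM}_{g,h} \colon \CE_g \circ \CE_h \xrightarrow{\sim} \CE_{gh}$. Since the functors involved are equivalences, the natural map
\[
\Hom(\CE_g \circ \CE_h,\, \CE_{gh}) \to \Hom\bigl(\FM_{\CE_g \circ \CE_h},\, \FM_{\CE_{gh}}\bigr)
\]
is a bijection (a refinement of Orlov's theorem in the smooth projective setting), producing a unique such lift. The associativity diagram \eqref{associativity} at the kernel level then follows automatically: both composites around the pentagon map under $\FM$ to the same natural transformation by the assumed associativity of the $\theta_{g,h}$, and the bijection forces equality of the kernel morphisms.

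Uniqueness of the Fourier--Mukai action is again a consequence of the same bijection together with the uniqueness part of Orlov's theorem: any two choices of kernels representing the $\rho_g$ are isomorphic, and these isomorphisms are compatible with the associators because their effect on the functor side is already determined. The main obstacle is exactly this kernel-versus-natural-transformation correspondence: without it one can represent each $\rho_g$ only non-canonically and the coherence of the lifted associators $\theta^{\FM}_{g,h}$ would not be automatic. Everything else is essentially bookkeeping once this technical point is in place.
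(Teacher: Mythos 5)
Your overall strategy --- represent each $\rho_g$ by a kernel via Orlov's theorem and then lift the associators to the kernel level --- is the right one and is essentially the argument behind the cited reference. The problem is the justification of the central step. There is no ``refinement of Orlov's theorem'' asserting that the natural map
\[
\Hom_{D^b(X\times X)}(\CE,\CF)\longrightarrow \Hom\bigl(\FM_{\CE},\FM_{\CF}\bigr)
\]
is a bijection: the assignment $\CE\mapsto\FM_{\CE}$ from kernels to functors is in general neither full nor faithful, even for smooth projective varieties and even between kernels of equivalences (this failure is exactly why the kernel-versus-functor dictionary is delicate). As written, your key step rests on a false general statement, and the uniqueness assertion inherits the same defect.

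The gap is repairable, and the repair is the real content of the lemma. Since $\CE_g\circ\CE_h$ and $\CE_{gh}$ are both kernels of one and the same equivalence, convolution with the inverse kernel identifies $\Hom(\CE_g\circ\CE_h,\CE_{gh})$ with $\End(\CO_{\Delta})=H^0(X,\CO_X)=\BC$ (recall $X$ is connected); likewise $\Hom(\FM_{\CE_g\circ\CE_h},\FM_{\CE_{gh}})\cong\Hom(\id_{D^b(X)},\id_{D^b(X)})=\BC\,\id$. Hence the sets of \emph{isomorphisms} on both sides are $\BC^{\ast}$-torsors, the comparison map is $\BC^{\ast}$-equivariant and carries isomorphisms of kernels to isomorphisms of functors, and is therefore bijective. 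This is what produces the unique lift $\theta^{\FM}_{g,h}$, forces the pentagon \eqref{associativity} at the kernel level by injectivity, and pins down the scalars in the isomorphisms $\CE_g\cong\CE_g'$ needed for uniqueness of the Fourier--Mukai action. You correctly located where the difficulty sits; you just need to replace the phantom citation by this one-dimensionality/torsor argument, which is in effect the $H^2(G,\BC^{\ast})$ and $H^3(G,\BC^{\ast})$ obstruction-theoretic comparison underlying the proof in \cite[Sec.~2.3]{Notes}.
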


Given a Fourier--Mukai action on the derived category of $X/T$
our next goal is to define natural operations on the equivariant category.
If $G$ is induced by an action on $X$, this is discussed in \cite[Sec.\ 4]{BKR}.
Since our $G$-action does not have to preserve the tensor product or the structure sheaf, some care is needed in the general case.

\subsubsection{Pushforward and pullback} \label{subsec:eq-pushforward and pullback}
Consider a fiber product diagram
\begin{equation} \label{diag1}
\begin{tikzcd}
X' \ar{r}{\alpha} \ar{d}{\pi'} & X \ar{d}{\pi} \\
T' \ar{r}{\beta} & T.
\end{tikzcd}
\end{equation}
The pullback of the kernels of the $G$-action on $X$,
\[ (\alpha \times \alpha)^{\ast} \CE_g \in D(X' \times_{T'} X'), \]
together with the pullback of the $\theta_{g,h}$ define a Fourier--Mukai $G$-action on $D(X')$.
We say that the morphism $\alpha$ is \emph{$G$-equivariant}.

Given an equivariant object $(F,\phi)$ in $D(X)_{G}$ we define its pullback by
\[ \alpha^{\ast} (F,\phi) = (\alpha^{\ast} F, \phi') \, \in D(X')_G \]
where the $G$-linearization $\phi'_g$ is the composition
\begin{multline*}
\alpha^{\ast} F \xrightarrow{\alpha^{\ast} \phi_g} \alpha^{\ast}(g E) = \alpha^{\ast} q_{\ast}( p^{\ast}(F) \otimes \CE_g) 
\cong q'_{\ast} (\alpha \times \alpha)^{\ast}( p^{\ast}(F) \otimes \CE_g )  \\
\cong q'_{\ast}( p^{\prime \ast}( \alpha^{\ast} F) \otimes (\alpha \times \alpha)^{\ast} \CE_g ) = g \alpha^{\ast}(F)
\end{multline*}
with $p',q' \colon X' \times_{T'} X' \to X'$ the projections.
The pullback $\alpha^{\ast}$ of an equivariant morphism is the pullback of the morphism in $D(X)$
(one checks that the pullback morphism is $G$-invariant).
Taken together this yields a functor
\[ \alpha^{\ast} \colon D(X)_G \to D(X')_G. \]
Similarly if $\beta$ is proper and flat and $(E,\phi) \in D(X')_G$, we define the pushforward functor by
\[ \alpha_{\ast} (E,\phi) \coloneqq (\alpha_{\ast} E, \phi') \]
where the $G$-linearization $\phi'$ is obtained as the composition
\begin{multline*}
\alpha_{\ast} E \xrightarrow{\alpha_{\ast} \phi_g} \alpha_{\ast} g E
= \alpha_{\ast} q_{\ast}( p^{\ast}(E) \otimes (\alpha \times \alpha)^{\ast}(\CE_g)  )  \\
\cong q'_{\ast} (\alpha \times \alpha)_{\ast} ( p^{\ast}(E) \otimes (\alpha \times \alpha)^{\ast}(\CE_g)  ) 
%& \cong q'_{\ast}( (\alpha \times \alpha)_{\ast} ( p^{\ast}(E)) \otimes \CE_g ) \\
\cong q'_{\ast}( p^{\prime \ast}( \alpha_{\ast} E) \otimes \CE_g ) = g \alpha_{\ast}(E).
\end{multline*}
The pushforward of an equivariant morphism is the pushforward of the underlying morphism.
The pullback functor $\alpha^{\ast}$ is left adjoint to $\alpha_{\ast}$.

\subsubsection{Hom and tensor product} \label{subsubsection_tensor_product}
Given a $T$-perfect object $B \in D(T)$ and an equivariant object $(E,\phi) \in D(X)_G$ 
we define the tensor product by
\[ (E,\phi) \otimes \pi^{\ast} B \coloneqq (\pi^{\ast} B \otimes E, \phi') \]
where the linearization $\phi'$ is the composition
\[
E \otimes \pi^{\ast}(B) \xrightarrow{\phi_g \otimes \id} \FM_{\CE_g}(E) \otimes \pi^{\ast}(B) \overset{\eqref{tp1}}{\cong} \FM_{\CE_g}(E \otimes \pi^{\ast}(B)) = g (E \otimes \pi^{\ast}(B)). \]
More generally, if $D(T)$ is equipped with the trivial $G$-action and $(B,\chi) \in D(T)_G$, we let
\[ (B, \chi) \otimes (E, \phi) \coloneqq (\pi^{\ast} B \otimes E, \chi \phi') \]

Similarly, given two equivariant objects $(E,\phi)$ and $(F,\psi)$ in $D(X)_G$ and
an open subset $U\subset T$
the group $G$ acts on $\Hom_{D(X_U)}(E|_{U},F|_{U})$
by $f \mapsto \phi_g|_U \circ \FM_{\CE_g|_U}(f) \circ \psi_g^{-1}|_{U}$
where we use again that Fourier--Mukai actions induce actions after base change.
Since this action is compatible with restrictions to smaller open subsets
we obtain a $G$-action on %the $\pi$-relative Hom complex
$\hom_{\pi}(E,F) \coloneqq \pi_{\ast} \hom(E,F)$
and thus a bifunctor
\[ \hom_{\pi} \colon D(X)_G \times D(X)_G \to D(T)_{G}. \]
It satisfies the usual adjunctions with respect to the tensor product.

For any (closed or non-closed) point $t \in T$ let $\iota_t \colon X_t \to X$ be the inclusion of the fiber of $X$ over $t$.
Given $(E,\phi) \in D(X)_G$ we write $(E,\phi)_t$ for the equivariant pullback $\iota_t^{\ast} (E,\phi)$.
\begin{lemma} \label{Lemma_locally_constant_euler_char}
Let $(E,\phi),(F,\psi)$ be objects in $D(X)_G$. Then
\[ t \mapsto \chi( (E,\phi)_t, (F,\psi)_t) \coloneqq \sum_{i} \dim \Ext^i_{D(X_t)_G}\left( (E,\phi)_t, (F,\psi)_t \right) \]
is locally constant in $t$.
\end{lemma}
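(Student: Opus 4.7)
The strategy is to realize $t \mapsto \chi\bigl((E,\phi)_t,(F,\psi)_t\bigr)$ as the Euler characteristic of the derived fibers of a $T$-perfect complex on $T$, and then invoke the standard fact that such Euler characteristics are locally constant in $t$.

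First, I would apply the relative $\hom$ construction from Section~\ref{subsubsection_tensor_product} to form
$\CH := \hom_{\pi}\bigl((E,\phi),(F,\psi)\bigr) \in D(T)_G$. Since the $G$-action on $D(T)$ is trivial, this is the same data as the underlying object $\CH = \pi_\ast \hom(E,F) \in D(T)$ together with an honest $G$-action by automorphisms. As $E$ and $F$ are $T$-perfect and $\pi$ is smooth and proper, $\CH$ is $T$-perfect. Next, since we work over $\BC$, the idempotent $e_G = \tfrac{1}{|G|}\sum_{g\in G} g$ splits off a direct summand $\CH^G \subset \CH$ in $D(T)$, which, as a summand of a $T$-perfect complex, is again $T$-perfect.

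The main technical step is to identify the derived fiber of $\CH^G$ at a point $t \in T$ with the equivariant $R\Hom$. Ordinary base change along the Cartesian square defining $\iota_t \colon X_t \hookrightarrow X$ provides
\[ \CH \otimes^{\BL}_{\CO_T} \kk(t) \,\cong\, R\Hom_{D(X_t)}(E_t, F_t). \]
The content to verify is that this isomorphism intertwines the two natural $G$-actions: the action on the left coming from the linearizations $(\phi, \psi)$ on $\CH$, and the action on the right coming from the pulled-back linearizations $(\phi_t, \psi_t)$ on the fiber. This follows by unwinding the definition of the equivariant pullback in Section~\ref{subsec:eq-pushforward and pullback}, together with the fact that the Fourier--Mukai kernels $\CE_g$ restrict along $\iota_t \times \iota_t$ to the kernels of the induced Fourier--Mukai action on $D(X_t)$. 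Applying the exact functor $(-)^G$ to both sides and using $R\Hom_{D(X_t)_G}\bigl((E,\phi)_t,(F,\psi)_t\bigr) = R\Hom_{D(X_t)}(E_t,F_t)^G$ then yields
\[ \CH^G \otimes^{\BL}_{\CO_T} \kk(t) \,\cong\, R\Hom_{D(X_t)_G}\bigl((E,\phi)_t,(F,\psi)_t\bigr). \]

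Since $\CH^G$ is $T$-perfect, its derived fibers have Euler characteristic that is locally constant on $T$; this is the desired conclusion. The only real obstacle is the bookkeeping in the equivariance of base change — verifying that the $G$-action on the fiberwise $R\Hom$ induced from $(\phi_t,\psi_t)$ matches the restriction of the $G$-action on $\CH$ — and this reduces immediately to the functoriality of Fourier--Mukai kernels under flat base change, which was built into the setup of Section~\ref{Subsection_Geometric_case}.
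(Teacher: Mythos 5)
Your proof is correct and follows essentially the same route as the paper: both reduce the claim to the local constancy of the Euler characteristic of the derived fibers of the $T$-perfect complex $\hom_{\pi}\bigl((E,\phi),(F,\psi)\bigr)^G$, with the identification of its fiber at $t$ with $R\Hom_{D(X_t)_G}\bigl((E,\phi)_t,(F,\psi)_t\bigr)$ supplied by the push-pull/base-change argument. The paper states this in two lines; you have merely spelled out the base-change step, the equivariance bookkeeping, and the splitting of the invariant summand via the idempotent $\tfrac{1}{|G|}\sum_{g}g$, all of which are implicit there.
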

\begin{proof}
By a push-pull argument we have that
\[ \chi( (E,\phi)_t, (F,\psi)_t) = \chi( k(t), \hom_{\pi}( (E,\phi), (F,\psi) )^G \otimes k(t) ). \]
Since $\hom_{\pi}( (E,\phi), (F,\psi) )$ is perfect, the same holds for its invariant part which implies the claim.
\begin{comment}
Use the diagram
\[
\begin{tikzcd}
X_t \ar{r} \ar{d}{\pi_t} & X \ar{d} \\
t \ar{r}{\iota} & T
\end{tikzcd}
\]
\end{comment}
\end{proof}

\section{Moduli spaces} \label{sec:moduli_spaces}
\subsection{Group actions on stacks} \label{Subsection_group_action_on_Stacks}
Following \cite{R} an action of a finite group $G$ on a stack $\CM$ over $\BC$ consists of
\begin{itemize}
\item for every $g \in G$ an automorphism of stacks $\rho_g \colon \CM \to \CM$
\item for every pair $g,h \in G$ an isomorphism of functors $\theta_{g,h} \colon \rho_{g} \rho_{h} \to \rho_{gh}$
\end{itemize}
such that for all $g,h,k \in G$ the diagram \eqref{associativity} commutes.
In other words, if we view $\CM$ as a category fibered in groupoids,
then a $G$-action on $\CM$ is precisely a $G$-action on the category $\CM$ in the sense of Section~\ref{sec:equiv_cat}
with the additional assumption that every $\rho_g$ is a morphism of stacks.
A morphism of stacks with $G$-actions (also called a $G$-equivariant morphism) is a $G$-functor $(f,\sigma)$ such that $f$ is a morphism of stacks.
A $2$-morphism of such morphisms is a $2$-morphism of $G$-functors.

\begin{comment}
\begin{rmk}
By the associativity for the triple $(g,1,1)$ the morphism $a \coloneqq \mu_1^{-1} \alpha_{1,1} \colon \mu_1 \to \id$ satisfies
\begin{itemize}
\item $[\mu_g a \colon \mu_g \mu_1 \to \mu_g] = \alpha_{g,1}$
\item $[a \mu_g \colon \mu_1 \mu_g \to \mu_g] = \alpha_{1,g}$.
\end{itemize}
This shows our definition above agrees with \cite[Defn.1.3]{R}, see also \cite[2.2]{Shinder}.
\end{rmk}
\end{comment}
Let $\St$ and $G$-$\St$ denote the 2-categories of stacks and stacks with a $G$-action respectively.
There is a functor 
$\iota \colon \St \to \GSt$
which equips a stack with the trivial $G$-action. % (defined by letting $\rho$ and $\theta$ be the identity).
Let $\Grpds$ be the category of groupoids.

\begin{defn}[{\cite[Def.\ 2.3]{R}}] \label{defn_fixed_stack}
Let $G$ be a finite group acting on a stack $\CM$.
The fixed stack is the functor $\CM^G \colon \St \to \Grpds$
defined by the condition that for all stacks $T$ we have the equivalence
\[ \Hom_{\St}(T, \CM^G) \cong \Hom_{\GSt}(\iota(T), \CM). \]
\end{defn}

Hence there is a $G$-equivariant morphism $\epsilon \colon \iota(\CM^G) \to \CM$
satisfying the following universal property:
For any stack $T$ and for any $G$-equivariant morphism $f \colon \iota(T) \to \CM$ there exists
a unique morphism $\tilde{f} \colon T \to \CM^G$ such that $\epsilon \circ \tilde{f} = f$.

\begin{rmk} \label{rmk_fixed_stack}
As explained in \cite[Proof of Prop.\ 2.5]{R} the objects of $\CM^G$
are pairs $(x, \{ \alpha_g \}_{g \in G})$ of an element $x \in \CM$ and
maps $\alpha_g \colon x \to g.x$ such that $\theta_{g,h}^x \circ g \alpha_h \circ \alpha_g = \alpha_{gh}$ for all $g,h \in G$.
Morphisms are the morphisms in $\CM$ which respect the linearizations.
Hence, viewed as a category, the fixed stack $\CM^G$
\emph{is} the equivariant category $\CM_G$ of the action $(\rho, \theta)$ in the sense of Definition~\ref{defn_equiv_cat}. %associated to $(\rho, \theta)$.
This can be seen also more conceptually: By the universal property of the equivariant category
(Definition~\ref{defn_fixed_stack}) we have a functor $\CM^G \to \CM_G$,
but by the universal property of the fixed stack we also have an inverse.
\end{rmk}

\begin{rmk} \label{rmk:fixed stack base change}
By the universal property, if $(f,\sigma) \colon \CN \to \CM$ is a $G$-equivariant morphism which is a monomorphism
(e.g.\ an open or closed immersion), then we have a fiber diagram
\[
\begin{tikzcd}
\CN^G \ar{r} \ar{d}{\epsilon} & \CM^G \ar{d}{\epsilon} \\
\CN \ar{r}{f} & \CM.
\end{tikzcd}
\]
\end{rmk}

\begin{prop} \label{algebraicity_of_fixed_stack}
{\cite[Thm.\ 3.3, 3.6]{R}}
Let $G$ be a finite group acting on an Artin stack $\CM$ (locally) of finite type over $\BC$.
Then $\CM^G$ is an Artin stack (locally) of finite type over $\BC$ and
the classifying morphism
$\epsilon \colon \CM^G \to \CM$
%(as a morphism of stacks) 
is representable, separated and quasi-compact.
If $\CM$ has affine diagonal, then so does $\CM^G$.

Furthermore, consider any property of morphisms of schemes that is satisfied by closed immersions and is stable under composition.
Then, if the diagonal of $\CM$ has this property, then $\epsilon$ has this property.
\end{prop}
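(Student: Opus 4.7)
The plan is to realize $\CM^G$ explicitly as an iterated fiber product built from $\CM$ and its diagonal, and to transfer the required properties through this construction. By Remark~\ref{rmk_fixed_stack}, a $T$-point of $\CM^G$ is a pair $(x, (\alpha_g)_{g \in G})$ with $x \in \CM(T)$ and isomorphisms $\alpha_g \colon x \to \rho_g(x)$ satisfying the cocycle relations $\alpha_{gh} = \theta^x_{g,h} \circ \rho_g(\alpha_h) \circ \alpha_g$ for all $g,h \in G$.

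For each $g \in G$ I would first introduce the stack of pairs $(x, \alpha_g \colon x \to \rho_g(x))$, defined by the $2$-fiber product
\[ \CI_g \ := \ \CM \times_{\Delta,\, \CM \times \CM,\, (\id, \rho_g)} \CM. \]
By construction the forgetful morphism $\CI_g \to \CM$ is a base change of the diagonal $\Delta_\CM$, so it is representable and it inherits every base-change stable property of $\Delta_\CM$ (in particular separatedness, quasi-compactness, and affineness). Taking the fibered product $\CI := \prod^{\CM}_{g \in G} \CI_g$ yields an algebraic stack parametrizing tuples $(x, (\alpha_g)_g)$ without any cocycle constraints, and the projection $\CI \to \CM$, being an iterated base change and composition of copies of $\Delta_\CM$, retains these properties.

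Next, I would impose the cocycle relations. For every pair $(g,h)$ the two morphisms $\CI \rightrightarrows \CI_{gh}$ sending $(x,(\alpha_\bullet))$ to $(x, \alpha_{gh})$ and to $(x, \theta^x_{g,h} \circ \rho_g(\alpha_h) \circ \alpha_g)$ respectively have an equalizer obtained as the fibered product of $\CI \to \CI_{gh} \times \CI_{gh}$ with the diagonal $\Delta_{\CI_{gh}}$. Since $\Delta_{\CI_{gh}}$ inherits representability from $\Delta_\CM$ (and is a closed immersion whenever $\Delta_\CM$ is one), this equalizer is a closed substack of $\CI$. Intersecting over all pairs $(g,h)$ identifies $\CM^G$ with a closed substack of $\CI$, which establishes algebraicity and local finite type of $\CM^G$. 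The factorization of $\epsilon$ as the closed immersion $\CM^G \hookrightarrow \CI$ followed by $\CI \to \CM$ then delivers representability, separatedness and quasi-compactness, and it transfers any property from $\Delta_\CM$ to $\epsilon$ that is satisfied by closed immersions and stable under composition. Finally, $\Delta_{\CM^G}$ is carved out of $\Delta_\CM$ by the additional closed conditions expressing compatibility with the $\alpha_g$, so affineness of $\Delta_\CM$ forces affineness of $\Delta_{\CM^G}$.

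The main obstacle will be the $2$-categorical bookkeeping: one must check that $(x, (\alpha_\bullet)) \mapsto (x, \theta^x_{g,h} \circ \rho_g(\alpha_h) \circ \alpha_g)$ defines an honest $1$-morphism of stacks, and that each $2$-fibered product appearing above agrees with a representable stack-theoretic fiber product rather than requiring further stackification. This is essentially the content of \cite[Thm.\ 3.3]{R}; once dispatched, the remaining assertions are purely formal transfers of properties along the representable morphism $\CI \to \CM$ and along the closed immersion $\CM^G \hookrightarrow \CI$.
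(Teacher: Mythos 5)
Your proposal is correct in outline, but it is worth separating what the paper proves from what it merely cites. For the algebraicity of $\CM^G$ and the listed properties of $\epsilon$ the paper simply invokes \cite{R}, and your construction --- realizing $\CM^G$ as a closed substack of the iterated fibre product of the $\CI_g$ over $\CM$, each $\CI_g \to \CM$ being a base change of $\Delta_{\CM}$, and reading off the properties of $\epsilon$ from the factorization through a closed immersion followed by compositions of such base changes --- is essentially Romagny's own argument, so here you are re-deriving the cited result rather than diverging from the paper. The one assertion the paper proves itself is the affine-diagonal addendum, and there your route is genuinely different: you identify $\mathrm{Isom}_{\CM^G}((x,\alpha),(y,\beta))$ as the closed subsheaf of $\mathrm{Isom}_{\CM}(x,y)$ cut out by the conditions $\beta_g \circ f = \rho_g(f)\circ \alpha_g$, and conclude since a closed subscheme of an affine $T$-scheme is affine; the paper instead argues formally that $\epsilon$ is affine by the transfer statement, hence $\Delta_{\CM}\circ\epsilon = (\epsilon\times\epsilon)\circ\Delta_{\CM^G}$ is affine, and the cancellation lemma (using that $\epsilon\times\epsilon$ is separated) gives affineness of $\Delta_{\CM^G}$. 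Both arguments work. Two points to tighten in yours: the equalizer imposing the cocycle relation should be formed relative to $\CM$ (using the relative diagonal of the representable morphism $\CI_{gh}\to\CM$, so that agreement of the two sections is a condition rather than extra data), and its closedness is \emph{not} because $\Delta_{\CM}$ is a closed immersion (it essentially never is for a stack with automorphisms) but because $\CI_{gh}\to\CM$ is separated, being a base change of the separated morphism $\Delta_{\CM}$; likewise the closedness of the conditions cutting out $\mathrm{Isom}_{\CM^G}$ inside $\mathrm{Isom}_{\CM}$ uses separatedness of the spaces $\mathrm{Isom}_{\CM}(x,\rho_g y)$, which follows from the standing conventions on the diagonal of an Artin stack (or from its affineness in the case at hand).
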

\begin{proof}
We prove that $\CM^G$ has affine diagonal if $\CM$ has. Everything else can be found in \cite{R}.
Assume that $\CM$ has affine diagonal and consider the commutative diagram
%(the fiber property can be checked after base change to a scheme, or since $\CM^G \to \CM$ is representable so is an iso-fibration, see Bridgeland Intro to motivic Hall algebras, App.)
\[
\begin{tikzcd}
\CM^G \ar{r}{\Delta_{\CM^G}} \ar[swap]{dr}{\Delta_{\CM} \circ \epsilon} & \CM^G \times \CM^G \ar{d}{\epsilon \times \epsilon} \\
& \CM \times \CM.
\end{tikzcd}
\]
Since $\Delta_{\CM}$ is affine, $\epsilon$ is affine by the second part, hence so is the composition $\epsilon \circ \Delta$.
Since $\epsilon \times \epsilon$ is separated, its diagonal is a closed immersion and hence affine. By the cancellation
lemma it follows that $\Delta_{\CM^G}$ is affine.
\end{proof}

If $G$ acts on a separated scheme, then the fixed stack is a closed subscheme
and equal to the fixed locus defined in the usual way.
However, in general the map $\epsilon \colon \CM^G \to \CM$ may behave quite subtle.
For example, taking fixed stacks usually does not commute
with passing to the good or coarse moduli space (if it exists).

\subsection{The fixed stack of a $\BG_m$-gerbe}
Consider a $G$-action $(\rho,\theta)$ on the stack
$B \BG_m$
such that $\rho_g = \id$ for all $g \in G$ but we allow the $2$-isomorphisms $\theta$ to be arbitrary.
According to Lemma~\ref{lem:obstruction to action} there is an associated class 
\[ \alpha(\theta) \in H^2(G, \BC^{\ast}) \]
where we let the trivial action correspond to the trivial class.\footnote{We have stated Lemma~\ref{lem:obstruction to action} only for additive $\BC$-linear category,
but since $\Aut( \id_{B \BG_m} ) = \BC^{\ast} \id$ on which $G$ acts trivially by conjugation, the result applies verbatim also in this case.}
%By Lemma~\ref{lem:Ploog} 
We have \cite{R}
%A direct verification (see also \cite{R}) shows that
\[ (B \BG_m)^G
=
\begin{cases}
\bigsqcup_{\chi \in G^{\vee}} B \BG_m & \text{ if } \alpha(\theta) = 0, \\
\varnothing & \text{ if } \alpha(\theta) \neq 0.
\end{cases}
\]

In this section we consider the following generalization:
Let $M$ be a complete variety, and consider the trivial $\BG_m$-gerbe
\[ \CM = M \times B \BG_m. \]
The projection and the section of the gerbe are denoted by
\[ p_1 \colon \CM \to M, \quad s = (\id_M, t) \colon M \to \CM \]
%Consider also the section $s = \id \times t \colon M \to \CM$
where $t \colon M \to B \BG_m$ corresponds to the trivial line bundle.
We refer to \cite[Def.\ 12.2.2]{Olsson} for a definition of gerbes and morphisms of gerbes.

\begin{lemma} \label{LemmaGerbe}
There is a natural bijection between the set of morphisms of $\BG_m$-gerbes $f \colon \CM \to \CM$
and the set of pairs $(F, \CL)$ where $F \colon M \to M$ is an automorphism and $\CL \in \Pic(M)$.
If the morphism $f$ corresponds to $(F,\CL)$ and $g$ corresponds to $(G,\CL')$, then $f \circ g$ corresponds to
$(F \circ G, \CL \otimes F^{\ast}(\CL'))$.
\end{lemma}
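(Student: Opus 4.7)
The plan is to construct the bijection explicitly using the tautological line bundle on $\CM$, and then verify the composition rule by a direct push--pull computation.

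First, I would extract the automorphism $F$ as follows. Since $M$ is a scheme and $B\BG_m$ has a single $\BC$-point, every morphism $B\BG_m \to M$ is constant, which forces $p_1 \circ f$ to factor as $F \circ p_1$ for a unique morphism $F \colon M \to M$. Because any morphism of $\BG_m$-gerbes is automatically an equivalence, $F$ must be an automorphism. To extract $\CL$, let $\CU \in \Pic(\CM)$ denote the tautological line bundle pulled back from the universal bundle on $B\BG_m$; equivalently, $\CU$ is characterised (up to pullback from $M$) by the fact that the inertial $\BG_m$ acts on it with weight one. Since $f$ preserves the $\BG_m$-banding, $f^\ast \CU$ has weight one as well, so $f^\ast \CU \otimes \CU^{-1}$ has weight zero and descends to a unique $\CL \in \Pic(M)$ with $f^\ast \CU \cong p_1^\ast \CL \otimes \CU$.

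Conversely, given a pair $(F, \CL)$, I would build $f$ on $T$-valued points by the formula $(m \colon T \to M, L) \mapsto (F \circ m,\, L \otimes m^\ast \CL)$. This is manifestly a morphism of $\BG_m$-gerbes, since tensoring by $m^\ast \CL$ commutes with the $\BG_m$-action on $L$. Using that every object of $\CM(T)$ is a $\BG_m$-twist of the canonical section $s \colon M \to \CM$, $m \mapsto (m, \CO)$, a direct check shows that the two constructions are mutually inverse.

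For the composition rule, suppose $f \leftrightarrow (F, \CL)$ and $g \leftrightarrow (G, \CL')$. I would compute
\[ (f \circ g)^\ast \CU \,=\, g^\ast f^\ast \CU \,\cong\, g^\ast \bigl(p_1^\ast \CL \otimes \CU\bigr) \,\cong\, p_1^\ast(\text{twist}) \otimes \CU, \]
using $g^\ast p_1^\ast \cong p_1^\ast G^\ast$, and then read off both the underlying automorphism $F \circ G$ and the twisting line bundle. Matching this with the semidirect product structure on $\Aut(M) \ltimes \Pic(M)$, in which $\Aut(M)$ acts on $\Pic(M)$ by pullback, yields the stated formula $(F \circ G,\, \CL \otimes F^\ast \CL')$. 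The argument is entirely formal; the only point requiring care is the variance in the composition law, and this is dictated once a convention is fixed for how $\CL$ is attached to $f$.
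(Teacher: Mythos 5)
Your proposal is correct and follows essentially the same route as the paper: both extract $F$ from the induced map on the coarse space, characterise $\CL$ by $f^{\ast}L_{\mathrm{univ}} \cong p_1^{\ast}\CL \otimes L_{\mathrm{univ}}$ using that $f$ preserves the banding, invert the construction explicitly on $T$-points, and obtain the composition law by pulling back the tautological bundle. The one loose point is the last step: with the standard convention $(f\circ g)^{\ast} = g^{\ast}f^{\ast}$ your computation literally yields the twisting bundle $G^{\ast}\CL \otimes \CL'$ rather than $\CL \otimes F^{\ast}\CL'$, and appealing to ``the semidirect product structure'' does not resolve which order is meant; note, however, that the paper's own proof computes $f^{\ast}g^{\ast}L_{\mathrm{univ}}$ and so carries the same ordering ambiguity, which is harmless in the application since there the $F_g$ are the identity.
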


%
%In particular, if $f,g \colon \CM \to \CM$ are gerbe morphisms corresponding to $(F, \CL)$ and $(G, \CM)$ respectively,
%then $f g f^{-1}$ corresponds to the morphism $F G F^{-1}$ and the line bundle
%\begin{equation} \CL \otimes F^{\ast}(\CM) \otimes (F G)^{\ast}(\CL^{-1}). \label{3rsdf} \end{equation}

\begin{proof}
See also \cite{Heinloth} for an equivalence on the categorical level.
Let $f \colon \CM \to \CM$ be a morphism of gerbes.
Define
$F = p_1 \circ f \circ s$
and let $\CL$ be the line bundle corresponding to $p_2 \circ f \circ s \colon M \to B \BG_m$.
By \cite[Lem.\ 12.2.4]{Olsson} $F$ is an automorphism. %This defines $(F, \CL)$.

Let $L_{\mathrm{univ}}$ be the universal line bundle on $B \BG_m$.
We write $L_{\mathrm{univ}}$ also for its pullback to $M \times B \BG_m$.
Since $f$ is a morphism of gerbes we have\footnote{The restriction to each $m \times B \BG_m$ is equal to $L_{\mathrm{univ}}$ by hypothesis.
Hence $f^{\ast} L_{\mathrm{univ}} = L_{\mathrm{univ}} \otimes p_1^{\ast}L$ for some $L \in \Pic(M)$. Restricting to $M$ yields the claim.}
\[ f^{\ast} L_{\mathrm{univ}} = (f^{\ast} L_{\mathrm{univ}})|_{M} \otimes L_{\mathrm{univ}} = p_1^{\ast}(\CL) \otimes L_{\mathrm{univ}}. \]
Hence given $(F, \CL)$ we can recover $f$ as the product of $F \circ p_1$ and the morphism
associated to $p_1^{\ast}(\CL) \otimes L_{\mathrm{univ}}$.
This yields the 1-to-1 correspondence.

For the last claim, we have that
\[ g^{\ast} L_{\mathrm{univ}} = (g^{\ast} L_{\mathrm{univ}})|_{M} \otimes L_{\mathrm{univ}} = p_1^{\ast}(\CL') \otimes L_{\mathrm{univ}} \]
hence
\[ f^{\ast} g^{\ast} L_{\mathrm{univ}} = p_1^{\ast} F^{\ast}(\CL') \otimes f^{\ast} L_{\mathrm{univ}} \]
which gives the claim by restriction to $M$.
\end{proof}

Let $(\rho, \theta)$ be a $G$-action on $\CM$ such that for all $g \in G$:
\begin{itemize}
\item the morphism $\rho_g$ is a morphism of $\BG_m$-gerbes, and
\item if $(F_g, \CL_g)$ is the pair associated to $\rho_g$, then $F_g = \id$.\footnote{One can always reduce to this case by replacing $\CM$ with $\CM \times_M F$ for an irreducible component $F$ of $M^G$.}
\end{itemize}
%We will determine the fixed stack $\CM^G$.

For a $\BC$-point $p \in M$ the $G$-action $(\rho, \theta)$ induces an action $(\rho^p, \theta^p)$ on $p \times B \BG_m$
such that for all $g \in G$ we have $\rho^p_g \cong \id_{B \BG_m}$ (since $\rho_g$ acts by gerbe morphisms).
Hence as before we have an associated class
\[ \alpha(\theta^p) \in H^2(G, \BC^{\ast}). \]
The class $\alpha(\theta^p)$ vanishes if and only if $(p \times B \BG_m)^G$ is non-empty.
In this case we say that $p \in M$ is \textit{$G$-linearizable}.

By Remark~\ref{rmk:fixed stack base change} the fixed stack $\CM^G$ is non-empty if and only if $M$ contains a $G$-linearizable point.
Hence let $p \in M$ be $G$-linearizable. The $2$-isomorphisms $\theta_{g,h} \colon \rho_g \rho_h \to \rho_{gh}$ induce isomorphisms
\begin{equation} \theta_{g,h} \colon \CL_g \otimes \CL_h \xrightarrow{\cong} \CL_{gh} \label{eqn:mult} \end{equation}
which satisfy the associativity relations \eqref{associativity}.
In particular, up to isomorphism
the line bundles $\CL_g$ only depend on the conjugacy class $\bar{g}$ of $g$
and we obtain a group homomorphism
\[ G_{\mathrm{ab}} \to \Pic(M),\, \bar{g} \mapsto [\CL_g] \]
where $G_{\mathrm{ab}}$ is the abelianization of $G$, and $[\CL]$ stands for the isomorphism class of a line bundle $\CL$.

\vspace{5pt}
\noindent \textbf{Claim.} The $G$-action on $\CM$ is isomorphic to an action which factors through $G_{\mathrm{ab}}$
and such that the isomorphisms \eqref{eqn:mult} are commutative, i.e.\ $\theta_{g,h} = \theta_{h,g}$
where we identify $\CL_g \otimes \CL_{h}$ with $\CL_{h} \otimes \CL_g$ by swapping the factors.
\begin{proof}[Proof of Claim]
Let $H = [G,G]$ and choose representatives $\{ g_1, \ldots, g_r \}$ for the cosets $G/H$ where we take the identity element for the unit coset.
Given any element $g \in g_i H$ we set
$\rho'_g=\rho_{g_i}$.
The isomorphisms $\CL_g \cong \CL_{g_i}$ induced by \eqref{eqn:mult} yield isomorphisms $t_g \colon \rho_g \cong \rho_{g_i} = \rho'_g$.
Consider the action $(\rho'_g, \theta')$ on $\CM$ where $\theta'$ is determined by the commutative diagram
\[
\begin{tikzcd}
\rho_{g} \rho_h \ar{d}{t_g t_h} \ar{r}{\theta_{g,h}} & \rho_{gh} \ar{d}{t_{gh}} \\
\rho'_g \rho'_h \ar{r}{\theta'_{g,h}} & \rho'_{gh}.
\end{tikzcd}
\]
By construction, $\rho'_g$ only depends on the image of $g$ in $G/H$.
We need to show that we can further modify $\theta'$ such that it also only depends on the image in $G/H$, and is commutative.
The key idea is that since $M$ is a complete variety, $\Hom(\CL_g, \CL_g) = \BC$, and hence we may find and check all the required relations
by restricting to the point $p \in M$ where the action is trivial.
Concretely, we may first choose an identification $\CL_g|_{p} \cong \BC_p$ for every $g$.
Since $\alpha(\theta^p) = 0$ we may then modify $\theta'$ (i.e.\ replace $\theta_{g,h}'$ by $\lambda_{g,h} \theta_{g,h}'$ for some $\lambda_{g,h} \in \BC^{\ast}$
which is the derivative of a $1$-cycle)
such that the restrictions
\[ \theta'_{g,h}|_{p} \colon \CL_g|_{p} \otimes \CL_h|_{p} \to \CL_{gh}|_{p} \]
are the identity maps under the given identification.
Since $\CL_{g}$ only depends on $G/H$ it follows that $\theta_{g,g'}$ only depends on the image of $g$ and $g'$ in $G/H$.
(To spell this out: for any $g \in g_i H$, $g' \in g_j H$
and $h, h' \in H$ we have that $\theta_{g,g'}$ and $\theta_{gh, g'h'}$ are both morphisms $\CL_{g_i} \otimes \CL_{g_j} \to \CL_{g_k}$ where $g_i g_j \in g_k H$;
they agree after restriction to $p$ hence they must agree.)
Similarly, the commutativity $\theta'_{g,g'} = \theta'_{g',g}$ %(where we identified $\CL_g \otimes \CL_{g'}$ with $\CL_{g'} \otimes \CL_g$ by swaping factors)
follows by restriction. 
\end{proof}

After replacing $(\rho, \theta)$ with an isomorphic action as in the Claim, we obtain a commutative $\CO_M$-algebra
\[ \CA= \bigoplus_{g \in G_{\mathrm{ab}}} \CL_g, \]
where the multiplication is induced by $\theta$.
%Define $Y = \Spec(\CA)$ and 
Consider the \'etale cover
\[ \pi \colon Y \to M, \quad Y = \Spec(\CA). \]
For every $g \in G$ the natural inclusion $\CL_g \to \CA$ yields a natural isomorphism
\begin{equation} \phi_g \colon \pi^{\ast}(\CL_g) \xrightarrow{\cong} \CO_{Y}. \label{lin_phi_g}\end{equation}
The composition
\[ \pi^{\ast}(\CL_g \otimes \CL_h) \xrightarrow{\phi_g \otimes \id_{\CL_h}} \pi^{\ast}( \CL_h ) \xrightarrow{\phi_h} \CO_Y \]
is induced by $\CL_{g} \otimes \CL_h \to \CA \otimes \CA \to \CA$ and hence isomorphic to
\[ \pi^{\ast} (\CL_{g} \otimes \CL_h) \xrightarrow{\pi^\ast\theta_{g,h}} \pi^{\ast} \CL_{gh} \xrightarrow{\phi_{gh}} \CO_Y. \]
We see that $\phi_g$ gives $s \circ \pi \colon Y \to \CM$ the structure of a $G$-equivariant morphism with respect to the trivial action on $Y$.
This yields a morphism $Y \to \CM^G$.

Define the product 
\[ \CY = Y \times B \BG_m \]
and consider the morphism
\[ f = \pi \times \id_{B \BG_m}\colon \CY \to \CM. \]
As before, the tensor product of $\phi_g$ with the identity on the universal bundle makes $f$ equivariant with respect to the trivial action on $\CY$.
We obtain a morphism $\CY \to \CM^G$.
This yields the following description of the fixed stack. %Then we have the following description of the fixed stack $\CM^G$.

\begin{prop} \label{PropGerbe} In the setting above, if $M$ contains a $G$-linearizable point, then
$f \colon \CY \to \CM$ is the fixed stack of the $G$-action on $\CM$.
\end{prop}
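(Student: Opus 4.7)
The plan is to verify the universal property of the fixed stack. Concretely I will show that for every stack $T$ composition with $f$ induces an equivalence of groupoids
\[ \Hom_{\St}(T, \CY) \xrightarrow{\sim} \Hom_{\GSt}(\iota(T), \CM), \]
from which it follows that the morphism $\CY \to \CM^G$ produced just before the statement is an isomorphism.

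The first step is to unwind both sides explicitly. By Lemma~\ref{LemmaGerbe} a morphism $\mu \colon T \to \CM$ is a pair $(h, \CN)$ with $h \colon T \to M$ and $\CN \in \Pic(T)$; since every $\rho_g$ acts on $\CM$ via the pair $(\id, \CL_g)$, the composite $\rho_g \circ \mu$ is $(h, h^{\ast}\CL_g \otimes \CN)$. A $G$-equivariant refinement of $\mu$ (Remark~\ref{rmk_fixed_stack}) therefore consists of isomorphisms $\alpha_g \colon \CN \xrightarrow{\sim} h^{\ast}\CL_g \otimes \CN$, or equivalently, after tensoring with $\CN^{-1}$, of nowhere-vanishing sections $s_g \in H^0(T, h^{\ast}\CL_g)$. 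The cocycle identity $\theta_{g,h}^{\mu} \circ \rho_g \alpha_h \circ \alpha_g = \alpha_{gh}$ translates into the multiplicative compatibility
\[ h^{\ast}\theta_{g,h} \circ (s_g \otimes s_h) = s_{gh} \]
as sections of $h^{\ast}\CL_{gh}$.

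The second step is to recognize such a system $(s_g)$ as the data of an $\CO_M$-algebra homomorphism $\CA \to h_{\ast}\CO_T$: by adjunction each $s_g^{-1} \colon h^{\ast}\CL_g \to \CO_T$ corresponds to a map $\CL_g \to h_{\ast}\CO_T$, and the displayed identity is exactly multiplicativity against the product on $\CA$ (which was defined via $\theta_{g,h}$). Conversely, every such algebra map has invertible graded components, because the composition $\CL_g \otimes \CL_{g^{-1}} \xrightarrow{\theta} \CO_M$ is already an isomorphism; so algebra maps are in bijection with the systems $(s_g)$. By the universal property of relative Spec this is in turn the same as a morphism $T \to Y$ over $M$, and combining with the $B \BG_m$-factor encoded by $\CN$ yields a unique morphism $T \to \CY$. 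This construction is clearly functorial in $T$, and morphisms of equivariant data correspond to morphisms of $\CY$-points, giving the required equivalence of groupoids.

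Finally, to identify this equivalence with the map $\CY \to \CM^G$ written down before the statement, it is enough to observe that the equivariant structure on $f$ built from the trivializations $\phi_g$ of \eqref{lin_phi_g} is, under the dictionary above, exactly the identity algebra map $\CA \to \pi_{\ast}\CO_Y = \CA$ (tensored with the universal line bundle on $B\BG_m$); this is immediate from the very definition of $\phi_g$. The main subtlety throughout is the $2$-categorical bookkeeping: one must check that the associativity of $\theta$ (together with its commutativity after the normalization from the Claim) matches exactly the algebra-structure axioms on $\CA$, but this is forced by the construction of $\CA$ and amounts to a routine diagram chase.
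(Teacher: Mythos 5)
Your proposal is correct and follows essentially the same route as the paper: both verify the universal property of the fixed stack by unwinding an equivariant morphism $\iota(T) \to \CM$ into a map $T \to M$, a line bundle on $T$, and trivializing sections of the pulled-back $\CL_g$ whose cocycle condition is exactly an $\CO_M$-algebra homomorphism $\CA \to h_{\ast}\CO_T$, forcing a factorization through $Y = \Spec(\CA)$. Your additional checks (invertibility of the graded components via $\CL_g \otimes \CL_{g^{-1}} \cong \CO_M$, and the identification of the resulting equivalence with the previously constructed map $\CY \to \CM^G$) are left implicit in the paper but are consistent with it.
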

\begin{proof}
We have seen above that there is a natural morphism $\CY \to \CM^G$. % obtained from $(f, \phi) \colon \CY \to \CM$.
Conversely, giving an equivariant morphism $h \colon T \to M \times B \BG_m$,
where the scheme $T$ carries the trivial $G$-action,
is equivalent to a line bundle $L$ on $T$, a morphism $h' = p_1 \circ h \colon T \to M$ and
maps $h^{\prime \ast} \CL_{g} \to \CO_T$ satisfying the cocycle condition.
The cocycle condition implies that the induced map
\[ h^{\prime \ast}( \oplus_{g \in G_{\mathrm{ab}}} \CL_g ) \to \CO_T \]
is an algebra homomorphism with respect to the algebra structure on $\oplus_g \CL_g$ defined by $\theta$.
Hence the map $T \to M$ factors through $Y$
and thus $h$ factors through $Y \times B \BG_m$.
This yields the inverse $\CM^G \to \CY$.
\end{proof}

\begin{rmk} \label{rmk_non-trivial-gm-gerbe}
Parallel results hold for a non-trivial $\BG_m$-gerbe $\pi \colon \CM \to M$ with Brauer class $\alpha \in \mathrm{Br}(M)$:
There exists a $\pi^{\ast}(\alpha)$-twisted line bundle $L_{\mathrm{univ}}$ on $\CM$ (playing the role of $L_{\mathrm{univ}}$ as before)
such that for every morphism $f \colon X \to M$ and every $f^{\ast}(\alpha)$-twisted line bundle $\CL$ on $X$
there exists a unique map $F \colon X \to \CM$ such that $F^{\ast}(L_{\mathrm{univ}}) = \CL$ and $f = \pi \circ F$.
A morphism $F \colon \CM\to \CM$ of $\BG_m$-gerbes is then equivalent to the pair of an (untwisted) line bundle $L$ on $M$
and a morphism $f \colon M \to M$ such that $f^{\ast}(\alpha) = \alpha$.
See also \cite[Sec.\ 5]{Heinloth}.
The formulation of the analogue of Proposition~\ref{PropGerbe} is similar.
\end{rmk}

\subsection{Moduli spaces of equivariant objects} \label{subsec:moduli_of_equ}
Let $X$ be a smooth projective variety over $\BC$.
Recall from \cite{Lieblich} the stack 
\[ \mathfrak{M} \colon \mathrm{Sch}/\BC \to \Grpds \]
which associates to each scheme $T$
the groupoid of $T$-perfect universally gluable objects in $D(X \times T)$.
%By work of Lieblich \cite{Lieblich} $\mathfrak{M}$ is
As proven in \emph{loc.~cit.} $\mathfrak{M}$ is a quasi-separated algebraic stack locally of finite type over $\BC$
with affine diagonal, see also \cite[0DPV]{StacksProject} and \cite[Sec.\ 8]{StabinFamilies}. % for further references.

Let $G$ be a finite group which acts on $D^b(X)$.
By Lemma~\ref{lem:FM-action} the action is given by Fourier--Mukai transforms.
The pullback of the Fourier--Mukai kernels define a Fourier--Mukai action $D(X \times T)$
such that the pullback morphisms are $G$-equivariant.
This defines an action of $G$ on $\mathfrak{M}$ in the sense of Section~\ref{Subsection_group_action_on_Stacks},
\[ (\rho, \theta) \colon G \times \mathfrak{M} \to \mathfrak{M}. \]
Remark~\ref{rmk_fixed_stack} yields the following description of the fixed stack:

\begin{prop} \label{prop_fixed_stack}
The fixed stack $\mathfrak{M}^G$ is the stack of $G$-equivariant
universally gluable perfect complexes in $D(X)$, i.e.\ for every scheme $T$ we have
\[
\mathfrak{M}^G(T)
= \{ (\CE,\phi) \in D(X \times T)_{G \times 1} \mid \CE \text{ is universally gluable, $T$-perfect} \}.
\]
The isomorphisms in $\mathfrak{M}^G(T)$ are the isomorphisms of objects in $D(X \times T)_{G \times 1}$. The pullback is the equivariant pullback.
The morphism $\epsilon \colon \mathfrak{M}^G \to \mathfrak{M}$ is the map that forgets the $G$-linearization.
\end{prop}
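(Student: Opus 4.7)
My plan is to prove Proposition~\ref{prop_fixed_stack} as an unpacking of Remark~\ref{rmk_fixed_stack}, which (for any stack with a $G$-action) identifies the fixed stack with the equivariant category in the sense of Definition~\ref{defn_equiv_cat}. Applying this identification at each test scheme $T$ reduces the statement to checking that the $G$-action on the groupoid $\mathfrak{M}(T)$ induced by the Fourier--Mukai action on $D^b(X)$ agrees with the one described in the statement, and that the stack-theoretic operations (pullback, forgetful morphism) match their equivariant-category analogues.

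For the first step I fix $T \in \mathrm{Sch}/\BC$ and recall from Section~\ref{Subsection_group_action_on_Stacks} that the $G$-action on $\mathfrak{M}$ is, over $T$, given by Fourier--Mukai transforms whose kernels are the pullbacks of the $\CE_g$ along $X \times X \times T \to X \times X$, with the correspondingly pulled-back 2-isomorphisms $\theta_{g,h}$. These auto-equivalences preserve universal gluability and $T$-perfectness, so they restrict to the groupoid $\mathfrak{M}(T)$. By Remark~\ref{rmk_fixed_stack} one has $\mathfrak{M}^G(T) = \mathfrak{M}(T)_G$, and Definition~\ref{defn_equiv_cat} identifies the latter with pairs $(\CE, \phi)$ of an object $\CE \in \mathfrak{M}(T)$ together with a family of isomorphisms $\phi = (\phi_g \colon \CE \to \rho_g \CE)_{g \in G}$ satisfying the cocycle condition \eqref{compatibility}---exactly the description of $D(X \times T)_{G \times 1}$ appearing in the proposition. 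Morphisms in $\mathfrak{M}(T)_G$ are morphisms in $D(X \times T)$ (all of which are isomorphisms, since $\mathfrak{M}(T)$ is a groupoid) commuting with the linearizations, matching the claim.

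It remains to match the pullback functors and the morphism $\epsilon$. The pullback along $f \colon T' \to T$ on $\mathfrak{M}^G$ is characterized by the universal property of the fixed stack as the unique lift to $\mathfrak{M}^G$ of the pullback on $\mathfrak{M}$ that is $G$-equivariant for the trivial action on the source; this is precisely the equivariant pullback of Section~\ref{subsec:eq-pushforward and pullback}. The morphism $\epsilon$ is defined in Definition~\ref{defn_fixed_stack} as the tautological map attached to the identity of $\mathfrak{M}^G$, and under the above identification becomes the forgetful functor $(\CE,\phi) \mapsto \CE$. The only mildly delicate point---and the part I would treat most carefully---is verifying that the Fourier--Mukai action is strictly compatible with base change along $T' \to T$, so that the fibrewise $G$-actions on $\mathfrak{M}(T)$ indeed glue into the $G$-action on $\mathfrak{M}$ used throughout; this reduces to the standard base-change properties for Fourier--Mukai kernels recalled in Section~\ref{Subsection_Geometric_case}.
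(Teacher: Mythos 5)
Your proposal is correct and follows essentially the same route as the paper: the paper likewise derives the proposition directly from Remark~\ref{rmk_fixed_stack} (the identification of the fixed stack with the equivariant category), after noting that the pulled-back Fourier--Mukai kernels define the $G$-action on $\mathfrak{M}$ compatibly with base change. Your additional care about matching pullbacks and the forgetful morphism is exactly the unpacking the paper leaves implicit.
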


From now on let $\sigma$ be a stability condition on $D^b(X)$ which is preserved by the $G$-action.
Let $\CM_{\sigma}(v)$ be the moduli stack of $\sigma$-semistable objects of class $v \in K(\CA)$,
i.e.\ for any scheme $T$ we let
\[
\CM_{\sigma}(v)(T)
= \{ \CE \in D(X \times T) \mid \forall t \in T\colon \CE_{t} \text{ is $\sigma$-semistable with } [E_t]=v \}.
\]
Since $G$ preserves $\sigma$-semistability,
for any $G$-invariant $v \in K(\CA)$ we have an action
\[ G \times \CM_{\sigma}(v) \to \CM_{\sigma}(v). \]
The following result follows immediately from %Lemma~\ref{Lemma_stability} and
Proposition~\ref{prop_fixed_stack} and Lemma~\ref{Lemma_stability}. 
\begin{prop} \label{prop_fixed_stack_Semistable}
We have
\[ \CM_{\sigma}(v)^G = \bigsqcup_{\substack{v' \in K(\CA_G) \\ p_{\ast}(v') = v}} \CM_{\sigma_G}(v'), \]
where $\CM_{\sigma_G}(v')$ is the substack of $\mathfrak{M}^G$ defined by
%for all closed points t in T (after Bayer-Macri)
\[ \CM_{\sigma_G}(v')(T) = \{ \CE \in D(X \times T)_{G \times 1} \mid \forall t \in T\colon \CE_t \text{ is $\sigma_G$-semistable, } [\CE_t] = v' \}. \]
\end{prop}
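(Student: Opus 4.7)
The plan is to concatenate three already-established facts: the base-change property of fixed stacks along monomorphisms (Remark~\ref{rmk:fixed stack base change}), the explicit description of $\mathfrak{M}^G$ given in Proposition~\ref{prop_fixed_stack}, and the semistability equivalence of Lemma~\ref{Lemma_stability}.

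First I would observe that $\CM_\sigma(v)\hookrightarrow \mathfrak{M}$ is an open immersion, hence a monomorphism, and is $G$-invariant because $\sigma$ is $G$-fixed and $v$ is $G$-invariant. Remark~\ref{rmk:fixed stack base change} then gives a Cartesian square
\[
\begin{tikzcd}
\CM_\sigma(v)^G \ar{r} \ar{d} & \mathfrak{M}^G \ar{d}{\epsilon} \\
\CM_\sigma(v) \ar{r} & \mathfrak{M},
\end{tikzcd}
\]
so by Proposition~\ref{prop_fixed_stack} a $T$-valued point of $\CM_\sigma(v)^G$ is precisely a pair $(\CE,\phi)\in D(X\times T)_{G\times 1}$ with $\CE$ universally gluable and $T$-perfect, such that for every $t\in T$ the underlying object $\CE_t$ is $\sigma$-semistable of class $v$.

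Next I would rewrite the stability condition: for each $t$, Lemma~\ref{Lemma_stability} asserts that $\CE_t$ is $\sigma$-semistable in $\CA$ if and only if $(\CE_t,\phi_t)$ is $\sigma_G$-semistable in $\CA_G$. This converts the defining condition of $\CM_\sigma(v)^G$ into a pointwise $\sigma_G$-semistability condition on the equivariant object.

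Finally I would stratify by the class $v'=[(\CE_t,\phi_t)]\in K(\CA_G)$. Any such $v'$ satisfies $p_\ast(v')=[\CE_t]=v$, and by a standard Euler-pairing argument (using Lemma~\ref{Lemma_locally_constant_euler_char} applied to a family of equivariant test objects whose Euler characteristics separate classes in $K(\CA_G)$) the class $v'$ is locally constant in $t$. This decomposes $\CM_\sigma(v)^G$ into a disjoint union of open-and-closed substacks indexed by $v'$ with $p_\ast(v')=v$, which by construction coincides with $\CM_{\sigma_G}(v')$. The only mildly delicate step is the local constancy of the $\CA_G$-class; everything else is a bookkeeping exercise on top of the cited statements.
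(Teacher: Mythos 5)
Your proposal is correct and follows essentially the same route as the paper, which simply states that the result "follows immediately from Proposition~\ref{prop_fixed_stack} and Lemma~\ref{Lemma_stability}." You merely spell out the implicit steps (the Cartesian square from Remark~\ref{rmk:fixed stack base change} and the local constancy of the numerical class $v'$ via Lemma~\ref{Lemma_locally_constant_euler_char}), both of which are sound given that $K$-groups are taken numerically.
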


\subsection{The fixed stack of a fine moduli space}
In the setting of Section~\ref{subsec:moduli_of_equ},
%As in Section~\ref{subsec:moduli_of_equ},
%consider a $G$-action on $D^b(X)$
%which preserves a stability condition $\sigma$.
let $v \in K(D^b(X))$ be a $G$-invariant class
such that $\CM_{\sigma}(v)$ has a fine moduli space $M_{\sigma}(v)$ which is smooth.
The goal of this section is to determine the fixed stack $\CM_{\sigma}(v)^G$.

Write $\CM= \CM_{\sigma}(v)$ and $M = M_{\sigma}(v)$.
By assumption there is a universal family
\[ \CE \in D( M \times X), \]
unique up to tensoring with a line bundle pulled back from the first factor.
By the universal property of $\CM$ this yields a section
$s_{\CE} \colon M \to \CM$
of the $\BG_m$-gerbe
$\CM \to M$.
Hence $s_{\CE}$ defines a trivialization
%\begin{equation} \CM_{\sigma}(v) \cong M_{\sigma}(v) \times B\BG_m. \label{1992} \end{equation}
\begin{equation} \CM \cong M \times B\BG_m. \label{1992} \end{equation}
The universal family $\CE_{\CM} \in D(\CM \times X)$
is identified under \eqref{1992} with
\[ 
(p_1 \times \id_X)^{\ast}(\CE) \otimes p_2^{\ast}(L_{\mathrm{univ}}) % \in D(\CM_{\sigma}(v) \times X) 
\]
where $p_1, p_2$ are the projections to the factors.

Let $f \colon \CM \to \CM$ be a morphism of $\BG_m$-gerbes and let
\[ F = p_1 \circ f \circ s_{\CE}, \quad \CL = (p_2 \circ f \circ s_{\CE})^{\ast} L_{\mathrm{univ}} \]
be the associated automorphism and line bundle as in Lemma~\ref{LemmaGerbe}.
We consider the difference of the pullbacks of the universal families under $F$ and $f$.

\begin{lemma} \label{LemmaUnivFam}
In the situation above, we have
\[ ((f\times \id_X)^{\ast}\CE_{\CM})|_{M} = (F\times \id_X)^{\ast}(\CE) \otimes \CL. \]
\end{lemma}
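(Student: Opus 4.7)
The plan is to prove the lemma by a direct unwinding of the trivialization \eqref{1992} together with the definitions of $F$ and $\CL$ from Lemma~\ref{LemmaGerbe}. The argument is essentially bookkeeping; there is no substantive obstacle, so the main task is simply to name the steps cleanly.

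First I would interpret the restriction notation $(-)|_M$ as pullback along the section $s_\CE \times \id_X \colon M \times X \to \CM \times X$. Since $s_\CE$ trivializes the $\BG_m$-gerbe $\CM \to M$, this is the natural meaning. By functoriality of pullback,
\[
((f \times \id_X)^* \CE_{\CM})|_M \;=\; ((f \circ s_\CE) \times \id_X)^* \CE_{\CM}.
\]

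Next I would substitute the explicit description of the universal family provided by the trivialization, namely $\CE_{\CM} \cong (p_1 \times \id_X)^* \CE \otimes p_2^* L_{\mathrm{univ}}$, and distribute the pullback over the tensor product. This produces two factors to identify: the first is $((p_1 \circ f \circ s_\CE) \times \id_X)^* \CE$, which by the definition $F = p_1 \circ f \circ s_\CE$ equals $(F \times \id_X)^* \CE$; the second is $((p_2 \circ f \circ s_\CE) \times \id_X)^* L_{\mathrm{univ}}$, which is the pullback to $M \times X$ of the line bundle $(p_2 \circ f \circ s_\CE)^* L_{\mathrm{univ}}$ on $M$, i.e.\ of $\CL$ by definition. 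Combining the two gives the desired formula.

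The only step requiring any care is keeping track of which factor the $\id_X$ sits on and that $p_2^* L_{\mathrm{univ}}$, viewed on $\CM \times X$, pulls back along $s_\CE \times \id_X$ to the pullback of a line bundle on $M$ — but this follows immediately from the product structure $\CM = M \times B\BG_m$ and the fact that $L_{\mathrm{univ}}$ is pulled back from the second factor. Thus the lemma reduces to the identity $(F\times\id_X)^*\CE \otimes \CL$ after these identifications, as claimed.
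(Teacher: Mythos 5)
Your proposal is correct and follows essentially the same route as the paper: both arguments unwind the trivialization $\CE_{\CM} = (p_1\times \id_X)^{\ast}(\CE) \otimes L_{\mathrm{univ}}$ and distribute the pullback over the tensor product, identifying the two factors via the definitions of $F$ and $\CL$. The only (cosmetic) difference is that you compose with the section $s_{\CE}$ first and then compute, whereas the paper computes $(f\times\id_X)^{\ast}\CE_{\CM}$ on the whole gerbe using $f^{\ast}L_{\mathrm{univ}} = p_1^{\ast}(\CL)\otimes L_{\mathrm{univ}}$ and restricts to $M$ at the end.
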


\begin{proof}
Under the identification \eqref{1992} 
we have $\CE_{\CM} = (p_1\times \id_X)^{\ast}(\CE) \otimes L_{\mathrm{univ}}$.
Hence
\begin{align*} (f\times \id_X)^{\ast}( \CE_{\CM} ) & = (f\times \id_X)^{\ast}( (p_1\times \id_X)^{\ast}(\CE) ) \otimes (f\times \id_X)^{\ast} L_{\mathrm{univ}}  \\
& = (p_1\times \id_X)^{\ast}( (F\times \id_X)^{\ast}(\CE) ) \otimes ( (p_1 \times \id_X)^{\ast}(\CL) \otimes  L_{\mathrm{univ}} ) \\
& = (p_1\times \id_X)^{\ast}( (F\times \id_X)^{\ast}(\CE) \otimes \CL) \otimes  L_{\mathrm{univ}}.
\end{align*} 
Restricting to $M$ completes the claim.
\end{proof}

Consider the action of $G$ on $\CM$. % = \CM_{\sigma}(v)$.
For every $g \in G$ the morphism $\rho_g \colon \CM \to \CM$
commutes with the inclusion of the automorphism groups (in the derived category, we have $g (\lambda \id ) = \lambda g (\id) = \lambda \id$)
and hence is a morphism of $\BG_m$-gerbes.
Let
\[ F_g \colon M \to M, \quad \CL_g \in \Pic(M) \]
be the associated pair constructed in Lemma~\ref{LemmaGerbe}.
By Lemma~\ref{LemmaUnivFam} the line bundle $\CL_g$ can also be described by
\begin{equation} (1 \times g)(\CE) = ((1 \times g)\CE_{\CM})|_{M} 
= ((\rho_g \times \id_X)^{\ast}\CE_{\CM})|_M = (F_g\times \id_X)^{\ast}(\CE) \otimes \CL_g. \label{dsfdsf} \end{equation}

Let $F$ be a connected component of the fixed locus $M^G \subset M$ and let
$L_g = \CL_g|_{F}$
which only depends on the conjugacy class of $g$,
see the discussion in Section~\ref{Subsection_group_action_on_Stacks}.
Consider further the associated \'etale cover
\begin{equation} Y = \Spec\left( \bigoplus_{g \in G_{ab}} L_{g} \right), \quad \pi \colon Y \to F \label{defn:etale cover 2} \end{equation}
and define
\[
\CY = Y \times B \BG_m, \quad \epsilon \colon \CY \xrightarrow{\pi \times \id_{B \BG_m}} F \times B \BG_m \to \CM.
\]

\begin{prop} \label{prop_fixed_stack_for_fine_moduli_space}
In the setting above, 
if $F$ contains a $G$-linearizable point,
then $\CY$ 
is the union of the connected components of $\CM^G$ which map to $F$
and $\epsilon \colon \CY \to \CM$ is the restriction of the classifying map $\CM^G \to \CM$ to $\CY$.
The universal linearization of $\epsilon^{\ast}( \CE_{\CM} )$
is pulled back from the canonical linearization of $(\pi \times \id_X)^{\ast}( \CE|_{F \times X})$.
\end{prop}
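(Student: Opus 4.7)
The plan is to reduce to Proposition~\ref{PropGerbe} via the trivialization \eqref{1992} and the base change behaviour of fixed stacks. First, since $F$ is preserved setwise by each $F_g$, under the identification $\CM \cong M \times B\BG_m$ the substack $F \times B\BG_m \subset \CM$ is a $G$-invariant closed substack; the inclusion is a $G$-equivariant monomorphism. By Remark~\ref{rmk:fixed stack base change} the fixed stack $(F \times B\BG_m)^G$ fits into a fibre diagram with $\CM^G$ over $\CM$, and therefore coincides with the union of those connected components of $\CM^G$ whose image in $M$ lies in $F$. This reduces the problem to computing the fixed stack of the restricted action on $F \times B\BG_m$.

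Next, I would apply Proposition~\ref{PropGerbe} to the restricted $G$-action on $F \times B\BG_m$. By Lemma~\ref{LemmaGerbe}, the restriction $\rho_g|_{F \times B\BG_m}$ is a $\BG_m$-gerbe morphism over $F$ corresponding to the pair $(\id_F, L_g)$, where $L_g = \CL_g|_F$. As in the discussion preceding the Claim in Section~3.2, the $2$-isomorphisms $\theta_{g,h}$ then induce commutative multiplications $L_g \otimes L_h \to L_{gh}$, so that $L_g$ depends only on the conjugacy class of $g$ and equips $\bigoplus_{g \in G_{\mathrm{ab}}} L_g$ with the structure of an $\CO_F$-algebra. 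The hypothesis that $F$ contains a $G$-linearizable point is exactly the vanishing of the obstruction $\alpha(\theta^p)$ required by Proposition~\ref{PropGerbe}. That proposition then identifies $(F \times B\BG_m)^G$ via the classifying morphism with $\CY = Y \times B\BG_m$, where $Y = \Spec\bigl(\bigoplus_{g \in G_{\mathrm{ab}}} L_g\bigr)$ is precisely the \'etale cover of \eqref{defn:etale cover 2}.

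For the claim about the universal linearization, I would trace the equivariance structure through the trivialization. By Lemma~\ref{LemmaUnivFam}, the pullback $\epsilon^{\ast}\CE_{\CM}$ is identified with $(\pi \times \id_X)^{\ast}(\CE|_{F \times X})$ up to tensoring with $p_2^{\ast} L_{\mathrm{univ}}$. On $F \times X$, equation \eqref{dsfdsf} reads $(1 \times g)\CE|_F = \CE|_F \otimes L_g$. Pulling this back to $Y$ and applying the tautological trivializations $\phi_g \colon \pi^{\ast} L_g \xrightarrow{\cong} \CO_Y$ of \eqref{lin_phi_g} produces a canonical family of isomorphisms from $(\pi \times \id_X)^{\ast}(\CE|_{F \times X})$ to its translate by $1 \times g$; the cocycle condition follows from the algebra relation $L_g \otimes L_h \to L_{gh}$ that was used to define $Y$. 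Extending by the identity on $L_{\mathrm{univ}}$ then yields the required linearization on $\epsilon^{\ast}\CE_{\CM}$.

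The main obstacle I anticipate is verifying that the morphism $\CY \to \CM^G$ produced by Proposition~\ref{PropGerbe} agrees, via the universal property of $\CM^G$, with the map classifying $\epsilon^{\ast}\CE_{\CM}$ endowed with the linearization described above, rather than with some $G^{\vee}$-twist of it. Any such ambiguity corresponds exactly to the $G^{\vee}$-torsor of linearizations in Lemma~\ref{lem:Ploog} and can be pinned down by checking compatibility fibrewise over a single $G$-linearizable point $p \in F$, where both constructions reduce to the canonical trivialization of $\CL_g|_p$ chosen in the proof of the Claim in Section~3.2.
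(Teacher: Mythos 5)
Your proposal is correct and follows essentially the same route as the paper: the paper's (very terse) proof also deduces the first claim by restricting to $F \times B\BG_m$ and invoking Proposition~\ref{PropGerbe}, and obtains the linearization statement from the trivializations $\phi_g$ of \eqref{lin_phi_g}. Your additional fibrewise check over a $G$-linearizable point is not needed, since the map $\CY \to \CM^G$ produced by Proposition~\ref{PropGerbe} is by construction the one classifying the equivariant structure built from the $\phi_g$, which is exactly the canonical linearization in question.
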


By Proposition~\ref{prop_fixed_stack}, a point $p \in F$ is $G$-linearizable
if and only if the corresponding $G$-invariant object $\CE_p$
is $G$-linearizable.
Using Proposition~\ref{prop_fixed_stack_for_fine_moduli_space}
we see that there exists a $G$-linearizable point $p \in F$
if and only if every point on $F$ is $G$-linearizable.
In this case we say that the connected component $F$ of $M^G$ is $G$-linearizable.

\begin{proof}
The first statement is Proposition~\ref{PropGerbe}.
The second part follows since the linearization on $\CY$ is the pullback
of the linearization on $Y$ given by \eqref{lin_phi_g}.
%The second follows because the $G$-action on $\CM^G = Y \times B \BG_m$ acts by gerbe morphisms.
\end{proof}

\begin{comment}
In general it seems difficult to detect whether the cover $\pi \colon Y \to M^G$ is trivial:
Of course, this is the case if $M^G$ is simply connected.
For another approach by Proposition~\ref{prop_fixed_stack_Semistable} and Prop~\ref{prop_fixed_stack_for_fine_moduli_space} we have
\[ Y \times B \BG_m = \CM_{\sigma}(v)^G = \bigsqcup_{\substack{v' \in K(\CA_G) \\ p_{\ast}(v') = v}} \CM_{\sigma_G}(v'). \]
Hence if we would know that $\CM_{\sigma_G}(v')$ is always connected and also have a criterion for which $v'$ these moduli spaces are non-empty,
then we know the number of connected components of $Y$.
If we would also know the number and dimension of the connected components of $M^G$, then this would allow some conclusion whether $Y \to M^G$ is trivial.
\end{comment} 

\begin{rmk} \label{remark_Gdual_action}
The action of $G^{\vee}$ on $D^b(X)_G$ by twisting the linearization preserves the stability condition $\sigma_G$.
Moreover, for every $\chi \in G^{\vee}$ we have $p_{\ast} \chi v' = p_{\ast} v'$.
Hence we have an induced action of $G^{\vee}$ on
\[ \CM_{\sigma}(v)^G = \bigsqcup_{p_{\ast}(v') = v} \CM_{\sigma_G}(v'). \]
By Lemma~\ref{lem:Ploog} the action is free if $\CM_{\sigma}(v)$ is a moduli space of stable objects.

In the setting of Proposition~\ref{prop_fixed_stack_for_fine_moduli_space},
the $G^{\vee}$-action can be desribed by letting a character $\chi \in G^{\vee}$ act on the line bundle $\CL_g$ by multiplication by $\chi(g)$.
In particular, $Y/G^{\vee} = F$ and the projection
$\pi \colon Y \to F$
is a $G^{\vee}$-torsor (in the \'etale topology).
%Since any two $G$-linearizations of a $G$-invariant stable object in $D^b(X)$ differ by a character \cite[Lem.\ 1]{Ploog}, we have $Y/G^{\vee} = F$.
%In other words, $\pi \colon Y \to F$ is a principle $G^{\vee}$-bundle.
%the $G^{\vee}$-action is
%\[ \rho \colon G^{\vee} \times Y \to Y \]
%such that $\pi \circ \rho_{\chi} = \pi$.
%the quotient $Y/G^{\vee}$ is precisey $M$.
%we see that the quotient $Y/G^{\vee}$ is precisey $M$, or in other words that $\pi \colon Y \to M$ is Galois of order $G^{\vee}$.
\end{rmk}

\begin{rmk} \label{rmk:coarse moduli space}
The results of this section can be generalized to the case when $\pi \colon \CM_{\sigma}(v) \to M_{\sigma}(v)$ is a non-trivial $\BG_m$-gerbe
(if $\CE \in D(M \times X, -\alpha)$ is the twisted universal object, then
the universal family $\CE_{\CM}$ on the stack $\CM \times X$ is given by $\pi^{\ast}(\CE) \otimes L_{\mathrm{univ}}$,
see also Remark~\ref{rmk_non-trivial-gm-gerbe}).
%This case occurs precisely if $M_{\sigma}(v)$ is only a coarse moduli space of stable objects.
\end{rmk}

\begin{example} \label{Example_EE}
Let $E$ be an elliptic curve and let $t_a \colon E \to E$ be the translation by a $2$-torsion point $a \in E$.
The group $G = \BZ_2$ acts on $\Coh(E)$ by $t_a^{\ast}$.
Let $E' = E/t_a$.
The equivariant category is
$\Coh(E)_{G} = \Coh(E')$.
Consider the moduli stack $\CM = \CM(1,0)$ of Gieseker stable sheaves with Chern characters $v=(1,0) \in H^{2 \ast}(E)$
or equivalently the moduli stack of degree $0$ line bundles. It admits the fine moduli space
$M \cong E$ with universal family the Poincar\'e bundle $\CP$ on $E \times E$. Hence
$\CM \cong E \times B \BG_m$.
Since every degree $0$ line bundle is translation invariant,
the group $G$ induces the trivial action on $M$. However, because of
\[ (1 \times t_a^{\ast})(\CP) = (\id \times t_a)^{\ast} \CP = \CP \otimes p_1^{\ast} \CP_{a}, \]
the bundle $\CP$ can not be linearized over $M$.
Indeed by Proposition~\ref{prop_fixed_stack_for_fine_moduli_space} (with $L_g = \CP_a$) one has
$\CM^G = \tilde{E} \times B \BG_m$ where $\tilde{E}$ is the cover of $E$ defined by $\CP_a$. % and in particular connected.

An alternative description of the fixed stack is also given by Proposition~\ref{prop_fixed_stack_Semistable}
as follows:
%. It shows that
\[ \CM^G = \CM_{E'}(1,0) \cong E' \times B \BG_m. \]
Since $E' \cong \tilde{E}$ these two presentations agree with each other.
\end{example}

\begin{example} \label{example:no G-lin}
We give an example of a component which is not $G$-linearizable.

Let $G = \BZ_2 \times \BZ_2$ be the subgroup of $2$-torsion points of $E$ acting by translation.
Let $\CM = \CM(1,2)$ be the moduli stack of degree $2$ line bundles and let $M \cong E$ be its fine moduli space.
Then $M^G = M$ but $\CM^G = \varnothing$, so $M$ is not $G$-linearizable. Indeed,
any $G$-linearization of a degree $2$ line bundle $L$ is a descent datum for the quotient map $\pi \colon E \to E/G$.
Hence there would exists a line bundle $L'$ on $E/G$ with $\pi^{\ast} L' = L$ which would imply that the degree of $L$ is divisible by $4$.
\end{example}

\subsection{The Artin--Zhang functor}
\label{subsec:ArtinZhang}
As before we consider an action of a finite group $G$ on $D^b(X)$ which preserves a stability condition $\sigma = (\CA,Z)$.
In this section we further assume the following properties:
\begin{itemize}
%\item $D^b(\CA) \cong D^b(X)$
\item $\CA$ is Noetherian,
\item $\CA$ satisfies the generic flatness property of \cite[Prop.\ 3.5.1]{AP}.
\end{itemize}
The second condition implies that the subfunctor $\mathfrak{M}_{\CA} \subset \mathfrak{M}$ of objects, such that every geometric fiber lies in $\mathfrak{\CA}$, is open.
By Remark~\ref{rmk:fixed stack base change} the open immersion $\mathfrak{M}_{\CA} \subset \mathfrak{M}$ yields the fiber diagram
\begin{equation}
\label{sdasd}
\begin{tikzcd}
(\mathfrak{M}_{\CA})^G \ar[hookrightarrow]{r} \ar{d}{\epsilon} & \mathfrak{M}^G \ar{d} \\
\mathfrak{M}_{\CA} \ar[hookrightarrow]{r} & \mathfrak{M}.
\end{tikzcd}
\end{equation}
By base change this shows that also $(\mathfrak{M}_{\CA})^G \subset \mathfrak{M}^G$ is an open immersion.

Given a cocomplete\footnote{i.e.\ $\CC$ has all small filtered colimits}, locally noetherian, $k$-linear abelian category $\CC$,
let $\CN_{\CC}$ be the stack of finitely presented objects in $\CC$
as introduced by Artin and Zhang \cite{AZ}, see also \cite[Def.\ 7.8]{AHLH}.
Concretely, for a commutative ring $R$ let $\CC_{R}$ be the category of pairs $(E,\phi)$ with $E$ an object in $\CC$ and $\phi \colon R \to \End_{\CC}(E)$ a morphism of $k$-algebras.
Then $\CN_{\CC}(\Spec R)$ is the groupoid of flat and finitely presented objects in $\CC_{R}$,

As discussed in \cite[Ex.\ 7.20]{AHLH} our assumptions on $\CA$ imply that the stacks $\mathfrak{M}_{\CA}$ and $\CN_{\mathrm{Ind}(\CA)}$ are equivalent,
where $\mathrm{Ind}(\CA)$ is the Ind-completion of $\CA$.
Our first goal is to prove the parallel result for the equivariant abelian category $\CA_G$:
\begin{prop} \label{prop:M=N} $(\mathfrak{M}_{\CA})^G \cong \CN_{\mathrm{Ind}(\CA_G)}$. \end{prop}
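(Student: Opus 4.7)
The plan is to bootstrap from the ungraded Artin--Zhang identification $\mathfrak{M}_{\CA} \cong \CN_{\mathrm{Ind}(\CA)}$ quoted just before the statement, and then to interchange the two operations ``passing to $G$-equivariants'' and ``taking the ind-completion.'' First I would check that this identification is $G$-equivariant: since $\sigma$ is $G$-invariant, each $\rho_g$ restricts to an auto-equivalence of $\CA$, and being exact with $G$ finite it extends uniquely to an auto-equivalence of $\mathrm{Ind}(\CA)$ preserving filtered colimits; the induced $G$-action on $\CN_{\mathrm{Ind}(\CA)}$ matches the Fourier--Mukai action on $\mathfrak{M}_{\CA}$ under the equivalence. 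Taking fixed stacks on both sides (using the base-change property of fixed stacks of Remark~\ref{rmk:fixed stack base change} together with the open immersion \eqref{sdasd}) reduces the proposition to the equivalence
\[ \CN_{\mathrm{Ind}(\CA)}^{G} \;\cong\; \CN_{\mathrm{Ind}(\CA_G)}. \]

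Next I would unwind both sides on a test scheme $T = \Spec R$. By Definition~\ref{defn_fixed_stack} and Remark~\ref{rmk_fixed_stack}, a $T$-point of the left-hand side is a $G$-equivariant map $\iota(T) \to \CN_{\mathrm{Ind}(\CA)}$ with the trivial $G$-action on $T$, which amounts to a flat, finitely presented object $E \in \mathrm{Ind}(\CA)$ equipped with a ring map $R \to \End(E)$ and a $G$-linearization $\phi_g \colon E \to \rho_g E$ commuting with the $R$-action. A $T$-point of the right-hand side is a flat, finitely presented object of $\mathrm{Ind}(\CA_G)$ with a compatible $R$-action. Hence the proposition comes down to the auxiliary equivalence $\mathrm{Ind}(\CA)_G \cong \mathrm{Ind}(\CA_G)$, combined with the observation that flatness and finite presentation are detected by the exact and conservative forgetful functor $p$, so they may be checked after forgetting the linearization.

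For the auxiliary equivalence, the natural functor $\mathrm{Ind}(\CA_G) \to \mathrm{Ind}(\CA)_G$ sends a filtered diagram in $\CA_G$ to itself viewed in $\CA$ with its canonical linearization. For the inverse I would exploit the biadjunction $(p,q,p)$ from \eqref{linearlization_functor}, extended termwise to ind-objects: any equivariant ind-object $(E,\phi) = (\mathrm{colim}_i A_i, \phi)$ is realized, via the unit of adjunction, as a retract of $qp(E) \cong \mathrm{colim}_i qA_i$, which is manifestly a filtered colimit of objects of $\CA_G$. Since $\mathrm{Ind}(\CA_G)$ is closed under retracts, this yields the desired inverse.

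The main obstacle will be making the last step fully rigorous: organizing the $G$-linearization on an arbitrary filtered colimit into a coherent filtered diagram in $\CA_G$ in a natural, $R$-linear way, and then verifying that flatness and finite presentation as formulated by Artin--Zhang transfer between the two sides. Conceptually this is the fact that ``$G$-equivariants'' is a finite $2$-limit while Ind-completion is a filtered $2$-colimit cocompletion, and these commute for a finite group acting on a $\BC$-linear abelian category; concretely, one uses that $p$ is faithfully exact so that exactness of $-\otimes_R E$ is both preserved and reflected when passing between $\mathrm{Ind}(\CA)_G$ and $\mathrm{Ind}(\CA_G)$.
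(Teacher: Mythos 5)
Your proposal is correct, and its skeleton --- unwinding $(\mathfrak{M}_{\CA})^G$ on a test ring $R$ via Remark~\ref{rmk_fixed_stack} and reducing everything to the auxiliary equivalence $\Ind(\CA)_G \cong \Ind(\CA_G)$ --- is exactly the paper's. Where you genuinely diverge is in the proof of that auxiliary equivalence. The paper (Lemma~\ref{indG=Gind}) leans on the Noetherian hypothesis: a preceding lemma shows every object of $\Ind(\CA)$ is a filtered union of subobjects lying in $\CA$, and for an equivariant ind-object $(E,\phi)$ these subobjects are made $G$-stable by replacing $E_i$ with the finite union $\bigcup_{g \in G} \phi_g^{-1}(g E_i)$, so that $\phi$ restricts to a linearization of each piece; this directly resolves the ``coherent filtered diagram'' issue you correctly flag as the main obstacle, and the same device handles fullness. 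Your retract argument sidesteps that issue entirely: since $|G|$ is invertible over $\BC$, the composite $(E,\phi) \to qp(E,\phi) \to (E,\phi)$ given by $\oplus_g \phi_g$ followed by $\oplus_g \phi_g^{-1}$ equals $|G|\cdot \id$, so $(E,\phi)$ is a direct summand of $q(E) \cong \mathrm{colim}_i\, q(A_i)$, which visibly lies in the image of $\Ind(\CA_G)$; together with full faithfulness (taking $G$-invariants of Hom commutes with filtered colimits for $G$ finite) and idempotent completeness of the abelian category $\Ind(\CA_G)$, this gives essential surjectivity. Your route trades Noetherianness for separability of $p$ (automatic in characteristic zero and so available here), and is arguably cleaner; the paper's route is more hands-on but produces the explicit ``union of $G$-stable subobjects'' picture. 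One point to make explicit in either approach, which the paper also passes over quickly: the identification of groupoids must match the Artin--Zhang flatness and finite-presentation conditions on both sides. Your claim that these transfer along $p$ is right, though for finite presentation (compactness) the cleanest mechanism is the colimit-preserving (bi)adjoint $q$ rather than faithful exactness of $p$ alone.
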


We begin with two technical lemmata.
\begin{lemma}
If $\CA$ is a Noetherian abelian $\BC$-linear category, then every object in $\Ind(\CA)$ can be written as a union of objects in $\CA$.
\end{lemma}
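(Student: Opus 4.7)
The plan is to take the standard presentation of objects in the Ind-completion and improve it using the Noetherian hypothesis so that the structure maps become monomorphisms. Concretely, let $X \in \Ind(\CA)$. By the very construction of $\Ind(\CA)$ we can write $X = \varinjlim_{i \in I} X_i$ as a filtered colimit of a diagram $(X_i)_{i \in I}$ in $\CA$, with canonical structure maps $\iota_i \colon X_i \to X$ in $\Ind(\CA)$. My aim is to replace each $X_i$ by its image $Y_i := \mathrm{im}(\iota_i)$, show each $Y_i$ lies in $\CA$, and then identify $X$ with the directed union $\bigcup_i Y_i$.

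The first key step is to verify that $Y_i \in \CA$. For this I would analyze $\ker(\iota_i)$. Since the kernel in $\Ind(\CA)$ commutes with filtered colimits over the cofinal subdiagram $\{j \in I : j \geq i\}$, one has
\[ \ker(\iota_i) \;=\; \varinjlim_{j \geq i} \ker\bigl(X_i \to X_j\bigr), \]
which is an increasing union of subobjects of $X_i$ inside $\CA$. Because $\CA$ is Noetherian, this ascending chain of subobjects of $X_i$ stabilizes, so $\ker(\iota_i)$ is isomorphic to some $\ker(X_i \to X_{j_0})$ and hence lies in $\CA$. Consequently $Y_i = X_i / \ker(\iota_i)$ lies in $\CA$ as well (it is a quotient of $X_i$ in $\CA$, and this quotient agrees with the image in $\Ind(\CA)$ since the inclusion $\CA \hookrightarrow \Ind(\CA)$ is exact).

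By construction each $Y_i$ is a subobject of $X$, and the transition morphisms $X_i \to X_j$ induce compatible inclusions $Y_i \hookrightarrow Y_j$ whenever $i \leq j$, giving a filtered system of subobjects of $X$ in $\CA$. The final step is to check that $X = \varinjlim_i Y_i$, i.e.\ $X$ is the union of the $Y_i$. This follows by factoring the colimit cone $X_i \to X$ as $X_i \twoheadrightarrow Y_i \hookrightarrow X$ and using that filtered colimits are exact in $\Ind(\CA)$, so $\varinjlim_i Y_i$ surjects onto $X$ and injects into $X$.

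The only place where anything non-formal happens is the verification that $\ker(\iota_i) \in \CA$, and this is precisely where the Noetherian hypothesis is indispensable; without it the chain of kernels need not stabilize and one cannot truncate the filtered colimit into $\CA$. So this stabilization is the main (and only genuine) obstacle; the remainder is formal manipulation inside $\Ind(\CA)$.
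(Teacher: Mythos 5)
Your proof is correct and follows essentially the same route as the paper: both write $X=\varinjlim X_i$ and show that the images $\mathrm{Im}(X_i\to X)$ lie in $\CA$ by a Noetherian stabilization argument, then identify $X$ with the union of these images. The only (cosmetic) difference is where the stabilization is applied --- you stabilize the ascending chain of kernels $\ker(X_i\to X_j)$ inside the source object $X_i$, whereas the paper first establishes that $\CA$ is closed under subobjects and quotients in $\Ind(\CA)$ by stabilizing an ascending chain of images inside a fixed object of $\CA$.
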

\begin{proof}\footnote{We thank Eugen Hellman for providing this argument.}
Given objects $E \in \CA$ and $F \in \Ind(\CA)$ and an inclusion $F \subset E$ in $\Ind(\CA)$
we first claim that $F \in \CA$.
Indeed, write $F = \lim_i F_i$ where the $F_i$ lie in $\CA$. Then since $F \to E$ is a monomorphism we have $F_i' \coloneqq \mathrm{Im}(F_i \to F) = \mathrm{Im}(F_i \to E)$ and thus this image lies in $\CA$. Therefore, $F$ is a union of objects in $\CA$ (namely the $F_i'$) which are subjects of $E$.
Since $E$ is Noetherian, this union has to stabilize and since abelian categories contain finite colomits, $F \in \CA$ as desired.
Now, if $E \to F$ is a quotient in $\Ind(\CA)$ with $E \in \CA$ and $F \in \Ind(\CA)$ then by the above the kernel lies in $\CA$ and hence so does $F$.
Therefore $\CA$ is closed under quotients in $\Ind(\CA)$.
We conclude, that if $E = \lim_{i} E_i$ with $E_i \in \CA$, then $E$ is the union of the $F_i = \mathrm{Im}(E_i \to E)$.
\end{proof}

\begin{lemma} \label{indG=Gind}
Let $\CA$ be a Noetherian abelian $\BC$-linear category and $G$ a finite group. Then there exists a canonical isomorphism $\Ind(\CA_G) \cong \Ind(\CA)_G$.
\end{lemma}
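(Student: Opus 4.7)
My plan is to construct an explicit comparison functor $\Phi\colon \operatorname{Ind}(\CA_G) \to \operatorname{Ind}(\CA)_G$ and check it is an equivalence using the previous lemma together with averaging.

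First I would extend the $G$-action on $\CA$ to a $G$-action on $\operatorname{Ind}(\CA)$. Since each $\rho_g$ is an equivalence (in particular exact and accessible), it has a canonical filtered-colimit-preserving extension $\operatorname{Ind}(\rho_g)\colon \operatorname{Ind}(\CA) \to \operatorname{Ind}(\CA)$, and the 2-isomorphisms $\theta_{g,h}$ extend in the same way. One checks the associativity axiom \eqref{associativity} persists, so $\operatorname{Ind}(\CA)_G$ makes sense. An equivariant object $(E,\phi) \in \CA_G$ yields an equivariant object in $\operatorname{Ind}(\CA)_G$ via the inclusion $\CA \hookrightarrow \operatorname{Ind}(\CA)$, giving a fully faithful functor $\CA_G \hookrightarrow \operatorname{Ind}(\CA)_G$. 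Since $\operatorname{Ind}(\CA)_G$ admits small filtered colimits (computed after applying the forgetful functor $p$, as $p$ has both adjoints and therefore commutes with all colimits), the universal property of Ind-completion produces a unique filtered-colimit-preserving extension $\Phi\colon \operatorname{Ind}(\CA_G) \to \operatorname{Ind}(\CA)_G$.

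Next I would prove essential surjectivity of $\Phi$. Given $(E,\phi) \in \operatorname{Ind}(\CA)_G$, the previous lemma writes $E$ as a directed union $E = \bigcup_i E_i$ with $E_i \in \CA$. The linearization $\phi$ need not preserve the $E_i$, but we set
\[ F_i = \sum_{g \in G} \rho_g(E_i) \subset E, \]
where the sum is taken inside $E \in \operatorname{Ind}(\CA)$. Because $G$ is finite and $\CA$ is Noetherian, each $F_i$ lies in $\CA$ (it is a finite sum of subobjects of the object $E_i' \in \CA$ obtained by iterating the same construction, using Noetherianness to cut off the union). Each $F_i$ is $G$-stable, so $\phi$ restricts to a linearization on $F_i$, and $(E,\phi) = \operatorname{colim}(F_i,\phi|_{F_i})$ in $\operatorname{Ind}(\CA)_G$. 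Hence $(E,\phi)$ lies in the essential image of $\Phi$.

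For fully faithfulness, the point is that for $X,Y \in \CA_G$ the space $\operatorname{Hom}_{\CA_G}(X,Y) = \operatorname{Hom}_\CA(X,Y)^G$ is preserved when passing to Ind-objects: since we work over $\BC$ and $G$ is finite, taking $G$-invariants is exact and commutes with filtered colimits. Combined with the fact that $X \in \CA_G$ remains compact in $\operatorname{Ind}(\CA)_G$ (again using that $p$ preserves filtered colimits and detects compactness, together with the compactness of $X$ in $\operatorname{Ind}(\CA)$), one checks that for any $Y \in \operatorname{Ind}(\CA_G)$ written as a filtered colimit $Y = \operatorname{colim}_j Y_j$ with $Y_j \in \CA_G$, both $\operatorname{Hom}_{\operatorname{Ind}(\CA_G)}(X,Y)$ and $\operatorname{Hom}_{\operatorname{Ind}(\CA)_G}(\Phi X, \Phi Y)$ identify with $\operatorname{colim}_j \operatorname{Hom}_{\CA_G}(X,Y_j)$. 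An analogous argument, writing $X$ itself as a filtered colimit of objects of $\CA_G$ and using that filtered colimits are exact in $\operatorname{Ind}(\CA)_G$, extends this to arbitrary $X \in \operatorname{Ind}(\CA_G)$.

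The main obstacle is the essential surjectivity step, specifically verifying that the objects $F_i$ obtained by averaging a non-invariant approximation actually land in $\CA$ rather than merely in $\operatorname{Ind}(\CA)$; this is where the Noetherian hypothesis is used in an essential way, via the previous lemma applied to a single Noetherian ambient object. The full-faithfulness step is routine modulo carefully recording that $G$-invariants in characteristic zero are exact and commute with filtered colimits, so that the Hom computations on both sides agree on the dense subcategory $\CA_G$ and extend by continuity.
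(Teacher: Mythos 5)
Your proof is correct and follows essentially the same route as the paper: both construct the comparison functor from the universal property of Ind-completion (after observing that $\Ind(\CA)_G$ admits filtered colimits) and prove essential surjectivity by replacing the approximating subobjects $E_i \subset E$ with their $G$-saturations, which lie in $\CA$ by the previous lemma. The one notational caveat is that $\rho_g(E_i)$ is a subobject of $\rho_g E$ rather than of $E$, so your average should read $\sum_{g} \phi_g^{-1}(\rho_g E_i)$ (as in the paper's $\bigcup_{g}\phi_g^{-1}(gE_i)$); with that fix, and granting your routine compactness argument in place of the paper's approximation of morphisms for fullness, the two arguments coincide.
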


We refer to \cite[Lem.\ 3.6]{Perry} for a parallel result for $\infty$-categories.

\begin{proof}
If $\CA$ is cocomplete
and $(E_i, \phi_i)$ is a direct system in $\CA_G$,
then the $\phi_i$ define a canonical $G$-linearization on
$E = \lim E_i$. Hence $\CA_G$ is also cocomplete.

Let $\CA$ now be Noetherian. Applying the above argument to $\Ind(\CA)$ we see that $\Ind(\CA)_G$ is cocomplete.
Hence by the universal property of Ind-completion,
the inclusion $\CA_G \to \Ind(\CA)_G$ lifts
to a functor $\Ind(\CA_G) \to \Ind(\CA)_G$.
By composing with the forgetful functor $\Ind(\CA)_G \to \Ind(\CA)$ one sees the functor is faithful.
We check that the functor is essentially surjective and full:
Let $(E,\phi) \in \Ind(\CA)_G$ where $E = \bigcup_{i} E_i$ is a union of objects $E_i$ in $\CA$.
By replacing $E_i$ by $\bigcup_{g \in G} \phi_g^{-1}( g E_i )$ if necessary we get that the restrictions $\phi_g|_{E_i} \colon E_i \to g E_i$
define $G$-linearizations on $E_i$. Moreover, after replacing the $E_i$ and $F_i$ suitably, any morphism $(E,\phi) \to (F,\psi)$ is
the limit of a morphism $(E_i, \phi_i) \to (F_i, \psi_i)$.
\end{proof}

\begin{proof}[Proof of Proposition~\ref{prop:M=N}]
Since $\mathfrak{M}_{\CA} = \CN_{\mathrm{Ind}(\CA)}$
we have that $\mathfrak{M}_{\CA}^G(\Spec R)$ is the groupoid of pairs of $x \in \CN_{\CA}(R)$ together with linearizations $\phi_g \colon x \to gx$ satisfying the cocycle condition.
Spelling this out this is the groupoid of triples of objects $E \in \Ind(\CA)$, homomorphisms $\sigma \colon R \to \End(E)$ and linearizations $\phi_g \colon E \to gE$ satisfying
\[ \phi_g \circ \sigma_r = g \sigma_r \circ \phi_g, \]
or equivalently, 
%Equivalently, this is 
the groupoid of pairs $(E,\phi) \in \Ind(\CA)_G$ and $\sigma \colon R \to \End_{\Ind(\CA)_G}(E,\phi)$.
However, $G$ finite implies that $\Ind(\CA)_G = \Ind(\CA_G)$ (see Lemma~\ref{indG=Gind})
and hence this is precisely the groupoid $\CN_{\mathrm{Ind}(\CA_G)}(\Spec R)$.
\end{proof}

A stability condition $\sigma = (\CA,Z)$ is called \textit{algebraic} if $Z(K(\CA)) \subset \BQ+ i \BQ$. 

\begin{thm} \label{thm:existence_gms}
In the above situation assume moreover that
$\sigma$ is algebraic and that  $\CM_{\sigma}(v)$ is bounded for every $v \in K(D(X))$.
Then for every $v' \in K(D^b(X)_G)$ the moduli stack $\CM_{\sigma_G}(v')$
is an universally closed Artin stack of finite type over $\BC$ which
has a proper good moduli space. The inclusion
$\CM_{\sigma_G}(v') \to \mathfrak{M}^G$
is an open embedding.
\end{thm}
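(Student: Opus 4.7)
The plan is to deduce all properties of $\CM_{\sigma_G}(v')$ from the corresponding properties of $\CM_\sigma(v)$ by transferring them along the representable affine forgetful morphism $\epsilon \colon \mathfrak{M}^G \to \mathfrak{M}$, and then restricting to the clopen piece indexed by $v'$.

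First I would establish the open embedding. The generic flatness hypothesis on $\CA$ makes $\mathfrak{M}_\CA \hookrightarrow \mathfrak{M}$ open, and the standard openness of semistability (available under the algebraicity and boundedness hypotheses) makes $\CM_\sigma(v) \hookrightarrow \mathfrak{M}_\CA$ open as well. Applying Remark~\ref{rmk:fixed stack base change} in sequence gives open immersions
\[ \CM_\sigma(v)^G \hookrightarrow (\mathfrak{M}_\CA)^G \hookrightarrow \mathfrak{M}^G. \]
By Proposition~\ref{prop_fixed_stack_Semistable} the stack $\CM_\sigma(v)^G$ decomposes as the disjoint union of the $\CM_{\sigma_G}(v')$ over lifts $v'$ of $v$, so each $\CM_{\sigma_G}(v')$ is clopen in $\CM_\sigma(v)^G$ and in particular open in $\mathfrak{M}^G$, yielding the last claim.

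For the algebraic structure, I would invoke Proposition~\ref{algebraicity_of_fixed_stack}: since $\mathfrak{M}$ has affine diagonal, $\epsilon$ is representable and affine. The hypotheses of the theorem are exactly those of the existence result of \cite{AHLH}, so $\CM_\sigma(v)$ admits a proper good moduli space $\pi \colon \CM_\sigma(v) \to M_\sigma(v)$, and in particular $\CM_\sigma(v)$ is of finite type. The restriction $\epsilon|_{\CM_\sigma(v)^G}$ is then representable, affine and of finite presentation, so $\CM_\sigma(v)^G$ is of finite type over $\BC$, and the same holds for its open substack $\CM_{\sigma_G}(v')$.

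The main step is the construction of the proper good moduli space. Since $\epsilon|_{\CM_\sigma(v)^G}$ is representable, affine and universally closed (as $G$ is finite), it is in fact finite, and hence the composition $\pi \circ \epsilon$ is cohomologically affine over $M_\sigma(v)$. Alper's descent principle for good moduli spaces along affine morphisms then produces a good moduli space $\CM_\sigma(v)^G \to N$ with $N = \Spec_{M_\sigma(v)}((\pi \circ \epsilon)_\ast \CO)$, which is finite over the proper scheme $M_\sigma(v)$ and therefore proper. Restricting to the clopen $\CM_{\sigma_G}(v') \subset \CM_\sigma(v)^G$ one obtains a good moduli space morphism $\CM_{\sigma_G}(v') \to N_{v'}$ with $N_{v'}$ a clopen subscheme of $N$, again proper. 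Universal closedness of $\CM_{\sigma_G}(v')$ is then automatic from the composition of the (universally closed) good moduli space morphism with the proper map to $\Spec\BC$.

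The principal obstacle is the descent step: one must verify both that restriction of a good moduli space morphism to a clopen substack again yields a good moduli space (this reduces to the fact that good moduli spaces are Zariski-local on the base) and that descent along the affine morphism $\epsilon$ behaves well, where finiteness of $\epsilon$ is crucial in order to conclude properness of $N$. A more direct alternative would be to apply \cite{AHLH} to the equivariant heart $\CA_G$ via Proposition~\ref{prop:M=N}; this requires checking the Noetherian, generic flatness, algebraicity and boundedness conditions for $(\CA_G, \sigma_G)$, all of which transfer from $\CA$ by Lemma~\ref{indG=Gind}, Lemma~\ref{Lemma_stability} and the fact that $p$ is exact and finite, but the descent route above is more efficient.
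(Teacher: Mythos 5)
Your first two steps (the open embedding via Remark~\ref{rmk:fixed stack base change} and Proposition~\ref{prop_fixed_stack_Semistable}, and finite type via the affineness of $\epsilon$ from Proposition~\ref{algebraicity_of_fixed_stack}) agree with the paper. The main step, however, contains a genuine error: the claim that $\epsilon|_{\CM_\sigma(v)^G}$ is universally closed ``as $G$ is finite'', hence finite, is false. Finiteness of the group does not make the classifying morphism of the fixed stack proper: a family of $G$-linearized semistable objects over a punctured disc whose underlying family extends across the puncture need not have a $G$-invariant (let alone linearizable) limit. The paper points this out explicitly in a footnote to its own proof and exhibits a counterexample in Section~\ref{subsec:dual action}, where for the dual involution $\mathsf{Q}$ on $D^b(S')$ the skyscraper sheaves $\BC_{x_t}$ with $x_t \to x_0 \in E_i$ are $\mathsf{Q}$-invariant for $x_t \notin E_i$ but $\BC_{x_0}$ is not, so $\epsilon$ fails the existence part of the valuative criterion. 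Once finiteness is lost, your descent step only produces $N = \Spec_{M_\sigma(v)}\bigl((\pi\circ\epsilon)_\ast\CO\bigr)$ as an affine (finite-type) scheme over the proper $M_\sigma(v)$, which is not enough to conclude that $N$, or $\CM_{\sigma_G}(v')$, is proper or universally closed. This is precisely the point the theorem has to work for.

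The alternative you mention in your last sentence and set aside as ``less efficient'' is in fact essentially the paper's argument, and it is the route that works. The paper uses affineness of $\epsilon$ only for the cheap properties ($\Theta$-reductivity and S-completeness transfer along affine morphisms by \cite[Prop.\ 3.20, 3.42]{AHLH}, giving a separated good moduli space via \cite[Thm.\ A]{AHLH}), and then establishes universal closedness intrinsically: by Proposition~\ref{prop:M=N} one has $(\mathfrak{M}_\CA)^G \cong \CN_{\mathrm{Ind}(\CA_G)}$, so \cite[Lem.\ 7.17]{AHLH} gives the existence part of the valuative criterion for the ambient fixed stack, the $\Theta$-stratification of $\mathfrak{M}_{\CA,v}$ pulls back along the affine $\epsilon$ to one of $(\mathfrak{M}_{\CA,v})^G$ with open piece the $\sigma_G$-semistable locus, and semistable reduction \cite[Thm.\ B/C]{AHLH} then yields universal closedness and hence properness of the good moduli space. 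The essential content is that one must run the degeneration argument inside the equivariant abelian category $\CA_G$ rather than trying to extend linearizations of limits taken in $\CA$; your proposal as written skips exactly this.
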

\begin{proof}
%Let us first assume that the invariant stability condition $\sigma=(\CA,Z)$ is algebraic. 
Let $v = p_{\ast} v'$ and let $\mathfrak{M}_{\CA, v} \subset \mathfrak{M}_{\CA}$ be the open and closed substack parametrizing objects of class $v$.
Invoking \cite[Ex.\ 7.27]{AHLH}, the stack $\mathfrak{M}_{\CA,v}$ has a $\Theta$-stratification whose open piece is $\CM_{\sigma}(v)$.
This yields the fiber diagram
\[
\begin{tikzcd}
\CM_{\sigma}(v)^G \ar[hookrightarrow]{r} \ar{d}{\epsilon} & (\mathfrak{M}_{\CA,v})^G\ar{d}{\epsilon} \\
\CM_{\sigma}(v) \ar[hookrightarrow]{r} & \mathfrak{M}_{\CA,v},
\end{tikzcd}
\]
where the horizontal maps are open immersions.
Since $\mathfrak{M}_{\CA,v} \subset \mathfrak{M}$ is open
and $\mathfrak{M}$ is an Artin stack locally of finite type with affine diagonal over $\BC$,
applying Proposition~\ref{algebraicity_of_fixed_stack} the same holds for $(\mathfrak{M}_{\CA,v})^G$. Moreover, by Proposition~\ref{algebraicity_of_fixed_stack} again both vertical morphisms $\epsilon$ are affine.
Since $\CM_{\sigma}(v)$ is of finite type, so is $\CM_{\sigma}(v)^G$.

By \cite[Sec.\ 7]{AHLH} the stack $\CM_{\sigma}(v)$ is $\Theta$-reductive and S-complete.
By \cite[Prop.\ 3.20(1)]{AHLH} affine morphisms are $\Theta$-reductive and by \cite[Prop.\ 3.42(1)]{AHLH} they are $S$-complete.
Since both these properties are stable under composition, $\CM_{\sigma}(v)^G$ is $\Theta$-reductive and $S$-complete
and hence by \cite[Thm.\ A]{AHLH} admits a separated good moduli space.

It remains to show that $\CM_{\sigma}(v)^G$ is universally closed.\footnote{
Since $\epsilon$ is not proper in general (see Section~\ref{subsec:dual action} for an example where this fails) this does not follow directly from the fact that $\CM_{\sigma}(v)$ is universally closed.
Instead we use the alternative description of the bigger stack $(\mathfrak{M}_{\CA})^G$.}
For this recall from Proposition~\ref{prop:M=N} the isomorphism $(\mathfrak{M}_{\CA})^G \cong \CN_{\mathrm{Ind}(\CA)}$.
It follows from \cite[Lem.\ 7.17]{AHLH} that $\mathfrak{M}_{\CA}^G$ satisfies the existence
part of the valuative criterion of properness.
Since $\epsilon \colon (\mathfrak{M}_{\CA,v})^G \to \mathfrak{M}_{\CA,v}$ is affine,
by \cite[Prop.\ 1.19]{DHL1} the preimage of the $\Theta$-stratification of $\mathfrak{M}_{\CA,v}$ defines a $\Theta$-stratification of $(\mathfrak{M}_{\CA,v})^G$.
By definition its open piece is the preimage of the stack of $\sigma$-semistable objets, which, %by Lemma~\ref{Lemma_stability},
is precisely the stack of $\sigma_G$-semistable objects.\footnote{
The $\Theta$-stratification of $\mathfrak{M}_{\CA,v}$ corresponds to the Harder--Narasimhan filtration in $\CA$.
Given an equivariant object $(E,\phi)$ and a Harder--Narasimhan filtration $E_i$ of $E$ with respect to $\sigma$ the restrictions $(E_i, \phi|_{E_i})$ define a Harder--Narasimhan filtration of $(E,\phi)$
which corresponds to the 'preimage' $\Theta$-stratification of $(\mathfrak{M}_{\CA})^G$.}
By semistable reduction \cite[Thm.\ B/C]{AHLH} we conclude that $\CM_{\sigma}(v)^G$ is universally closed and therefore that its good moduli space is proper.
By Proposition~\ref{prop_fixed_stack_Semistable} the stack $\CM_{\sigma_G}(v')$ is a closed and open substack of $\CM_{\sigma}(v)^G$,
hence it satisfies the same conclusion.
\end{proof}

We consider the deformation-obstruction theory of the functor $\mathfrak{M}_{\CA}^G$.
\begin{prop} \label{def_obst_thy}
Suppose that $\CA$ is Noetherian, satisfies the generic flatness property
and we have $D^b(\CA) \cong D^b(X)$.

Let $0 \to I \to A' \to A \to 0$ be a square zero extension of rings and let 
$\iota \colon X \times \Spec A \to X \times \Spec A'$
be the natural inclusion.
Let $(E,\phi) \in \mathfrak{M}_{\CA}^G(\Spec A)$.
Then there exists an obstruction class
\[ \omega(E,\phi) \in \Ext^2(E,E \otimes I)^G_0 \]
which vanishes if and only if there exists a complex $(E', \phi') \in \mathfrak{M}_{\CA}^G(A')$
%D(X \times \Spec A')_G$
such that $\iota^{\ast}(E',\phi') \cong (E,\phi)$.
Moreover, in this case the set of extensions is a torsor over $\Ext^1(E,E \otimes I)^G$.
\end{prop}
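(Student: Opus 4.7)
The plan is to reduce the equivariant deformation-obstruction theory to the classical (non-equivariant) one for Lieblich's stack $\mathfrak{M}$ and then take $G$-invariants, using the fact that, since $|G|$ is invertible in $\BC$, the group cohomology $H^i(G, V)$ of any $\BC[G]$-module vanishes for $i > 0$.

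First I would invoke the standard deformation theory of universally gluable perfect complexes. Under the hypothesis $D^b(\CA) \cong D^b(X)$ this applies to $E$: there is a functorial obstruction class $\omega(E) \in \Ext^2(E, E \otimes I)_0$ whose vanishing is equivalent to the existence of some lift $E' \in \mathfrak{M}_{\CA}(\Spec A')$ of $E$, and, when non-empty, the set of such lifts is a torsor over $\Ext^1(E, E \otimes I)$. By functoriality, every equivalence $\rho_g$ sends $\omega(E)$ to $\omega(\rho_g E)$; combined with the isomorphisms $\phi_g \colon E \to \rho_g E$ provided by the linearization this forces $\omega(E)$ to lie in the $G$-invariant subspace. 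I therefore set $\omega(E, \phi) = \omega(E) \in \Ext^2(E, E \otimes I)^G_0$.

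The key step is the converse: assuming $\omega(E, \phi) = 0$, I produce a $G$-equivariant lift. Choose any lift $E'$ of $E$. For each $g \in G$ the complex $\rho_g E'$ is a lift of $\rho_g E$, and pulling back along $\phi_g$ identifies it with another lift of $E$; the difference with $E'$ defines a class $c_g \in \Ext^1(E, E \otimes I)$ in the torsor of lifts. A direct diagram chase using the cocycle relation \eqref{compatibility} satisfied by $\phi$ and the associativity \eqref{associativity} of the Fourier--Mukai action shows that $g \mapsto c_g$ is a $1$-cocycle for the natural $\BC[G]$-module structure on $\Ext^1(E, E \otimes I)$. Since $H^1(G, \Ext^1(E, E \otimes I)) = 0$, the cocycle is a coboundary $c_g = \rho_g \eta - \eta$ for some $\eta$. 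Modifying $E'$ by $\eta$ inside the torsor yields a new lift $\tilde E'$ for which the discrepancy vanishes; the $\phi_g$ then extend canonically to isomorphisms $\phi'_g \colon \tilde E' \to \rho_g \tilde E'$, and verifying the associativity \eqref{compatibility} for $\phi'$ reduces to the already-known associativity for $\phi$, again via a short Čech-style computation that uses the uniqueness of lifts of morphisms modulo $\Hom(E, E\otimes I)$.

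For the final torsor statement I would observe that, once one equivariant lift $(\tilde E', \phi')$ exists, any other equivariant lift differs from it by an element of $\Ext^1(E, E \otimes I)$ that is fixed by the induced $G$-action, i.e.\ by an element of $\Ext^1(E, E \otimes I)^G = \Ext^1_{D^b(X)_G}((E, \phi), (E, \phi) \otimes I)$; conversely every $G$-invariant element gives an equivariant lift, using once more the vanishing of $H^1(G, -)$ to upgrade its compatibility with $\phi$ to an honest $G$-linearization. The main obstacle throughout is the cocycle verification in the middle paragraph, namely checking that the classes $c_g$ genuinely satisfy the $1$-cocycle relation and that the resulting $\phi'$ satisfies \eqref{compatibility}; both amount to careful bookkeeping of the natural transformations $\theta_{g,h}$ of Section~\ref{sec:equiv_cat} under the classical deformation-theoretic torsor action, and no new ideas beyond those already present in the non-equivariant theory are required.
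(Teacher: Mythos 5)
Your argument is correct in substance, but it follows a genuinely different route from the paper. The paper does not touch cocycles at all: it invokes Proposition~\ref{prop:M=N} to identify $(\mathfrak{M}_{\CA})^G$ with the Artin--Zhang functor $\CN_{\mathrm{Ind}(\CA_G)}$ of the equivariant abelian category, observes that $\Ext^i_{D^b(\CA_G)}((E,\phi),(E,\phi)) = \Ext^i_{D^b(X)}(E,E)^G$, and then quotes Lowen's obstruction theory for objects in abelian categories, which hands over the obstruction class and the torsor statement already living in the invariant Ext groups; the traceless refinement comes from smoothness of the Picard stack. You instead run the non-equivariant deformation theory of $E$ in $\mathfrak{M}$ and descend: $G$-invariance of $\omega(E)$, then an $H^1(G,\Ext^1(E,E\otimes I))=0$ argument to average a lift into an invariant one, then lifting the linearization. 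Your approach is more elementary and makes visible exactly where $|G|$ being invertible enters, but it carries the real work in the cocycle bookkeeping, and one point is understated: the isomorphisms $\phi'_g\colon \tilde E' \to \rho_g\tilde E'$ do \emph{not} extend canonically --- each is determined only up to $\Hom(E,E\otimes I)$, and the relation \eqref{compatibility} for a given choice can fail by a $2$-cocycle valued in $\Hom(E,E\otimes I)$, which you must kill using $H^2(G,\Hom(E,E\otimes I))=0$ (a second, separate application of group-cohomology vanishing beyond the $H^1$ you cite). With that step made explicit, and with the remark that any lift of $E$ over the nilpotent thickening automatically stays in the open substack $\mathfrak{M}_{\CA}$, your proof is complete; it trades the paper's reliance on the $\CN_{\mathrm{Ind}(\CA_G)}$ identification and Lowen's general theorem for a hands-on descent argument.
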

Here the subscript $0$ stands for the traceless part defined by
\[ \Ext^2(E,E)_0 = \mathrm{Ker} \left( \mathrm{Tr} \colon \Ext^2(E,E) \to H^2(X,\CO_X) \right). \]

\begin{proof}
By Proposition~\ref{prop:M=N} we can use the deformation theory of the Artin--Zhang functor $\CN_{\mathrm{Ind}(\CA)}$.
Since $D^b(\CA) = D^b(X)$ for any $(E,\phi) \in \CA_G$ we have
\[ \Ext^i_{D^b(\CA_G)}( (E,\phi), (E,\phi) ) = \Ext^i_{D^b(X)_G}( (E,\phi), (E,\phi) ) = \Ext^i_{D^b(X)}(E,E)^G. \]
Hence the existence of the obstruction class $\omega(E,\phi) \in \Ext^2(E, E \otimes I)^G$ follows from \cite{Lowen}.
%and that 
%This follows from \cite{Lowen} and that if $(E,\phi) \in \CA_G$ then
The ($G$-invariant) trace map is the derivative to the determinant map on $S$.
Since the Picard stack is smooth, all obstructions to deforming $\det(E)$ vanishes.
This shows that the obstruction class lies in the kernel of
\[ \Ext^2(E,E)^G \xrightarrow{p_{\ast}} \Ext^2(E,E) \xrightarrow{\mathrm{Tr}} \BC. \qedhere \]
\end{proof}

\subsection{Conclusion} \label{sec:summary}
Let $X$ be a smooth projective variety and let
$\Stab^\ast(X) \subset \Stab(X)$
be a connected component of the stability manifold satisfying the following condition:
%\begin{equation}
%\text{There exists an algebraic stability conditions $\sigma = (\CA,Z)$ such that}
%\begin{itemize}
%\item $\CA$ satisfies the 'generic flatness property', and
%\item for all $v \in K(\CA)$ the stack $\CM_\sigma(v)$ is bounded.
%\end{itemize}
%\label{\dagger}
%\end{equation}

\begin{enumerate}
\item[($\dagger$)] There exists an algebraic stability conditions $\sigma = (\CA,Z) \in \Stab^{\ast}(X)$ such that 
\begin{itemize}
\item $\CA$ satisfies the generic flatness property and
\item for all $v \in K(\CA)$ the stack $\CM_\sigma(v)$ is bounded.
\end{itemize}
\end{enumerate}

Then by \cite[Prop.\ 4.12]{PT} the same holds for all algebraic stability conditions in $\Stab^\ast(X)$.
Moreover, as explained in \cite[Ex.\ 7.27]{AHLH}, for any $v \in K(D^b(X))$ and stability condition $\sigma \in \Stab^{\ast}(X)$
one can find an algebraic stability condition $\sigma'$ such that $\CM_{\sigma}(v)$ and $\CM_{\sigma'}(v)$ define the same moduli functor.

Assume as before that we have a $G$-action on $D^b(X)$.
We will need the following $G$-invariant version of the argument in \cite[Ex.\ 7.27]{AHLH}.

\begin{lemma}\label{lem:StabCondtoAlgebraicinGinvariant}
Let $v\in K(D^b(X))^G$ and $\sigma \in \Stab^\ast(X)^G$.
Then there exists an algebraic stability condition $\sigma'\in \Stab^\ast(X)^G$, such that $\CM_{\sigma}(v)$ and $\CM_{\sigma'}(v)$ define the same moduli functor.
\end{lemma}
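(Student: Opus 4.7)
The plan is to mimic the argument in \cite[Ex.\ 7.27]{AHLH}, which perturbs $Z$ to a rational central charge on the same heart $\CA$ without disturbing the semistability of objects of class $v$, and then to verify that the perturbation can be carried out inside the $G$-invariant subspace of $\Stab^{\ast}(X)$.

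First I would fix $\sigma=(\CA,Z)\in\Stab^{\ast}(X)^G$ and consider the set $S\subset K(\CA)$ of classes that arise as Jordan--H\"older factors of $\sigma$-semistable objects of class $v$. Since $\sigma$ satisfies the support property, and by boundedness of $\CM_{\sigma}(v)$, the set $S$ is contained in a bounded region of the lattice $K(\CA)$ with respect to a norm, and in particular only finitely many of its slope relations $\arg Z(v_1)\lessgtr\arg Z(v_2)$ (and coincidences of slopes) enter into the notion of $\sigma$-semistability for class $v$. Hence there is an open neighborhood $U$ of $Z$ in $\Hom(K(\CA),\BC)$ such that for every $Z'\in U$ with $(\CA,Z')$ a stability condition, the set of $\sigma$-semistable objects of class $v$ agrees with the set of $(\CA,Z')$-semistable objects.

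Next I would produce an algebraic, $G$-invariant $Z'\in U$. The group $G$ acts on $K(\CA)$ by $\rho_{g,\ast}$, and since $\sigma$ is $G$-fixed we have $Z\circ \rho_{g,\ast}^{-1}=Z$, i.e.\ $Z$ factors through the $G$-coinvariants $K(\CA)_G$. The $G$-invariant central charges form a $\BQ$-linear subspace $V\subset\Hom(K(\CA),\BC)$ with $V_{\BQ}=V\cap\Hom(K(\CA),\BQ+i\BQ)$ dense in $V$. Choosing $Z'\in V_{\BQ}\cap U$ therefore produces a $G$-invariant rational central charge close to $Z$. By the deformation theorem for stability conditions \cite{BridgelandStab}, for $Z'$ sufficiently close to $Z$ the pair $\sigma'=(\CA,Z')$ is a stability condition, necessarily in the same connected component $\Stab^{\ast}(X)$, and clearly still $G$-fixed and algebraic.

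Finally, by the choice of $U$ the moduli functors $\CM_{\sigma}(v)$ and $\CM_{\sigma'}(v)$ agree: a $T$-flat family $\CE\in D(X\times T)$ has every geometric fiber $\sigma$-semistable of class $v$ iff it has every geometric fiber $\sigma'$-semistable of class $v$, because the $\sigma$- and $\sigma'$-semistability conditions on an object of class $v$ differ only through the slope comparisons of classes in $S$, which are preserved on $U$. The main subtlety I anticipate is the density of $V_{\BQ}$ in $V$: this requires that the $G$-action on $K(\CA)$ admits a $\BQ$-structure compatible with the real structure, but since $K(\CA)$ is a finitely generated abelian group with a $\BZ$-linear $G$-action, $V$ is cut out from $\Hom(K(\CA),\BC)$ by $\BZ$-linear equations and therefore has a dense $\BQ$-form, so this step goes through without trouble.
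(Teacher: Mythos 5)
Your second step (producing a $G$-invariant rational central charge) is sound: the invariance condition $Z\circ\rho_{g,\ast}=Z$ is $\BZ$-linear on $\Hom(K(\CA),\BC)$, so the invariant subspace has a dense $\BQ$-form, exactly as you say. The gap is in the first step. You claim there is an \emph{open} neighborhood $U$ of $Z$ in $\Hom(K(\CA),\BC)$ on which the set of $\sigma$-semistable objects of class $v$ is unchanged. This is false as soon as $\sigma$ lies on a wall for $v$: if $E=E_1\oplus E_2$ with $E_i$ stable of classes $v_1\neq v_2$ not proportional to each other and $\arg Z(v_1)=\arg Z(v_2)$, then $E$ is $\sigma$-semistable, but for a generic nearby $Z'$ one has $\arg Z'(v_1)\neq\arg Z'(v_2)$ and $E$ is destabilized by one of its summands. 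You even note that "coincidences of slopes" enter into semistability, but preserving a coincidence is a closed condition, so it cannot be guaranteed on an open neighborhood. The lemma cannot be reduced to the wall-free case: it is invoked in the proof of Theorem~\ref{thm:existence gms} for arbitrary classes $v'$, where strictly semistable objects genuinely occur.

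The paper's proof instead follows \cite[Ex.\ 7.27]{AHLH}: one works with the locally closed stratum $\CC_{S'}$ of $\Stab^{\ast}(X)$ on which exactly the classes in $S'$ occur as destabilizing factors (equivalently, the relevant slope coincidences are preserved and no new ones appear), restricts this decomposition to the submanifold $\Stab^{\ast}(X)^G$, and produces an algebraic stability condition inside $\CC_{S'}\cap\Stab^{\ast}(X)^G$. To repair your argument you would have to find a rational $G$-invariant point not merely in an open set but on the intersection of the walls $\{\,\Im\bigl(Z(\gamma')\overline{Z(v)}\bigr)=0\,\}$ for $\gamma'\in S'$; these conditions are quadratic in $Z$, so density of the $\BQ$-form of a linear subspace does not suffice, and this is precisely the content of the AHLH argument that your sketch bypasses.
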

\begin{proof}
We follow the arguments and notations from \cite[Ex.\ 7.27]{AHLH}. Note also that the arguments from \cite[Lem.\ 2.15]{MMS} apply in our setting. 
We restrict the decomposition of \cite{AHLH}
\[
\CC_{S'}=\left( \bigcup_{\gamma' \in S'} \CW_{\gamma'} \right) \setminus \bigcup_{\gamma' \not \in S'} \CW_{\gamma'}
\]
associated to $v$ and $\sigma$ to the set of invariant stability conditions $\Stab^\ast(X)^G$.
Since we have $\sigma \in \CC_{S'}$, we conclude
for all $\gamma' \not \in S'$ that the connected component of the submanifold $\Stab^\ast(X)^G$ containing $\sigma$ is not entirely contained in $\CW_{\gamma'}$.
Then arguing as in \cite[Ex.\ 7.27]{AHLH} for $\CC_{S'} \cap \Stab^*(X)^G$ completes the proof.
%Applying the arguments from \cite[Ex.\ 7.27]{AHLH} to $\CC_{S'} \cap \Stab^*(X)^G$ yields an algebraic stability condition $\sigma'\in \Stab^*(X)^G$ with the desired property.
\end{proof}

This yields the following existence result.
\begin{thm} \label{thm:existence gms}
%Let $v\in K(D^b(X))^G$ and $\sigma \in \Stab^\ast(X)^G$.
Let $\sigma \in \Stab^{\ast}(X)$ be a $G$-fixed stability condition. Then for 
every $v' \in K(D^b(X)_G)$ the stack $\CM_{\sigma_G}(v')$
%of equivariant $\sigma_G$-semistable objects of class $v'$
is a universally closed Artin stack of finite type over $\BC$ which
has a proper good moduli space.
\end{thm}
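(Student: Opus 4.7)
The plan is to reduce the statement to Theorem~\ref{thm:existence_gms}, which already establishes the desired conclusion when the ambient stability condition is algebraic, its heart is Noetherian and satisfies generic flatness, and produces bounded moduli stacks. First I would set $v = p_\ast v' \in K(D^b(X))^G$. Since $\sigma$ lies by hypothesis in the invariant locus $\Stab^\ast(X)^G$, Lemma~\ref{lem:StabCondtoAlgebraicinGinvariant} supplies an algebraic $G$-invariant stability condition $\sigma' \in \Stab^\ast(X)^G$ for which $\CM_\sigma(v)$ and $\CM_{\sigma'}(v)$ define the same moduli functor, and hence agree as open substacks of Lieblich's stack $\mathfrak{M}$.

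Since $\mathfrak{M}$ carries a canonical $G$-action (Section~\ref{subsec:moduli_of_equ}) and both $\CM_\sigma(v)$ and $\CM_{\sigma'}(v)$ are $G$-invariant open substacks with identical underlying groupoids, the restricted $G$-actions on this common substack coincide. Taking fixed stacks and applying Proposition~\ref{prop_fixed_stack_Semistable} to both sides yields, component-wise in $v'$, isomorphisms $\CM_{\sigma_G}(v') \cong \CM_{\sigma'_G}(v')$ for every $v'$ with $p_\ast v' = v$. It therefore suffices to establish the conclusion for $\CM_{\sigma'_G}(v')$.

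Next I would apply Theorem~\ref{thm:existence_gms} to $\sigma'$. Its hypotheses are ensured by the algebraicity of $\sigma'$ together with membership in $\Stab^\ast(X)$: by condition $(\dagger)$ combined with \cite[Prop.\ 4.12]{PT} the heart of $\sigma'$ is Noetherian, satisfies generic flatness, and every moduli stack $\CM_{\sigma'}(w)$ is bounded. Theorem~\ref{thm:existence_gms} then gives that $\CM_{\sigma'_G}(v')$ is a universally closed Artin stack of finite type over $\BC$ admitting a proper good moduli space, and the isomorphism of the previous paragraph transfers this conclusion to $\CM_{\sigma_G}(v')$.

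The step that I expect to need the most care is verifying the $G$-equivariance of the identification $\CM_\sigma(v) = \CM_{\sigma'}(v)$ produced by Lemma~\ref{lem:StabCondtoAlgebraicinGinvariant}. This is essentially tautological once one regards both stacks as open substacks of the single $G$-stack $\mathfrak{M}$, because the $G$-action on either substack is simply the restriction of the ambient action and therefore depends only on the substack, not on the stability condition used to cut it out. Nevertheless it is the one place where the reduction forces one to read Lemma~\ref{lem:StabCondtoAlgebraicinGinvariant} as an equality of substacks rather than as an abstract equivalence of moduli functors; bookkeeping sloppiness there would obstruct the whole argument.
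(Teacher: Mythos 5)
Your proposal is correct and follows the paper's own argument: the paper's proof is precisely the reduction via Lemma~\ref{lem:StabCondtoAlgebraicinGinvariant} to an algebraic $G$-invariant stability condition, followed by an application of Theorem~\ref{thm:existence_gms}. Your additional care about the $G$-equivariance of the identification of moduli functors (as open substacks of the single $G$-stack $\mathfrak{M}$) is a correct and worthwhile elaboration of a point the paper leaves implicit.
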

\begin{proof}
By Lemma~\ref{lem:StabCondtoAlgebraicinGinvariant} we may assume that $\sigma$ is algebraic
and apply Theorem~\ref{thm:existence_gms}.
%Hence the result follows from Theorem~\ref{thm:existence_gms}.
\end{proof}

We are ready to give a proof of Theorem~\ref{thm:summary}.
\begin{proof}[Proof of Theorem~\ref{thm:summary}]
We will assume for simplicity that $M$ is a fine moduli space.
The case of a coarse moduli space of stable objects works parallel by using a twisted universal object instead, see Remark~\ref{rmk:coarse moduli space}.
By Proposition~\ref{prop_fixed_stack_Semistable} we have the decomposition % \eqref{decompp1}.
\begin{equation} 
\CM_{\sigma}(v)^G = \bigsqcup_{p_{\ast} v' = v} \CM_{\sigma_G}(v'). \label{bbbbbvcc} %\label{decompp1}
\end{equation}
The map~\eqref{thm:summary:map} is induced from $\epsilon \colon \CM_{\sigma}(v)^G \to \CM_{\sigma}(v)$ by passing to good moduli spaces.
For every $G$-linearizable connected component $F \subset M^G$,
the scheme
$Y=\Spec\left( \oplus_{g \in G_{\mathrm{ab}}} \CL_g \right)$
as defined in \eqref{defn:etale cover 2}
is a $G^{\vee}$-torsor over $F$, see Remark~\ref{remark_Gdual_action}.
By Proposition~\ref{prop_fixed_stack_for_fine_moduli_space} 
the gerbe $Y \times  B \BG_m $ is the union of all connected components of \eqref{bbbbbvcc} mapping to $F$.
Since every connected component maps to some $F$ this shows the first claim.

If $G$ factors through a Schur cover $G \to Q$, then we have $M^G = M^Q$.
Moreover for every connected component $F$ and point $p \in F$ the obstruction of being $G$-linearizable (as given by Lemma~\ref{lem:Ploog}) is
the pullback of a class in $H^2(Q,\BC^{\ast})$ and hence vanishes.
This shows that every connected component of $M^G$ is $G$-linearizable
and so \eqref{thm:summary:map} is surjective.
\end{proof}

{\large{\part{{Equivariant categories of symplectic surfaces}}}}

\section{More on equivariant categories} \label{sec:more}

\subsection{Calabi--Yau categories} \label{subsec:calabi-yau}
The main reference for this section is \cite{Notes}.

Let $\CD$ be a $\BC$-linear triangulated category with finite-dimen\-sional $\Hom$ spaces.
A \emph{Serre functor} for $\CD$ is an equivalence $S \colon \CD \to \CD$ together with
bifunctorial isomorphisms
\[ \eta_{A,B} \colon \Hom(A,B) \xrightarrow{\cong} \Hom(B, SA)^{\vee} \]
for all objects $A,B \in \CD$.
By \cite[Sec.\ 5]{Notes} if we are given an action by a finite group $G$ on $\CD$ the Serre functor $S$ lifts to a Serre functor 
\[ \tilde{S} \colon \CD_G \to \CD_G \]
which is of the form $\tilde{S}(A,\phi) = (SA, \phi')$ for a certain linearization $\phi'$. % or equivalently $p \circ \tilde{S} = S \circ p$.
Moreover, for any objects $(A,\phi)$ and $(B, \psi)$ in $\CD_G$ the restriction of $\eta_{A,B}$ to the $G$-invariant part defines
bifunctorial isomorphisms
\[ \eta_{A,B} \colon \Hom(A,B)^G \xrightarrow{\cong} ( \Hom( B, SA )^G )^{\vee} \]
where the $G$-action on the left is defined by the linearizations $\phi, \psi$
and the $G$-action on the right is defined by the linearizations $\psi$ and $\phi'$ 

We say that the category $\CD$ is Calabi--Yau if there exists a $2$-isomorphism
\[ \id_{\CD} \xrightarrow{\cong} S[-n] \]
for some integer $n$, called the dimension of $\CD$.

\begin{rmk} \label{rmk_serrefunc}
The derived category  $D^b(X)$ of a smooth projective $n$-dimensional variety $X$
has the Serre functor $S = ( - ) \otimes \omega_X[n]$.
In this case we will usually denote the lifted functor $\tilde{S}$ also by $( - ) \otimes \omega_X[n]$
where the action on the linearization is implicitly understood. So
\[ (A,\phi) \otimes \omega_X[n] \]
will stand for $\tilde{S} (A, \phi) = (A \otimes \omega_X[n], \phi')$.
\end{rmk}
\begin{rmk}
The results discussed above work also in the relative case
of a smooth projective morphism $\pi \colon X \to T$ with geometrically connected fibers as in Section~\ref{Subsection_Geometric_case}.
Given a Fourier--Mukai $G$-action on $D(X)$, the $\pi$-relative Serre functor lifts to a $\pi$-relative Serre functor of the equivariant category $D(X)_G$.
\end{rmk}

We have the following criterion for the equivariant category of a Calabi--Yau variety to be Calabi--Yau.

\begin{prop} \textup{(}\cite[Sec.\ 6.3, 6.4]{Notes}\textup{)} \label{prop:calabi-yau}
Let $X$ be a smooth projective variety which is Calabi--Yau, i.e.\ $\omega_X \cong \CO_X$.
Consider the action of a finite group $G$ on $D^b(X)$ which lifts to an action on the dg-enhancement $D_{\mathrm{dg}}(X)$.
\begin{enumerate}
\item[(i)] If the induced action of $G$ on singular cohomology preserves the class of the Calabi--Yau form
$[\omega_X] \in H^0(X, \Omega_X^n)$, then $D^b(X)_G$ is Calabi--Yau of dimension~$n$.
\item[(ii)] Suppose that, moreover, we have an equivalence $D^b(X)_{G} \cong D^b(X')$ for a variety $X'$.
The induced action of $G^{\vee}$ on $H^{\ast}(X', \BC)$
preserves the class of $\omega_{X'}$.
\end{enumerate}
\end{prop}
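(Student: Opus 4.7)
My plan is to treat (i) and (ii) in turn, both built around the lift $\tilde S$ of the Serre functor to the equivariant category from Section~\ref{subsec:calabi-yau}.

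For (i), start with $S = (-)\otimes \omega_X[n]$ on $D^b(X)$. Because the $G$-action is realized by Fourier--Mukai kernels (Lemma~\ref{lem:FM-action}) and the Serre functor is canonical, $\omega_X$ carries a canonical $G$-linearization $\omega_X^{\mathrm{can}}$ such that $\tilde S(A,\phi) = (A,\phi)\otimes \omega_X^{\mathrm{can}}[n]$ in the sense of Remark~\ref{rmk_serrefunc}. The hypothesis that $G$ fixes $[\omega_X] \in H^0(X,\Omega_X^n)$ amounts exactly to the induced $G$-action being trivial on the one-dimensional space $H^0(X,\omega_X)$, i.e.\ to the existence of a nowhere-vanishing $G$-equivariant section $s\colon \CO_X \to \omega_X$. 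Such an $s$ realizes an isomorphism of equivariant objects $(\CO_X,\mathrm{triv}) \xrightarrow{\cong} \omega_X^{\mathrm{can}}$ in $D^b(X)_G$, and tensoring produces a 2-isomorphism of functors $[n] \xrightarrow{\cong} \tilde S$ on $D^b(X)_G$. Shifting by $[-n]$ yields the Calabi--Yau 2-iso $\id \xrightarrow{\cong} \tilde S[-n]$.

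For (ii), the next step is to transport this structure across $\Phi\colon D^b(X)_G \xrightarrow{\cong} D^b(X')$. Since any equivalence of triangulated categories commutes with Serre functors, $D^b(X')$ inherits a Calabi--Yau structure of dimension $n$; comparing with its intrinsic Serre functor $(-)\otimes \omega_{X'}[\dim X']$ forces $\dim X' = n$ and $\omega_{X'}\cong \CO_{X'}$, and the transport of the 2-iso $\alpha$ from (i) picks out a distinguished nowhere-vanishing section $s' \in H^0(X',\omega_{X'})$. The central claim is that the $G^\vee$-action on $D^b(X)_G$ preserves $\alpha$. Indeed, each $\chi \in G^\vee$ acts by $(A,\phi)\mapsto (A,\chi\phi)$ and by the identity on morphisms, while after the identification $\tilde S \cong [n]$ from (i) the morphism $\alpha_{(A,\phi)}$ is literally $\id_A$; the naturality square expressing $\chi \alpha = \alpha \chi$ then commutes trivially. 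Transporting along $\Phi$, the induced $G^\vee$-action on $D^b(X')$ preserves the canonical CY 2-iso, hence fixes $s'$. Via the Hodge inclusion $H^{n,0}(X') \hookrightarrow H^*(X',\BC)$, this gives the statement.

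The main technical care is in setting up the canonical $G$-linearization on $\omega_X$ so that the 2-iso $\alpha$ constructed in (i) is manifestly natural, and in verifying that $\Phi$ transports $\alpha$ to the canonical CY trivialization on $D^b(X')$ rather than a scalar multiple. The dg-enhancement hypothesis enters through Proposition~\ref{prop_dgenhancement} precisely to ensure that $\tilde S$ and its transport along $\Phi$ are defined canonically at the level of functors, so that the triviality of the naturality square for $\chi$ translates faithfully into triviality of the $G^\vee$-action on the distinguished section $s'$.
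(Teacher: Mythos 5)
The paper does not prove this proposition itself; it is quoted from \cite[Sec.\ 6.3, 6.4]{Notes}. Your argument follows what is essentially the intended route of that reference: lift the Serre functor to $\tilde S$ on $D^b(X)_G$, trivialize $S[-n]$ $G$-invariantly for (i), and for (ii) use that $G^{\vee}$ acts by the identity on morphisms and hence fixes the resulting Calabi--Yau trivialization. Two points need attention before the argument closes.

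First, the step you describe as ``amounts exactly to'' is the real mathematical content of both parts and is not automatic. The hypothesis concerns the action on singular cohomology $H^{n,0}(X)\subset H^n(X,\BC)$ induced by the cohomological Fourier--Mukai transforms of the kernels $\CE_g$, whereas your construction needs triviality of the conjugation action of $G$ on the one-dimensional space $\Hom(\id_{D^b(X)},S[-n])$ (and, in (ii), the analogous identification on $X'$). These two actions do agree, but only after identifying $\Hom(\id,S[-n])$ with $H^0(X,\Omega_X^n)$ inside Hochschild homology via Hochschild--Kostant--Rosenberg and invoking the compatibility of Fourier--Mukai functors with Hochschild homology; the $\sqrt{\td}$ correction is harmless precisely because one is on the extreme Hodge piece. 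Without this comparison neither direction of your argument goes through, so it must be supplied rather than asserted. Second, the intermediate object $\omega_X^{\mathrm{can}}$ is not well formed as stated: a general $\rho_g$ is a derived equivalence that need not fix $\CO_X$ nor commute with tensor products, so there is no canonical $G$-linearization of the \emph{object} $\omega_X$, and $(E,\phi)\otimes\omega_X^{\mathrm{can}}$ is undefined (compare the caveat at the start of Section~\ref{Subsection_Geometric_case} and Remark~\ref{rmk_serrefunc}, where $(A,\phi)\otimes\omega_X[n]$ is only notation for $\tilde S$). What exists canonically is the system of $2$-isomorphisms $S\rho_g\cong\rho_g S$, i.e.\ a linearization of the \emph{functor} $S[-n]$; the argument should therefore be phrased as producing a $G$-invariant element of $\Hom(\id,S[-n])$, which is in any case what the second half of your (i) and all of your (ii) actually use. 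With these two repairs the proof is correct.
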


\subsection{Equivariant Fourier--Mukai transforms} \label{subsec:equi FM transform}
Let $X$ and $Y$ be smooth projective varieties
and let $G$ be a finite group which acts on $D^b(X)$.
By Lemma~\ref{lem:FM-action} this action is given by Fourier--Mukai transforms
and hence defines an action by Fourier--Mukai transforms on
$D^b(X \times Y)$, see Section~\ref{subsec:eq-pushforward and pullback}.\footnote{Take $\beta$ to be $Y \to \Spec(\BC)$.}
Since this action is pulled back from $X$, 
we often write $G \times 1$ for the group which acts on $D^b(X \times Y)$.

Consider the projections
$X \xleftarrow{\rho} X \times Y \xrightarrow{\pi} Y$.
The (equivariant) Fourier--Mukai transform 
$\F_{\CE} \colon D^b(Y) \to D^b(X)_{G}$
with kernel $\CE \in  D^b(X \times Y)_{G \times 1}$
is defined by
\[ \F_{\CE} A = \rho_{\ast}(\pi^{\ast}(A) \otimes \CE) \]
where the tensor product takes values in $D^b(X \times Y)_{G \times 1}$
and $\rho_{\ast}$ is the equivariant pushforward.
Similarly, the (reverse) equivariant Fourier--Mukai transform
$\G_{\CE} \colon D^b(X)_G \to D^b(Y)$
is defined by
\[ \G_{\CE}(E,\phi) = \hom_{\pi}\left( \CE, \rho^{\ast}(E,\phi) \right)^G \]
where we used equivariant pullback and the $\pi$-relative Hom of Section~\ref{subsubsection_tensor_product}.

\begin{lemma} \label{lemma_adjoints} For any $\CE \in D^b(X \times Y)_{G \times 1}$ let
\[ 
\CE_L = \CE \otimes \rho^{\ast} \omega_{X}^{\vee}[- \dim X],
\quad \ 
\CE_R = \CE \otimes \pi^{\ast} \omega_{Y}^{\vee}[- \dim Y].
\]
Then $\G_{\CE_L}$ and $\G_{\CE_R}$ is the left and right adjoint of $\F_{\CE}$ respectively.
\end{lemma}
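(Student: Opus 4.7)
The plan is to deduce both adjunctions from the classical (non-equivariant) Fourier--Mukai adjunctions and then pass to $G$-invariants. The composition $p \circ \F_{\CE} \colon D^b(Y) \to D^b(X)$ with the forgetful functor is the ordinary Fourier--Mukai transform $\F^{\mathrm{cl}}_{\CE}(A) = \rho_\ast(\pi^\ast A \otimes \CE)$. Its right and left adjoints are classically computed — using Grothendieck duality along $\rho$ and $\pi$ (with $\rho^! = \rho^\ast(-) \otimes \pi^\ast \omega_Y[\dim Y]$ and $\pi^! = \pi^\ast(-) \otimes \rho^\ast \omega_X[\dim X]$), tensor--hom, and the projection formula — to be
\[
\F^{\mathrm{cl}}_R(E) = \pi_\ast \hom(\CE, \rho^\ast E) \otimes \omega_Y[\dim Y], \qquad \F^{\mathrm{cl}}_L(E) = \pi_\ast \bigl( \hom(\CE, \rho^\ast E) \otimes \rho^\ast \omega_X[\dim X] \bigr),
\]
which match the defining formulas for $\G_{\CE_R}$ and $\G_{\CE_L}$ on underlying objects, before taking $G$-invariants.

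\textbf{Equivariant passage.} For the right adjoint I would compute
\begin{align*}
\Hom_{D^b(X)_G}(\F_{\CE}(A), (E,\phi)) &= \Hom_{D^b(X)}(\F^{\mathrm{cl}}_{\CE}(A), E)^G \\
 &\cong \Hom_{D^b(Y)}(A, \F^{\mathrm{cl}}_R(E))^G,
\end{align*}
where the $G$-action on $\F^{\mathrm{cl}}_R(E)$ is the one induced by the equivariant structure on $\CE$ and the linearization $\phi$. Since $A$ carries the trivial $G$-action and $|G|$ is invertible in $\BC$, one has $\Hom_{D^b(Y)}(A, F)^G = \Hom_{D^b(Y)}(A, F^G)$ for any $F$ with $G$-action, giving the desired identification with $\Hom_{D^b(Y)}(A, \G_{\CE_R}(E,\phi))$. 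The left adjunction is proved symmetrically, starting from $\Hom_{D^b(X)_G}((E,\phi), \F_{\CE}(A))$ and applying the classical left adjunction $\F^{\mathrm{cl}}_L$.

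\textbf{Main obstacle.} The one genuinely non-formal point is to verify that the classical natural isomorphisms underlying the adjunction chain (Grothendieck duality, tensor--hom, projection formula) are $G$-equivariant with respect to the linearizations induced on each object. This is essentially a naturality argument: the equivariant pullback, pushforward, tensor product, and relative $\hom$ constructed in Sections~\ref{subsec:eq-pushforward and pullback} and~\ref{subsubsection_tensor_product} were set up precisely so that the classical adjunction isomorphisms lift verbatim to the equivariant setting. The crucial input is that the $G$-action on $D^b(X \times Y)$ is the pullback of a Fourier--Mukai action from $X$ and is therefore trivial along $Y$, so all the structure maps commute with the action up to the natural transformations already in place. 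Once these compatibilities are unpacked, the equivariant adjunction falls out of taking invariants in the last step.
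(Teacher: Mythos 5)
Your proposal is correct and takes essentially the same route as the paper: the paper likewise reduces to $G$-invariants of Hom-spaces in the non-equivariant category and runs the classical adjunction chain there, merely phrasing the duality step via Serre duality on $X\times Y$ and on $Y$ rather than via $\rho^!$ and $\pi^!$. The compatibility of these natural isomorphisms with the $G$-linearizations, which you flag as the one non-formal point, is exactly what the equivariant lift of the Serre functor in Section~\ref{subsec:calabi-yau} provides, and the paper is no more detailed about it than you are.
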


Here we followed Remark~\ref{rmk_serrefunc} and have written
$\CE \otimes \rho^{\ast} \omega_{X}^{\vee}[-\dim X]$
for the application of the inverse of the $\pi$-relative Serre functor of $D^b(X \times Y)_{G \times 1}$.

\begin{proof}[Proof of Lemma~\ref{lemma_adjoints}]
For any $(A,\phi) \in D^b(X)$ and $B \in D^b(Y)$ we have
\begin{align*}
& \Hom_{D^b(X)_G}((A,\phi), \F_{\CE}B) \\
& \cong \Hom_{D^b(X \times Y)_{G \times 1}}( \rho^{\ast} (A,\phi), \pi^{\ast}(B) \otimes \CE) \\
& \cong \Hom_{D^b(X \times Y)}(\rho^{\ast}A, \pi^{\ast}(B) \otimes \CE)^G \\
& %\overset{(*)}{\cong}
\cong \left( \Hom_{D^b(X \times Y)}(\pi^{\ast}(B) \otimes \CE, \rho^{\ast}(A) \otimes \omega_{X \times Y}[\dim X + \dim Y])^{\vee} \right)^{G} \\
%& = \Hom_{D(X \times Y)}( \pi^{\ast}(B) , \hom( \CE, \rho^{\ast}(A) \otimes \omega_{X \times Y}[d_X + d_Y]))^G \\
& \cong \left( \Hom_{D^b(Y)}( B , \hom_{\pi}( \CE, \rho^{\ast}(A) \otimes \omega_{X \times Y}[\dim X + \dim Y]))^{\vee}\right)^{G} \\
%& = \Hom_{D(Y)}( B , \hom_{\pi}( \CE, \rho^{\ast}(A) \otimes \omega_{X \times Y}[d_X + d_Y])^G) \\
& \cong \Hom_{D^b(Y)}( \hom_{\pi}( \CE, \rho^{\ast}(A) \otimes \rho^\ast\omega_{X}[\dim X]), B)^G \\
& \cong \Hom_{D^b(Y)}( \G_{\CE \otimes \rho^\ast \omega_{X}^{\vee}[-\dim X]}(A), B).
\end{align*}
%where in (*) we have used the linearization on $\rho^{\ast}(A) \otimes \omega_{X \times Y}$
%defined by $\tilde{S}_{\pi} \otimes \omega_Y$, see Section~\ref{Subsubsection_Serre_functor}.
The other case is similar.
\begin{comment}
Similarly,
\begin{align*}
\Hom _{D(X)_G}(F_{\CE}B, (A,\phi))
& = \Hom_{D(X)}( \rho_{\ast}( \pi^{\ast}(B) \otimes \CE), A )^G \\
& = \Hom_{D(X)}( A , \rho_{\ast}( \pi^{\ast}(B) \otimes \CE) \otimes \omega_{X}[d_X] )^G \\
& = \Hom_{D(X \times Y)}( \rho^{\ast} A , \pi^{\ast}(B) \otimes \CE \otimes \rho^{\ast} \omega_{X}[d_X] )^G \\
& = \Hom_{D(X \times Y)}( \pi^{\ast}(B) \otimes \CE, \rho^{\ast} A \otimes \pi^{\ast} \omega_Y[d_Y] )^G \\
& = \Hom_{D(Y)}( B, \hom_{\pi}(\CE, \rho^{\ast} A \otimes \pi^{\ast} \omega_Y[d_Y])^G ) \\
& = \Hom_{D(Y)}( B, G_{\CE \otimes \pi^{\ast} \omega_{Y}^{\vee}[-d_Y]}(A, \phi) ).
\end{align*}
\end{comment}
\end{proof}

\begin{comment}
\begin{example}
The linearization functor $q \colon D^b(X) \to D^b(X)_G$ defined in \eqref{linearlization_functor} is equal
the Fourier--Mukai transform $\F_{\CE_q}$ with kernel
\[ \CE_q = \left( \bigoplus_{g \in G} \CE_g, \phi \right) \]
with the $1 \times G$-linearization $\phi$ defined for all $h \in G$ by
\[ (1 \times h)( \oplus_g \CE_g ) = \oplus_g \CE_h \circ \CE_g \xrightarrow{\theta_{g,h}} \oplus_{g} \CE_{hg}. \]
\end{example}
\end{comment}

We have the following criterion when a Fourier--Mukai transform $\F_{\CE} \colon D^b(Y) \to D^b(X)_G$ is an equivalence.

\begin{prop} \label{prop_equivalence_criterion}
Let $\CE \in D^b(X \times Y)_{G \times 1}$. Assume that
\begin{enumerate}
\item[(i)] $\Hom_{D^b(X)_G}( \CE_x, \CE_y[i] ) = \Hom_{D^b(Y)}( \BC_x, \BC_y[i] )$ for all $x,y \in Y$.
\item[(ii)] $D^b(X)_G$ is indecomposable.
\item[(iii)] The functor $\F_{\CE}$ commutes on objects with Serre functors, i.e.\ $\tilde{S} \F_{\CE}(A) \cong F_{\CE} S(A)$ for all $A\in D^b(Y)$.
\end{enumerate}
Then $\F_{\CE}$ is an equivalence.
\end{prop}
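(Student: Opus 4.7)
The plan is to reduce to the standard Bondal--Orlov and Bridgeland criteria for triangulated categories, suitably applied in the equivariant setting. By Lemma~\ref{lemma_adjoints} the functor $\F_{\CE}$ admits both a left and right adjoint, and by the discussion in Section~\ref{subsec:calabi-yau} the equivariant category $D^b(X)_G$ carries a Serre functor $\tilde{S}$ lifting the Serre functor on $D^b(X)$. With these prerequisites, the usual toolkit applies once a few equivariant checks are made.

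First, I would show full faithfulness via a spanning class argument. The skyscraper sheaves $\{\BC_y\}_{y \in Y}$ form a spanning class of $D^b(Y)$, and by base change we have $\F_{\CE}(\BC_y) \cong \CE_y$ together with its induced $G$-linearization. Hypothesis (i) is then precisely the assertion that the natural map
\[
\Hom_{D^b(Y)}(\BC_x, \BC_y[i]) \ra \Hom_{D^b(X)_G}(\F_{\CE}\BC_x, \F_{\CE}\BC_y[i])
\]
is an isomorphism for all $x,y \in Y$ and all $i \in \BZ$. Bridgeland's spanning-class criterion then yields that $\F_{\CE}$ is fully faithful.

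Second, given full faithfulness, I would invoke the standard criterion that a fully faithful exact functor with both adjoints between triangulated categories with Serre functors is an equivalence provided (a) the target is indecomposable, (b) the source is nonzero, and (c) the functor commutes on objects with Serre functors. Conditions (a), (b), (c) are exactly (ii), the fact that $Y \neq \varnothing$, and (iii) respectively. The argument is then classical: commutation with Serre functors forces the essential image of $\F_{\CE}$ to be preserved by $\tilde{S}$, which in turn makes this image both left- and right-admissible in $D^b(X)_G$, hence a direct summand. Indecomposability of $D^b(X)_G$ then forces the image to be all of $D^b(X)_G$.

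The main obstacle is verifying that the standard triangulated-category criteria apply verbatim in the equivariant setting. The three technical ingredients needed are a dg-enhancement on $D^b(X)_G$ (available via Proposition~\ref{prop_dgenhancement} when an appropriate heart is $G$-preserved), finite-dimensional $\Hom$ spaces together with the Serre functor $\tilde{S}$ on $D^b(X)_G$ (Section~\ref{subsec:calabi-yau}), and the adjoints of $\F_{\CE}$ (Lemma~\ref{lemma_adjoints}). Once these are in place, no genuinely new arguments beyond the non-equivariant case are required.
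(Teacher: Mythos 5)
Your proposal is correct and takes essentially the same route as the paper: the paper's proof consists of establishing the two adjoints via Lemma~\ref{lemma_adjoints} and then citing \cite[Thm.\ 2.3]{BKR}, and your argument simply unwinds that cited criterion into its two standard steps (full faithfulness on the spanning class of skyscrapers, then essential surjectivity from indecomposability plus commutation with Serre functors). The only caveat is your reading of hypothesis (i) as saying that the map \emph{induced by} $\F_{\CE}$ is an isomorphism: as stated (and as verified in the proof of Theorem~\ref{mainthm1}) it is only an abstract equality of $\Hom$-spaces, and deducing from these dimension constraints that the natural maps are isomorphisms is exactly the extra content of the Bridgeland--King--Reid form of the criterion.
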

\begin{proof}
By Lemma~\ref{lemma_adjoints} the functor $\F_{\CE} \colon D^b(Y) \to D^b(X)_G$ has both right and left adjoints. The assertion then follows from \cite[Thm.\ 2.3]{BKR}.
\end{proof}

%\begin{question}
%Is every equivalence $D^b(Y) \xrightarrow{\cong} D^b(X)_{G}$ induced by a Fourier--Mukai transform $\F_{\CE}$ for some kernel $\CE \in D^b(X \times Y)_{G \times 1}$?
%\end{question}

\section{Proof of results} \label{sec:symplectic surfaces}
Let $S$ be a symplectic surface with a $G$-action on $D^b(S)$ satisfying conditions (i)-(iii) of Section~\ref{subsec:main results}
%Throughout this section we let $S$ be a symplectic surface and $G$ be a finite group acting on $D^b(S)$ satisfying conditions (i)-(iii) or Section~\ref{subsec:main results}.
and let $\sigma \in \Stab^{\dagger}(S)$ be a $G$-fixed stability condition.
%be a stability condition in the distinguished component.
%Throughout this section we assume that this triple satisfies the conditions (i), (ii) and (iii) of Section~\ref{subsec:main results}.

\subsection{Preliminaries}
We have the following structure result.

\begin{prop} \label{prop_indecomposable}
The equivariant category $D^b(S)_G$ is triangulated, indecomposable and Calabi--Yau of dimension $2$.
\end{prop}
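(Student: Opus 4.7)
The plan is to verify the three conclusions---triangulated, indecomposable, and Calabi--Yau of dimension $2$---in turn, with the triangulated structure providing the foundation for the Calabi--Yau statement via Proposition~\ref{prop:calabi-yau}.

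First I would establish the triangulated structure by applying Proposition~\ref{prop_dgenhancement}. This requires producing a full abelian subcategory $\CA \subset D^b(S)$ that is preserved by the $G$-action and satisfies $D^b(\CA) \cong D^b(S)$. The natural candidate is (an appropriate shift of) the heart of the $G$-fixed stability condition $\sigma \in \Stab^{\dagger}(S)$: since $\sigma$ is $G$-invariant, its heart is $G$-invariant. The fact that, after a suitable shift, this heart generates $D^b(S)$ as a triangulated category is supplied by Appendix~\ref{sec:app_hearts} in the K3 case; for an abelian surface the same conclusion holds (the standard heart works, or one argues analogously). Proposition~\ref{prop_dgenhancement} then simultaneously produces a $G$-equivariant dg-enhancement of $D^b(S)$ lifting the action and shows that $D^b(S)_G$ is triangulated.

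Second, indecomposability is essentially built into hypothesis (iii). By the result cited from \cite{Notes} in the introduction, a $G$-action on $D^b(S)$ satisfying $\rho_g \not\cong \id$ for every non-trivial $g$ yields an indecomposable equivariant category; otherwise $D^b(S)_G$ would split as an orthogonal sum of summands corresponding to faithful quotients. I would simply invoke this, since the hypothesis $\rho_g \not\cong \id$ for $g \neq 1$ is precisely (iii).

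Third, the Calabi--Yau statement follows directly from Proposition~\ref{prop:calabi-yau}(i) applied to $X = S$. Its hypotheses are all in place: $\omega_S \cong \CO_S$ because $S$ is a K3 or abelian surface; the $G$-action lifts to the dg-enhancement by the first step; and the induced action on $H^{\ast}(S,\BC)$ preserves the class of the holomorphic symplectic form by hypothesis (i), which is exactly the requirement that each $\rho_g$ be symplectic. The proposition then gives that $D^b(S)_G$ is Calabi--Yau of dimension $2$.

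The only genuinely non-routine ingredient is the first step: ensuring that the heart of a distinguished stability condition, possibly after a homological shift, generates $D^b(S)$. This is precisely the purpose of Appendix~\ref{sec:app_hearts}; without it one would have to argue for the dg-lift of the action by other means (for instance, invoking uniqueness of dg-enhancements \cite{LO} together with a separate verification that the $G$-action respects the chosen enhancement). The indecomposability and Calabi--Yau parts are then essentially formal consequences of the hypotheses.
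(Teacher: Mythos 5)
Your proposal is correct and follows essentially the same route as the paper: use Proposition~\ref{prop:DA=DS} (Appendix~\ref{sec:app_hearts}) to replace $\sigma$ by a $\widetilde{\mathrm{GL}^+}(2,\BR)$-translate whose heart is $G$-invariant and generates $D^b(S)$, feed this into Proposition~\ref{prop_dgenhancement} for the triangulated structure and the dg-lift, apply Proposition~\ref{prop:calabi-yau}(i) with hypothesis (i) for the Calabi--Yau property, and read off indecomposability from the faithfulness assumption (iii). The only cosmetic difference is that the paper phrases the $G$-invariance of the new heart via the commutativity of the $\widetilde{\mathrm{GL}^+}(2,\BR)$- and $G$-actions, which is equivalent to your observation that $G$ preserves the slicing of $\sigma$.
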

\begin{proof}
Write $\sigma = (\CA,Z)$. % be a $G$-fixed stability condition in $\Stab^{\dagger}(S)$.
Since the actions of $\widetilde{\textup{GL}^+}(2,\BR)$ and $G$ on the stability manifold commute,
by Proposition~\ref{prop:DA=DS} we may assume that 
$D^b(\CA) \cong D^b(S)$.
Applying Proposition~\ref{prop_dgenhancement} we see that $D^b(S)_G$ is triangulated and that the $G$-action on $D^b(S)$ lifts to an action on the dg-enhancement.
Hence by Proposition~\ref{prop:calabi-yau} and assumption (i) the category $D^b(S)_G$ is Calabi--Yau.
Since $G$ acts faithfully, the indecomposability of $D^b(S)_G$ holds by definition.
\end{proof} 

\subsection{Moduli spaces}
%We consider moduli spaces of objects in $D^b(S)_G$.
By work of Toda~\cite{Toda1} the distinguished component $\Stab^{\dagger}(S)$
satisfies condition $(\dagger)$ of Section~\ref{sec:summary}.
Hence by Theorem~\ref{thm:existence gms} we have the following.

\begin{prop} \label{prop_gms} 
Let $v' \in K(D^b(S)_G)$.
Then $\CM_{\sigma_G}(v')$ is a universally closed Artin stack of finite type over $\BC$ which
admits a proper good moduli space. 
%The inclusion $\CM_{\sigma_G}(v') \to \mathfrak{M}^G$ is an open embedding.
\end{prop}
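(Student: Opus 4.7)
The plan is to view this proposition as essentially an application of Theorem~\ref{thm:existence gms}, which reduces the problem to verifying condition $(\dagger)$ of Section~\ref{sec:summary} for the distinguished component $\Stab^{\dagger}(S)$. The rest is just invoking the general moduli-theoretic machinery built up in Part~1.

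More precisely, I would first recall that by Toda's construction of moduli spaces of Bridgeland semistable objects on K3 and abelian surfaces (\cite{Toda1}), one can exhibit an algebraic stability condition $\sigma_0 = (\CA_0,Z_0) \in \Stab^{\dagger}(S)$ whose heart $\CA_0$ is Noetherian, satisfies the generic flatness property of \cite{AP}, and such that $\CM_{\sigma_0}(v)$ is bounded for every $v \in K(\CA_0)$. This is precisely condition $(\dagger)$. As noted in Section~\ref{sec:summary}, once $(\dagger)$ is verified for a single algebraic stability condition in the component, \cite[Prop.\ 4.12]{PT} propagates it to every algebraic stability condition in $\Stab^{\dagger}(S)$.

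Next, since the given $\sigma \in \Stab^{\dagger}(S)$ is $G$-fixed, Lemma~\ref{lem:StabCondtoAlgebraicinGinvariant} (together with its proof via the $G$-invariant refinement of the wall-and-chamber arguments from \cite{AHLH,MMS}) produces a $G$-invariant algebraic stability condition $\sigma' \in \Stab^{\dagger}(S)$ with $\CM_{\sigma}(p_{\ast}v') = \CM_{\sigma'}(p_{\ast}v')$ as moduli functors, hence also $\CM_{\sigma_G}(v') = \CM_{\sigma'_G}(v')$ by Proposition~\ref{prop_fixed_stack_Semistable}. We are thus in the situation where Theorem~\ref{thm:existence_gms} applies directly to $\sigma'$ and $v'$.

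Applying Theorem~\ref{thm:existence_gms} then immediately yields that $\CM_{\sigma_G}(v')$ is a universally closed Artin stack of finite type over $\BC$ admitting a proper good moduli space. I do not expect any genuine obstacle here: the real work has already been done in proving Theorem~\ref{thm:existence gms} (in particular, in the use of \cite[Thm.\ A--C]{AHLH} combined with the affineness of $\epsilon \colon \mathfrak{M}^G \to \mathfrak{M}$ from Proposition~\ref{algebraicity_of_fixed_stack}) and in Toda's verification of $(\dagger)$ for $\Stab^{\dagger}(S)$. The mildly subtle point — which is the one place where a small check is needed rather than a pure citation — is the $G$-equivariant upgrade of the algebraicity reduction, namely Lemma~\ref{lem:StabCondtoAlgebraicinGinvariant}, to ensure that the auxiliary algebraic approximation of $\sigma$ can be chosen inside the $G$-fixed locus of $\Stab^{\dagger}(S)$.
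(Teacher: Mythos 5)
Your proposal is correct and follows exactly the paper's route: the proof of Proposition~\ref{prop_gms} consists of citing Toda's work \cite{Toda1} to verify condition $(\dagger)$ for $\Stab^{\dagger}(S)$ and then invoking Theorem~\ref{thm:existence gms}, whose own proof is precisely the reduction to an algebraic $G$-invariant stability condition via Lemma~\ref{lem:StabCondtoAlgebraicinGinvariant} followed by Theorem~\ref{thm:existence_gms}. The extra details you supply (propagation of $(\dagger)$ via \cite[Prop.\ 4.12]{PT}, identification of the moduli functors) are just an unpacking of the same argument.
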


Recall the notion of a $(G,\sigma)$-generic class from Definition~\ref{defn_G_sigma_generic}.
%We have the following for $(G,\sigma)$-generic Mukai vectors:

\begin{prop} \label{prop_sigmaG} 
If $v \in \Lambda^G$ is $(G,\sigma)$-generic, then $\CM_{\sigma}(v)^G$
has a good moduli space $N$ which is smooth, symplectic and proper.
The map $\pi \colon \CM_{\sigma}(v)^G \to N$ is a $\BG_m$-gerbe.
\end{prop}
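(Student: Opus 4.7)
The plan is to apply the standard machinery for moduli of stable objects in $2$-Calabi--Yau categories (Mukai, Inaba, Kuznetsov--Markushevich, Bayer--Macrì) directly to the equivariant category $D^b(S)_G$. By Proposition~\ref{prop_fixed_stack_Semistable},
\[
\CM_{\sigma}(v)^G = \bigsqcup_{p_{\ast} v' = v} \CM_{\sigma_G}(v'),
\]
and by Proposition~\ref{prop_gms} each summand admits a proper good moduli space $N_{v'}$; set $N = \bigsqcup N_{v'}$. Since $v$ is $(G,\sigma)$-generic, Lemma~\ref{lemma_key} shows that every geometric point $(E,\phi)$ of $\CM_{\sigma_G}(v')$ is $\sigma_G$-stable and satisfies $\Hom_{\CA_G}((E,\phi),(E,\phi)) = \BC\cdot\id$. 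Hence every stabilizer group is $\BG_m$ and the good moduli space morphism $\pi \colon \CM_{\sigma_G}(v') \to N_{v'}$ is a $\BG_m$-gerbe.

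For smoothness I would use Proposition~\ref{def_obst_thy}: obstructions to deforming $(E,\phi)$ lie in $\Ext^2_{D^b(S)}(E,E)^G_0$. By Proposition~\ref{prop_indecomposable} the equivariant category is $2$-Calabi--Yau, so Serre duality yields $\Ext^2_{D^b(S)_G}((E,\phi),(E,\phi)) \cong \BC$, i.e.\ $\Ext^2(E,E)^G = \BC$. Because $G$ acts symplectically, also $H^2(S,\CO_S)^G = \BC$. Under Serre duality on $D^b(S)$ the trace map $\Ext^2(E,E) \to H^2(S,\CO_S)$ is dual to the scalar map $H^0(S,\CO_S) \to \Hom(E,E)$, $1 \mapsto \id$; taking $G$-invariants this remains an isomorphism between one-dimensional spaces, so its kernel $\Ext^2(E,E)^G_0$ vanishes. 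All obstructions vanish, $\CM_{\sigma_G}(v')$ is smooth, and therefore so is the gerbe base $N_{v'}$.

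For the symplectic structure, the $2$-Calabi--Yau pairing on $D^b(S)_G$ supplies at every point of $N$ a nondegenerate skew-symmetric pairing
\[
\Ext^1_{D^b(S)_G}((E,\phi),(E,\phi)) \otimes \Ext^1_{D^b(S)_G}((E,\phi),(E,\phi)) \to \Ext^2_{D^b(S)_G}((E,\phi),(E,\phi)) \cong \BC
\]
on the tangent space. The main obstacle will be to promote this fiberwise form to a closed algebraic $2$-form on $N$. I would carry this out by mimicking Mukai's classical construction in the equivariant setting: use a (possibly twisted, as in Remark~\ref{rmk:coarse moduli space}) equivariant universal complex on $\CM_{\sigma_G}(v') \times S$, form its Atiyah class squared in the equivariant derived category, and apply the $G$-invariant trace landing in $H^2(S,\CO_S)^G = \BC$; this yields a global $2$-form whose fiberwise value is the pairing above, and closedness can be verified by a Kuznetsov--Markushevich-type argument adapted to the $2$-Calabi--Yau category $D^b(S)_G$. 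A cleaner alternative is to invoke the shifted symplectic structure on the derived moduli stack of a $2$-Calabi--Yau dg-category (Brav--Dyckerhoff, Pantev--Toën--Vaquié--Vezzosi), applied to the dg-enhancement of $D^b(S)_G$ furnished by Proposition~\ref{prop_dgenhancement}, and take the underlying classical $0$-shifted symplectic form on $N$. Properness of $N$ is already supplied by Proposition~\ref{prop_gms}.
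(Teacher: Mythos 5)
Your proposal is correct in outline and, for the gerbe and smoothness parts, follows essentially the same route as the paper: decomposition via Proposition~\ref{prop_fixed_stack_Semistable}, properness from Proposition~\ref{prop_gms}, stabilizers $\BC^{\ast}\cdot\id$ from Lemma~\ref{lemma_key}, and vanishing of $\Ext^2(E,E)^G_0$ from the $2$-Calabi--Yau property combined with Serre duality applied to $\BC\cdot\id\subset\Hom(E,E)$ --- your "trace is dual to the scalar map" argument is exactly the paper's. Two remarks. First, a small omitted preprocessing step: Proposition~\ref{def_obst_thy} requires $D^b(\CA)\cong D^b(S)$ (and $\sigma$ algebraic for the moduli-functor comparison), so before invoking it you should deform $\sigma$ inside $\Stab^{\dagger}(S)^G$ as in Lemma~\ref{lem:StabCondtoAlgebraicinGinvariant} and apply Proposition~\ref{prop:DA=DS}/Remark~\ref{rmk:symp_heart_algebraic}; the paper does this at the outset. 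Second, for the symplectic form the paper takes a lighter path than either of your suggestions: rather than rebuilding Mukai's Atiyah-class construction inside $D^b(S)_G$ or invoking shifted symplectic structures, it pulls back the already-known antisymmetric Yoneda pairing on $\ext^1_{\rho}(\CE,\CE)$ over $\CM_{\sigma}(v)$ (Huybrechts--Lehn), restricts to the $G$-invariant part along $\epsilon\colon\CM_{\sigma_G}(v')\to\CM_{\sigma}(v)$, identifies $\epsilon^{\ast}\ext^1_{\rho}(\CE,\CE)^G$ with $T_N$ via Proposition~\ref{def_obst_thy}, and gets non-degeneracy from equivariant Serre duality; closedness is then inherited from the ambient form rather than reproved. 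Your alternatives would also work, but they import machinery the paper deliberately avoids; the pullback argument is the more economical choice and also handles the twisted case uniformly.
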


\begin{proof}
By arguing as in the proof of Lemma~\ref{lem:StabCondtoAlgebraicinGinvariant} we can deform $\sigma$ inside $\Stab^{\dagger}(S)^G$ 
to an algebraic stability condition, without modifying the moduli functor $\CM_{\sigma}(v)$.
Together with Remark~\ref{rmk:symp_heart_algebraic} we hence can assume that 
$\sigma$ is algebraic and that $D^b(\CA) \cong D^b(S)$.

Let $\pi \colon \CM_{\sigma}(v)^G \to N$ be the good moduli space of $\CM_{\sigma}(v)^G$.
For every $x \in \CM_{\sigma}(v)^G(T)$ over a scheme $T$ corresponding to an equivariant object $(E,\phi)$ we have an inclusion $\BG_m(T) \hookrightarrow \Aut(x)$ by sending $f \in \BG_m(T)$ to $f \cdot \id_E$.
Moreover, for every $\BC$-point $p \in \CM_{\sigma}(v)^G$ by
Lemma~\ref{lemma_key} we have
\begin{equation*} \label{automorphisms}
\Aut_{\CM_\sigma(v)^G}(p) = 
\Aut_{\CM_{\sigma_G}(v')}(p)
= \Aut_{\CA_G}(E, \phi) = \BC^{\ast} \cdot \id.
\end{equation*}
This shows that $\pi$ is a $\BG_m$-gerbe.

Let $p \in \CM_{\sigma}(v)^G$ be a $\BC$-valued point corresponding to some object $(E,\phi) \in \CA_G$. Let $v' \in K(\CA_G)$ be the class of $(E,\phi)$.
Applying Lemma~\ref{lemma_key} again we have
\[ \Hom_{\CA_G}( (E, \phi), (E, \phi) ) = \BC. \]
Since  $D^b(S)_G$ is Calabi--Yau of dimension $2$, we find that
\[ \Ext_{\CA_G}^2( (E,\phi), (E,\phi) ) = \Hom_{\CA_G}( (E,\phi), (E,\phi))^{\vee} \cong \BC. \]
By Lemma~\ref{Lemma_locally_constant_euler_char}
the Euler characteristic $\chi((E,\phi), (E,\phi))$ is locally constant and hence depends only on $v'$. We write $\chi(v',v')$ for its value.
By Proposition~\ref{def_obst_thy} we conclude that the dimension of the tangent space of $N$ at $p$ is
\[ \dim T_{N,p} = \dim \Ext_{\CA_G}^1( (E,\phi), (E,\phi) ) = - \chi(v',v') + 2. \]
In particular, the dimension is locally constant in $p$.
Moreover, from the $G$-invariant inclusion
$\BC \cdot \id \subset \Hom(E,E)$
we obtain via Serre duality a $G$-invariant surjection $\Ext^2(E,E) \to \BC$
which is precisely the trace map.
This shows that the trace map is surjective on the $G$-invariant part
and thus that the trace-free part vanishes: 
\[ \Ext^2(E,E)^G_0 = 0. \]
Using Proposition~\ref{def_obst_thy} again we find that all obstructions vanish and $N$ is smooth.

The symplectic form on $N$ can be constructed from the fact that it is a moduli space of stable objects in a $2$-CY category.
It can be seen also directly:

Recall from \cite[Sec.\ 10]{HuyLehn} the anti-symmetric Yoneda pairing on $\CM_\sigma(v)$,
\begin{equation} \ext^1_{\rho}(\CE, \CE) \times \ext^1_{\rho}(\CE,\CE) \to \ext^2_{\rho}(\CE, \CE), \label{yoneda} \end{equation}
where $\CE$ is the universal family on $S \times \CM_\sigma(v)$ and $\rho \colon S \times \CM_{\sigma}(v) \to \CM_{\sigma}(v)$ is the projection to the second factor.
Restricting to the $G$-invariant part and pulling back \eqref{yoneda} via $\epsilon \colon \CM_{\sigma_G}(v') \to \CM_{\sigma}(v)$ yields a pairing
\begin{equation} \epsilon^{\ast} \ext_{\rho}^1(\CE, \CE)^G \times \epsilon^{\ast} \ext_{\rho}^1(\CE,\CE)^G \to \epsilon^{\ast} \ext^2_{\rho}(\CE, \CE). \label{aaaghbfdb} \end{equation}
By Proposition~\ref{def_obst_thy} the sheaf $\epsilon^{\ast} \ext^1_{\rho}(\CE, \CE)^G$ is the tangent bundle of $N$.
Since the symplectic form is $G$-invariant, the image of \eqref{aaaghbfdb} is the $G$-invariant part
$\epsilon^{\ast}_{\rho} \ext^2(\CE, \CE)^G = \CO_N$.
Equivariant Serre duality implies that the pairing \eqref{aaaghbfdb} is non-degenerate and hence a symplectic form.
\end{proof}

\subsection{Proof of Theorem~\ref{mainthm1}}
Consider the $G^{\vee}$-torsor given in \eqref{thm:summary:map},
\begin{equation} \label{equation_proof_thm1} 
%\bigsqcup_{v' \in R_v} 
\bigsqcup_{p_{\ast} v' = v}
M_{\sigma_G}(v') \to M^G.
\end{equation}
Let $F \subset M^G$ be a $G$-linearizable $2$-dimensional component  and let
\[ S' \subset M_{\sigma_G}(v') \]
be a connected component
%be any connected component 
which maps to $F$. %the component $F \subset M^G$.
The map $S' \to F$ is a torsor for the subgroup of $G^{\vee}$ that preserves this component.
%By Remark~\ref{remark_Gdual_action} the degree of the projection $S' \to F$ divides
%the order of $G^{\vee}$.

By the second part of Proposition~\ref{prop_fixed_stack_for_fine_moduli_space}
the moduli space $M_{\sigma_G}(v')$ is fine,
i.e.\ there is a universal equivariant object
on $M_{\sigma_G}(v') \times S$.
Let 
\[ \CE = (E,\phi) \in D^b( S' \times S)_{1 \times G}. \]
be its restriction to $S' \times S$.
We will check that the induced Fourier--Mukai transform
\[ \F_{\CE} \colon D^b( S' ) \to D^b(S)_{G} \]
is an equivalence.

For any $x \in S'$ we have
\begin{align*}
\Hom_{D^b(S)_G}( \CE_x, \CE_x ) & = \Hom_{D^b(S)}( \CE_x, \CE_x )^G = \BC \\
\Ext^1_{D^b(S)_G}( \CE_x, \CE_x ) & = \Ext_{D^b(S)}^1(\CE_x, \CE_x)^G = T_{S',x} \cong \BC^2 \\
\Ext^2_{D^b(S)_G}( \CE_x, \CE_x ) & = \Hom_{D^b(S)_G}( \CE_x, \CE_x )^{\vee} \cong \BC.
\end{align*}
The first line follows from the stability of $\CE_x$.
The second line follows from Proposition~\ref{def_obst_thy},
the smoothness of $S'$, and since $F$ and hence $S'$ are $2$-dimensional.
The third line follows since the equivariant category is Calabi--Yau.
In particular, we have 
$\chi(\CE_x, \CE_x) = 0$,
and using Lemma~\ref{Lemma_locally_constant_euler_char} this yields
\[ \chi(\CE_x, \CE_y) = 0 \quad \text{ for all } x,y \in S'. \]
Further for all distinct $x,y \in S'$ by the stability of $\CE_x$ and $\CE_y$ we have
\begin{gather*}
     \Hom_{D^b(S)_G}(\CE_x, \CE_y) = 0 \\
     \Ext_{D^b(S)_G}^2(\CE_x, \CE_y) = \Hom_{D^b(S)_G}(\CE_y, \CE_x)^{\vee} = 0.
\end{gather*}
Hence from the Euler characteristic calculation we also get $\Ext^1(\CE_x, \CE_y) = 0$.
We have therefore proven that for all $x,y \in S'$ we have
\[ \Hom_{D^b(S')}(\BC_x, \BC_y[i] ) = \Hom_{D^b(S)_G}( \CE_x, \CE_y[i] ). \]

By Proposition~\ref{prop_indecomposable} the category $D^b(S)_G$ is indecomposable and Calabi--Yau of dimension~$2$.
Applying Proposition~\ref{prop_equivalence_criterion} we conclude that $\F_{\CE}$ is an equivalence.
\qed

\subsection{A stronger version of Theorem~\ref{mainthm1}} \label{subsec:stronger version of thm1}
We state a version of Theorem~\ref{mainthm1} where we drop the condition on the moduli space to parametrize only stable objects.
This is useful since not every group action on $D^b(S)$ induces an action on such a moduli space.
%Let $D^b(S',\alpha)$ denote the derived category of $\alpha$-twisted coherent sheaves on $S'$.
%The notion of a good moduli space was introduced in \cite{Alper}.
%The fixed stack is taken in the categorical sense of Romagny \cite{R}, see Section~\ref{Subsection_group_action_on_Stacks}.

%\begin{defn}  \label{defn_G_sigma_generic}
%Let $Z$ denote the central charge of $\sigma$.
%A vector $v \in \Lambda^G_{\textup{alg}}$ is \emph{$(G,\sigma)$-generic} if it is primitive and for every splitting $v = v_0 + v_1$ with $v_0, v_1 \in \LGalg\setminus \BZ v$
%the values $Z(v_0)$ and $Z(v_1)$ have different slopes.
%\end{defn}
%Given any primitive vector $v \in \Lambda^G_{\textup{alg}}$, one can show that after a small deformation of $\sigma$ along $G$-fixed stability conditions
%the class $v$ becomes $(G,\sigma)$-generic. %, see Section~\ref{sec:summary}.

%Let also $\CM_{\sigma}(v)$ denote the moduli stack of $\sigma$-semistable objects in class $v$.

\begin{thm} \label{mainthm2}
Let $v \in \Lambda^G_{\textup{alg}}$ be $(G,\sigma)$-generic and let $N$ be the good moduli space of $\CM_{\sigma}(v)^G$.
If $N$ has a $2$-dimensional connected component $S'$, then we have an equivalence
\[ D^b(S',\alpha) \xrightarrow{\cong} D^b(S)_G \]
where $\alpha \in \mathrm{Br}(S')$ is the Brauer class of the gerbe $\pi \colon \CM_{\sigma}(v)^G \to N$ restricted to $S'$.
%\begin{enumerate}
%\item[(a)] The fixed stack $\CM_{\sigma}(v)^G$ has a good moduli space $\pi \colon \CM_{\sigma}(v)^G \to N$
%which is smooth, symplectic and proper. The map $\pi$ is a $\BG_m$-gerbe.
%\item[(b)] If $N$ has a $2$-dimensional connected component $S'$,
%then the restriction of the universal family induces an equivalence
%\[ D^b(S',\alpha) \xrightarrow{\cong} D^b(S)_G \]
%where $\alpha \in \mathrm{Br}(S')$ is the Brauer class of the gerbe.
%\end{enumerate}
\end{thm}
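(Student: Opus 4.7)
The proof proceeds along exactly the same lines as that of Theorem~\ref{mainthm1}, with the essential modification being the use of twisted sheaves to account for the possible non-triviality of the gerbe $\pi$. The plan is therefore to construct a twisted equivariant universal family, define a twisted equivariant Fourier--Mukai transform, and apply (the twisted version of) the equivalence criterion from Proposition~\ref{prop_equivalence_criterion}.

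First I would set up the geometry: by Proposition~\ref{prop_sigmaG}, the good moduli space $N$ is smooth, symplectic and proper over $\BC$, and $\pi\colon \CM_{\sigma}(v)^G \to N$ is a $\BG_m$-gerbe. Using Proposition~\ref{prop_fixed_stack_Semistable}, the component $S' \subset N$ arises as the good moduli space of a connected component of $\CM_{\sigma_G}(v')$ for some class $v'$ with $p_{\ast}v' = v$. Let $\alpha \in \mathrm{Br}(S')$ denote the Brauer class of $\pi|_{S'}$. As in Remark~\ref{rmk_non-trivial-gm-gerbe} and Remark~\ref{rmk:coarse moduli space}, the universal equivariant family on the gerbe descends to a twisted equivariant object
\[
\CE \in D^b(S' \times S,\, \alpha^{-1} \boxtimes 1)_{1 \times G},
\]
which yields a twisted equivariant Fourier--Mukai transform
\[
\F_{\CE} \colon D^b(S', \alpha) \longrightarrow D^b(S)_G.
\]

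To show that $\F_{\CE}$ is an equivalence, I would apply the twisted analogue of Proposition~\ref{prop_equivalence_criterion}, which remains valid since the underlying argument from \cite[Thm.\ 2.3]{BKR} is local in nature and is compatible with twists. The three hypotheses are then verified exactly as in the proof of Theorem~\ref{mainthm1}. Since $v$ is $(G,\sigma)$-generic, Lemma~\ref{lemma_key} implies that each fiber $\CE_x$ is $\sigma_G$-stable, and the Ext computations using Proposition~\ref{prop_indecomposable} (2-Calabi--Yau property) and Proposition~\ref{def_obst_thy}, combined with $S'$ being smooth of dimension two, give $\chi(\CE_x,\CE_x)=0$ and the point-like identifications
\[
\Hom_{D^b(S)_G}(\CE_x, \CE_y[i]) = \Hom_{D^b(S')}(\BC_x, \BC_y[i])
\qquad \text{for all } x,y \in S'.
\]
The category $D^b(S)_G$ is indecomposable by Proposition~\ref{prop_indecomposable}, and because both $D^b(S',\alpha)$ and $D^b(S)_G$ are $2$-Calabi--Yau (noting that the component $S'$ of the symplectic variety $N$ is itself symplectic), the Serre functor is the shift $[2]$ on each side, so the compatibility condition~(iii) is automatic.

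The main technical obstacle is therefore purely foundational: setting up the twisted equivariant Fourier--Mukai formalism rigorously, that is, constructing the twisted universal family from the gerbe, extending the tensor and internal Hom compatibilities of Section~\ref{subsubsection_tensor_product} to the $\alpha$-twisted setting, and verifying that Proposition~\ref{prop_equivalence_criterion} applies verbatim when the source is $D^b(S',\alpha)$. Once these standard generalizations are in place, the remainder of the argument mirrors Theorem~\ref{mainthm1} with no essential change.
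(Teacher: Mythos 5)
Your proposal matches the paper's proof of Theorem~\ref{mainthm2}: descend the universal equivariant object along the $\BG_m$-gerbe $\pi^{-1}(S') \to S'$ to an $\alpha$-twisted $1\times G$-equivariant family $\CE$ on $S'\times S$, and then run the argument of Theorem~\ref{mainthm1} (stability of the fibers $\CE_x$ via $(G,\sigma)$-genericity and Lemma~\ref{lemma_key}, the Ext computations, indecomposability, and Proposition~\ref{prop_equivalence_criterion}) verbatim in the twisted setting. The only cosmetic difference is your sign convention $\alpha^{-1}$ on the kernel, which is the same convention the paper uses in Remark~\ref{rmk:coarse moduli space}.
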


%\subsection{Proof of Theorem~\ref{mainthm2}}
\begin{proof}
%Part (a) follows from Proposition~\ref{prop_sigmaG}.
%
%For the second part 
%We argue similarly to the proof of Theorem~\ref{mainthm1}.
%The preimage $\pi^{-1}(S')$ is a connected component of some $\CM_{\sigma_G}(v')$.
Since $\pi$ (restricted to $\pi^{-1}(S')$) is a $\BG_m$-gerbe with Brauer class $\alpha$,
the universal equivariant object on $\CM_{\sigma_G}(v)^G \times S$ restricted to $\pi^{-1}(S')\times S$ descends to an $\alpha \times 1$-twisted $1 \times G$-equivariant universal family $\CE$ on $S' \times S$. 
Arguing as in Theorem~\ref{mainthm1} shows that the associated Fourier--Mukai transform
$\F_{\CE} \colon D^b(S',\alpha) \to D^b(S)_{G}$
is an equivalence. \end{proof}

%\subsection{Proof of Theorem~\ref{mainthm3}}
%The claim follows from Theorem~\ref{thm:summary}. \qed

\subsection{Proof of Theorem~\ref{mainthm4}}
For every $v' \in R_v$ consider the natural morphism
\begin{equation} \label{etale map} M_{\sigma_G}(v') \to M^G. \end{equation}
By Theorem~\ref{thm:summary} this is a $H$-torsor over a connected component of $M^G$, where $H$ is the stabilizer of $v'$ under the $G^{\vee}$-action on $\Lambda_{(S',\alpha)}$.
%for the stabilizer $H \subset G^{\vee}$ of $v'$ under the $G^{\vee}$-action. %in $G^{\vee}$ on $M_{\sigma_G}(v')$.
In particular, $H$ acts freely on $M_{\sigma_G}(v')$.

Assume first that the induced stability condition $\sigma_G$ lies in the distinguished component $\Stab^{\dagger}(S')$.
Since $S'$ is a K3 surface, this implies that $M_{\sigma_G}(v')$ is an irreducible holomorphic symplectic variety.
By the second part of Proposition~\ref{prop:calabi-yau}
the group $H$ acts symplectically on $M_{\sigma_G}(v')$ and thus by the holomorphic Lefschetz fixed point formula every non-trivial element must have a fixed point.
This shows that $H=1$ and that \eqref{etale map} is an isomorphism onto its image.
In the general case, the main result of \cite{Markman2} implies that
$\oplus_{i} H^{0,i}(M_{\sigma_G}(v'))$ is generated by (the conjugate of) the class of a symplectic form,
so by the holomorphic Lefschetz fixed point formula we again obtain $H=1$.
In any case, the morphism \eqref{equation_proof_thm1} is a trivial $G^{\vee}$-torsor over its image.
Since $G$ is cyclic, every point of $M^G$ is $G$-linearizable hence \eqref{equation_proof_thm1} is also surjective. %Moreover, every point of $M^G$ has precisely $G^{\vee}$ preimages. 
This shows the claim. \qed

\begin{comment}
By Proposition~\ref{prop:induced_Stab} below the induced stability $\sigma_G$ lies in $\Stab^{\dagger}(S)$.
Since $S'$ is a K3 surface and $\sigma_G$ is distinguished,
for every $v' \in R_v$ the moduli space $M_{\sigma_G}(v')$ is an irreducible holomorphic symplectic variety.
Consider the map
\begin{equation} \label{etale map} M_{\sigma_G}(v') \to M^G \end{equation}
which is a $H$-torsor over a connected component of $M^G$, where $H$ is the stabilizer of $v'$ under the $G^{\vee}$-action on $\Lambda_{(S',\alpha)}$.
%for the stabilizer $H \subset G^{\vee}$ of $v'$ under the $G^{\vee}$-action. %in $G^{\vee}$ on $M_{\sigma_G}(v')$.
In particular, $H$ acts freely on $M_{\sigma_G}(v')$.
However, by the second part of Proposition~\ref{prop:calabi-yau}
the group $H$ acts symplectically and thus by the holomorphic Lefschetz fixed point formula every non-trivial element must have a fixed point.
This shows that $H=1$ and that \eqref{etale map} is an isomorphism onto its image.
Hence \eqref{equation_proof_thm1} is a trivial $G^{\vee}$-torsor over its image.
Since $G$ is cyclic, every point of $M^G$ is $G$-linearizable hence \eqref{equation_proof_thm1} is also surjective. %Moreover, every point of $M^G$ has precisely $G^{\vee}$ preimages. 
This shows the claim. \qed
\end{comment}

\section{Existence and properties of auto-equivalences} \label{sec:symplectic surfaces2}
Let $S$ be a symplectic surface.
In this section we tie up some loose ends
in order to make the theorems we proved in the last section effective in practice.
After some preliminary notation, we will consider the following topics:
\begin{enumerate}
\item[(i)] Given a $G$-fixed distinguished stability condition $\sigma \in \Stab^{\dagger}(S)$
we will show that the induced stability condition is distinguished, at least if the equivalence
arises from a universal family. This is useful, because for distinguished stability conditions the moduli spaces of objects are well-understood.
\item[(ii)] We will prove that any symplectic action on a moduli space of stable objects on a K3 surface
is induced by an action on the derived category (Proposition~\ref{mainprop}).
\end{enumerate}

\subsection{Mukai lattice} \label{subsec:Mukai lattice}
The even cohomology of the symplectic surface $S$,
\[ \Lambda = H^{2 \ast}(S,\BZ) = H^0(S,\BZ) \oplus H^2(S,\BZ) \oplus H^4(S,\BZ), \]
admits a non-degenerate pairing, called the \textit{Mukai pairing}, defined by
\[ \langle (r_1, D_1, n_1) , (r_2, D_2, n_2) \rangle = -r_1 n_2 - r_2 n_1 + \int_S D_1 \cup D_2. \]
We will also write $\alpha \cdot \beta$ for $\langle \alpha, \beta \rangle$.
For any $E,F \in D^b(S)$ we have
\[ v(E) \cdot v(F) = -\chi(E,F) \]
where $v(E) = \ch(E) \sqrt{\td(S)}$ is the \textit{Mukai vector} of $E$.
%Algebraic classes, i.e.\ classes of the form $v(E)$ for some $E \in D^b(S)$, define the sublattice $\Lambda_{\mathrm{alg}} \subset \Lambda$. 

\subsection{Stability conditions} \label{Subsection_stab_conditions}
Given a stability condition $\sigma = (\CA, Z) \in \Stab^\dagger(S)$ in the distinguished component
we will identify the stability function
\[ Z \colon \Lambda_{\mathrm{alg}} \to \BC \]
with the corresponding element in $\Lambda_{\mathrm{alg}} \otimes \BC$ under the Mukai pairing.

Let $\CP(S) \subset \Lambda_{\mathrm{alg}} \otimes \BC$ be the open subset of elements whose real and imaginary part span a positive-definite $2$-plan,
let $\CP^+(S) \subset \CP(S)$ be the connected component which 
contains $e^{i \omega}$ for an ample class $\omega$, and let
\[ \CP^+_{0}(S) = \CP^{+}(S) \setminus \bigcup_{\substack{\delta \in \Lambda_{\mathrm{alg}} \\ \delta \cdot \delta = -2}} \delta^{\perp}. \]
Bridgeland \cite{BridgelandK3} proved that
\begin{equation} \pi \colon \Stab^\dagger(S) \to \CP_0^{+}(S),\ \sigma = (\CA, Z) \mapsto Z \label{covering_map} \end{equation}
is a covering map. His results were generalized to the twisted case in \cite{HuyMacStelStabGen}.

\subsection{Induced stability conditions} 
\label{sec:induced stability}
Let $\sigma \in \Stab^{\dagger}(S)$ be a stability condition and let $G$ be a finite group which acts on $D^b(S)$.
We assume the conditions (i), (ii) and (iii) of Section~\ref{subsec:main results} are satisfied.
Suppose we are given an equivalence
\[ \F_{\CE} \colon D^b(S',\alpha) \to D^b(S)_G \]
induced from a universal family $\CE$ as in Theorem~\ref{mainthm1} or Theorem~\ref{mainthm2}. 
%We show that $\Phi$ sends $\Stab^{\dagger}(S)^G$ to distinguished stability conditions on $S'$.
\begin{prop} \label{prop:induced_Stab}
%Let $\sigma \in \Stab^{\dagger}(S)^G$.
We have $\F_{\CE}^{-1}(\sigma_G) \in \Stab^{\dagger}(S')$.
\end{prop}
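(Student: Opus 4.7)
The plan is to reduce, via the $\widetilde{\textup{GL}^+}(2,\BR)$-action, to showing that every skyscraper sheaf on $S'$ is stable of phase $1$ for $\F_{\CE}^{-1}(\sigma_G)$, then to invoke the standard characterization of the distinguished component by stability of point objects.

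First I would check the stability of the images of point sheaves. By construction, the Fourier--Mukai kernel $\CE$ is the restriction to $S' \times S$ of the universal family on $\CM_{\sigma_G}(v') \times S$, so for every closed point $x \in S'$ the object $\F_{\CE}(\BC_x) = \CE_x$ represents a point of the underlying moduli space. Because $v$ is $(G,\sigma)$-generic, Lemma~\ref{Lemma_stability} ensures that each $\CE_x$ is $\sigma_G$-stable. Moreover, all $\CE_x$ have Mukai vector equal to the fixed class $v' \in K(D^b(S)_G)$, so $Z_G(\CE_x)$ is independent of $x$; hence the $\CE_x$ share a common phase $\phi_0$.

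Next I would rotate. The group $\widetilde{\textup{GL}^+}(2,\BR)$ is connected and acts on $\Stab(D^b(S',\alpha))$, so its action preserves each connected component. Choosing an element that shifts phases by $1 - \phi_0$, I replace $\F_{\CE}^{-1}(\sigma_G)$ by a stability condition under which every skyscraper sheaf $\BC_x$, $x \in S'$, is stable of phase $1$. Proving that the rotated stability condition lies in $\Stab^{\dagger}(S',\alpha)$ is equivalent to proving the original statement.

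Finally, I would apply the characterization of the distinguished component via skyscraper stability: a stability condition on $D^b(S',\alpha)$ under which every $\BC_x$ is stable of phase $1$ is geometric, and hence lies in $\Stab^{\dagger}(S',\alpha)$. For untwisted K3 surfaces this is essentially Proposition~10.3 of \cite{BridgelandK3}, extended to the twisted setting by \cite{HuyMacStelStabGen}; for abelian surfaces the argument is analogous, and in fact simpler since the $(-2)$-root hyperplanes are absent from $\CP^+(S')$. The main obstacle is precisely this final step: one has to verify that the ``point objects stable of phase $1$'' condition forces the heart of $\F_{\CE}^{-1}(\sigma_G)$ to be a tilt $\CA_{\omega',\beta'}$ of $\Coh(S',\alpha)$ for some $\omega',\beta'$, which requires fine control of the filtration of twisted coherent sheaves by the $t$-structure coming from $\F_{\CE}^{-1}(\sigma_G)$.
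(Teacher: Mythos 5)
Your outline coincides with the paper's: the objects $\CE_x$ are $\sigma_G$-stable because $\CE$ is (the restriction of) the universal family of a moduli space of stable objects, hence the skyscraper sheaves $\BC_x$ are stable of a common phase for $\tau = \F_{\CE}^{-1}(\sigma_G)$, and one then wants to run Bridgeland's geometric characterization after rotating by $\widetilde{\textup{GL}^+}(2,\BR)$. (A small inaccuracy: the stability of $\CE_x$ does not come from Lemma~\ref{Lemma_stability} and $(G,\sigma)$-genericity per se, but simply from the fact that the family parametrizes $\sigma_G$-stable objects; genericity enters only in the setting of Theorem~\ref{mainthm2} via Lemma~\ref{lemma_key}.)

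The genuine gap is in your last step, where you invoke ``all $\BC_x$ stable of phase $1$ $\Rightarrow$ geometric'' as a black box. Before one can normalize the central charge of $\tau$ to the form $\langle \exp(\beta+i\omega), -\rangle$ with $\omega^2>0$ by an element of $\widetilde{\textup{GL}^+}(2,\BR)$, one must know that the real and imaginary parts of $Z_\tau$ span a positive-definite $2$-plane in $\Lambda'_{\mathrm{alg}}\otimes\BR$. This positivity is indispensable downstream: the argument of Bridgeland's Lemma~10.1 only yields $\omega\cdot [C]>0$ for every curve $C\subset S'$, and without $\omega^2>0$ this does not give ampleness via Nakai--Moishezon (on a surface of higher Picard rank there are real classes with $\omega^2\le 0$ that pair positively with every curve class). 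Now $Z_\tau = Z\circ p$, where $p\colon \Lambda'\to\Lambda$ is the cohomological transform of the kernel, so positivity is \emph{not} inherited for free from $\sigma$; the paper establishes it via Lemma~\ref{lem:cohomology}, which shows that $Z_\tau$ factors through the sublattice $L\subset\Lambda'$ and that $p$ and $q$ define embeddings of lattices $L(|G|)\hookrightarrow\Lambda^G$ and $\Lambda^G(|G|)\hookrightarrow L$, whence the $2$-plane attached to $Z_\tau$ is positive-definite because the one attached to $Z$ is. This is precisely why the paper applies ``the reasoning of the proof of'' \cite[Prop.\ 10.3]{BridgelandK3} rather than the proposition itself: the positivity of the induced central charge is supplied as an external input before the two steps (ampleness of $\omega$, identification of the heart with the tilt $\CA(\omega,\beta)$) are rerun. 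Your proposal never addresses this point, and it is the main substantive content of the proof beyond the observations you do make.
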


We begin with a description how the Mukai lattices $\Lambda$ and $\Lambda'$ of the surfaces $S$ and $S'$ interact.
Consider the composition of the forgetful and linearization functors with the equivalence $\F_{\CE}$:
\[
\FM_{p(\CE)} = p \circ \F_{\CE}, \quad \FM_{p(\CE)^{\vee}[2]} = \F_{\CE}^{-1} \circ q,
\]
where we have also written $p$ for the forgetful functor of $D^b(S' \times S)_{1 \times G}$.
%The composition of the Fourier--Mukai transform $\F_{\CE}$ with the forgetful functor $p$ is
%the Fourier--Mukai transform with respect to the kernel $p(\CE)$ (obtained by applying the forgetful functor on $D^b(S' \times S)_{1 \times G}$).
%Similarly, $\F_{\CE}^{-1} \circ q$ is the (left and right) adjoint of $\FM_{p(\CE)}$ given as the Fourier--Mukai transform with respect to $p(\CE)^{\vee}[2]$.
Passing to cohomology this yields morphisms
\[ p \colon \Lambda' \to \Lambda, \quad q \colon \Lambda \to \Lambda' \]
which are both left and right adjoints of each other. The composition is $pq = \oplus_g g$. 
Let
\[ L \subset \Lambda' \]
denote the saturation of the sublattice $q(\Lambda)$.
%As before we will write $L_{\mathrm{alg}}$ et cetera for the algebraic part of the lattices ($p$ and $q$ preserve the algebraic parts).

Given a lattice $M$ we write $M(n)$ for the lattice obtained by multiplying the intersection form with the integer $n$.
\begin{lemma} \label{lem:cohomology}
%We have the finite-index sublattice 
%\[ L \oplus L^{\perp} \subset \Lambda'. \]
We have the finite-index sublattices 
\[ \Lambda^G \oplus (\Lambda^G)^{\perp} \subset \Lambda, \quad \  L \oplus L^{\perp} \subset \Lambda'. \]
The map $p$ vanishes on $L^{\perp}$ and defines an embedding of lattices $p \colon L(|G|) \hookrightarrow \Lambda^G$. % of the same rank.
The map $q$ vanishes on $(\Lambda^G)^{\perp}$ and defines an embedding of lattices $q \colon \Lambda^G(|G|) \hookrightarrow L$.
\end{lemma}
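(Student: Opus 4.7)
The plan is to derive the whole lemma from the adjunction identity together with the already-established relation $pq = \sum_{g \in G} g_*$ on $\Lambda$. Since $p$ is both left and right adjoint to $q$, on cohomology the Mukai pairing satisfies
\[
\langle p(y), \alpha \rangle_{\Lambda} = \langle y, q(\alpha)\rangle_{\Lambda'}
\]
for all $y \in \Lambda'$, $\alpha \in \Lambda$. A second input I would use is that the forgetful functor sends a $G$-equivariant object $(F,\phi)$ to $F$, whose isomorphism class is automatically $G$-invariant because each $\phi_g$ provides an isomorphism $F \cong gF$; hence $p(\Lambda') \subset \Lambda^G$. Non-degeneracy of the Mukai pairings on $\Lambda$ and $\Lambda'$ is the other standing input.

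With these tools, the vanishing of $q$ on $(\Lambda^G)^\perp$ and of $p$ on $L^\perp$ is immediate. For $\alpha \in (\Lambda^G)^\perp$ and any $y \in \Lambda'$, adjunction gives $\langle y, q(\alpha)\rangle_{\Lambda'} = \langle p(y), \alpha\rangle_{\Lambda} = 0$ since $p(y) \in \Lambda^G$, and non-degeneracy on $\Lambda'$ forces $q(\alpha) = 0$. The dual argument with $\beta \in L^\perp$ and $q(\alpha) \in L$ shows $p(\beta) = 0$.

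Next I would compute the induced pairings. For $\alpha, \beta \in \Lambda^G$, the relation $pq(\alpha) = |G|\alpha$ yields
\[
\langle q(\alpha), q(\beta)\rangle_{\Lambda'} = \langle pq(\alpha), \beta\rangle_{\Lambda} = |G|\langle \alpha, \beta\rangle_{\Lambda},
\]
so $q\colon \Lambda^G(|G|) \to L$ is an isometric embedding, and it is injective since $q(\alpha)=0$ implies $|G|\alpha = pq(\alpha)=0$. The corresponding statement for $p$ hinges on the identity $qp|_L = |G|\cdot\id$, which I expect to be the main subtlety. I would establish it by first noting that since $q$ kills $(\Lambda_\BQ^G)^\perp$, one has $L_\BQ = q(\Lambda_\BQ) = q(\Lambda_\BQ^G)$; applying $q$ to the identity $pq(\alpha)=|G|\alpha$ shows that $qp$ acts as $|G|\cdot\id$ on the $\BQ$-span $L_\BQ$, and therefore on the lattice $L$. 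The adjunction formula then gives $\langle p(y), p(z)\rangle_{\Lambda} = \langle y, qp(z)\rangle_{\Lambda'} = |G|\langle y, z\rangle_{\Lambda'}$ for $y,z \in L$, producing the isometric embedding $p\colon L(|G|) \hookrightarrow \Lambda^G$ (injectivity is again automatic, since $p(y)=0$ implies $qp(y)=|G|y = 0$).

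Finally I would establish the direct-sum decompositions. Each $\rho_g$ is an equivalence and so preserves the Euler pairing, hence $G$ acts by isometries on $\Lambda$. A standard averaging argument then shows that the Mukai form restricted to $\Lambda^G$ is non-degenerate, so $\Lambda^G \cap (\Lambda^G)^\perp = 0$ and $\Lambda^G \oplus (\Lambda^G)^\perp \subset \Lambda$ has finite index. Non-degeneracy of $L$ is transferred from that of $\Lambda^G$ via the isometric inclusion $\Lambda^G(|G|) \hookrightarrow L$, which has finite cokernel rationally; this yields $L \oplus L^\perp \subset \Lambda'$ with finite index. The bulk of the work really lies in the identification $qp|_L = |G|\cdot\id$; once that is secured, the rest is formal manipulation with the adjunction identity.
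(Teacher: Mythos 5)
Your proof is correct and follows essentially the same route as the paper's: adjunction of $p$ and $q$, the identity $pq=\sum_{g}g_{*}$ (hence $|G|\cdot\id$ on $\Lambda^G$), and non-degeneracy, with your explicit isolation of $qp|_{L}=|G|\cdot\id$ being only a cosmetic reorganization of the paper's direct substitution $v'=q(v)$, $w'=q(w)$ with $v,w\in\Lambda^G\otimes\BQ$. The one place where the paper's phrasing is preferable is the inclusion $p(\Lambda')\subset\Lambda^G$: since $p$ acts on the full Mukai lattice and not just on Mukai vectors of objects, one should argue at the level of kernels (the $1\times G$-linearization of $\CE$ gives $\CE_g\circ p(\CE)\cong p(\CE)$, hence $g_{*}\circ p=p$ on all of $\Lambda'$) rather than via $G$-invariance of the underlying object of $\F_{\CE}(A)$, though this is a one-line fix.
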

\begin{proof}
The isomorphism of correspondences
\[ \rho_g \circ p(\CE) = (\id \times \rho_g)(p(\CE)) 
%= p\left( (\id \times \tilde{\rho}_g) \CE \right) 
\cong p(\CE), \]
%\[ \rho_g \FM_{p(\CE)} = \rho_g \circ p \circ \F_{\CE} = p \circ \bar{\rho}_g \circ \F_{\CE} \cong p \circ F_{\CE} = \FM_{p(\CE)} \]
%where $\tilde{\rho}_g$ is the lift of $\rho_g$ to the equivariant category as in Proposition~\ref{rem:successive_quotients},
shows that the image of $p \colon \Lambda' \to \Lambda$ lies in the invariant lattice $\Lambda^G$.
By adjunction it follows that $q$ vanishes on $(\Lambda^G)^{\perp}$.
In particular, for all
$v',w' \in L$ we can write $v'=q(v)$ and $w' = q(w)$ where $v, w \in \Lambda^G \otimes \BQ$.
We obtain
\[ \langle v',w' \rangle_{\Lambda'} = \langle qv, qw \rangle_{\Lambda'} = \langle v, pq w \rangle_{\Lambda} = |G| \langle v, w \rangle_{\Lambda}. \]
Since $\Lambda^G$ is non-degenerate, this shows that $L$ is non-degenerate
and we have the finite-index sublattice $L \oplus L^{\perp} \subset \Lambda'$.
It also shows that $q$ defines an embedding $\Lambda^G(|G|) \hookrightarrow L$.
Moreover, with the same notation as above we have
\begin{align*}
\langle pv', pw' \rangle_{\Lambda} = \langle pqv, pqw \rangle_{\Lambda}
= |G| \langle v, pqw \rangle_{\Lambda}
= |G| \langle qv, qw \rangle_{\Lambda'} = |G| \langle v',w' \rangle_{\Lambda'}.
\end{align*}
We find that $p$ defines an embedding $L(|G|) \hookrightarrow \Lambda^G$.
For every $w' \in L^{\perp}$ we have $\langle pw', v \rangle_{\Lambda} = \langle w', qv \rangle_{\Lambda'} = 0$ for all $v \in \Lambda$, 
which shows that $pw'= 0$.
\end{proof}

If $G$ is abelian, then one can show that $L$ is the invariant lattice for the action
of the dual group on $D^b(S')$, that is $L=(\Lambda')^{G^\vee}$. 

\begin{proof}[Proof of Proposition~\ref{prop:induced_Stab}]
To ease the notation we assume that the Brauer class $\alpha$ vanishes and hence that we work with the usual derived category $D^b(S')$. The case with non-trivial Brauer class works parallel.

Let $\tau = \F_{\CE}^{-1}(\sigma_G)$.
By construction the functor $\F_{\CE}$ is induced from a universal family $\CE\in D^b(S' \times S)_{1\times G}$ of $\sigma_G$-stable objects.
Since $\CE_x$ is $\sigma_G$-stable for all $x\in S'$, the skyscraper sheaves $\BC_x$ are $\tau$-stable for all $x \in S'$.

Let us consider the central charge $Z_{\tau}$ of the stability condition $\tau$. By definition,
it is given by the composition
\[
Z_{\tau} \colon \Lambda' \xrightarrow{p}\Lambda^G_{\mathrm{alg}}\subset \Lambda_{\mathrm{alg}}\xrightarrow{Z} \BC.
\]
By Lemma~\ref{lem:cohomology} the central charge $Z_{\tau}$ factors over $L$
and the real and imaginary part of $Z_{\tau}$ span a positive-definite 2-plane, because $\Re(Z)$ and $\Im(Z)$ do so.

We want to apply now the reasoning of the proof of \cite[Prop.\ 10.3]{BridgelandK3}.
As in \cite[Sec.\ 10]{BridgelandK3}, there is a unique $g\in \widetilde{\mathrm{GL}^+}(2,\BR)$ such that the central charge of $g\tau$ is of the form $\exp(\beta + i \omega)$ for some $\beta, \omega \in \textup{NS}(S')$ with $\omega^2>0$, and such that the sheaves $\BC_x$ have phase 1. 
Then as in the first step in \cite[Prop.\ 10.3]{BridgelandK3} we apply \cite[Lem.\ 10.1]{BridgelandK3} to conclude that for any curve $C\subset S'$ and torsion sheaf $\CE$ supported on $C$ satisfies $\Im Z_\tau (\CE)>0$ which implies $\omega \cdot [C]>0$. Combining this with $\omega^2>0$ we find that the class $\omega$ is ample. 

Invoking again \cite[Lem.\ 10.1]{BridgelandK3} we find further that the heart $\CB$ of $g\tau$ is the tilt of the torsion pair $(\CT,\CF)$, where $\CT=\Coh(S')\cap \CP(0,1]$ and $\CF=\Coh(S')\cap \CP(-1,0]$ and $\CP$ is the slicing corresponding to $g\tau$ (for more on tilting we refer to Appendix~\ref{sec:app_hearts} or \cite{HappelReitenSmalo}). Arguing as in the second step of the proof of \cite[Prop.\ 10.3]{BridgelandK3} we deduce that the torsion pair $(\CT,\CF)$ coincides with the torsion pair $(\CT_{\omega,\beta},\CF_{\omega,\beta})$ associated with the classes $\omega,\beta$ which is constructed in \cite[Sec.\ 6]{BridgelandK3}. With the notation of \emph{loc.\ cit.\,} this yields that $\CB=\CA(\omega,\beta)$ and therefore $g\tau=\sigma_{\omega,\beta}$. In particular, $\tau \in \Stab^\dagger(S')$ and the proof is finished. 
\end{proof}

\subsection{Proof of Proposition~\ref{mainprop}} \label{subsec:proof mainprop}
Let $S$ be a K3 surface with a stability condition $\sigma' = (\CA',Z') \in \Stab^{\dagger}(S)$.
Let $M$ be a fine\footnote{The case of a coarse moduli space works similarly.} moduli space of $\sigma'$-stable objects of Mukai vector $v \in \Lambda$
and let $G$ be a finite group which acts symplectically on $M$.
Consider the Hodge isometry
\[ \Lambda \supset v^{\perp} \cong H^2(M,\BZ). \]
By \cite[Thm.\ 26]{Mongardi2} 
%See also Lemma 23.
the induced action of $G$  on $H^2(M,\BZ)$ acts trivially on the discriminant lattice. Hence, the action lifts to an action on $\Lambda$ which fixes the vector $v$ and acts by Hodge isometries.
Since $G$ acts symplectically on $M$, the action on $\Lambda$ preserves the class of the symplectic form.

Let $H \in H^2(M,\BZ)$ be a $G$-invariant ample class (obtained for example by averaging any ample class over its images under $G$). Recall the wall and chamber decomposition of $\Stab^\dagger(S)$ associated to $v$ \cite[Sec.\ 9]{BridgelandK3} and denote by $\CC$ the chamber which contains $\sigma'$. From \cite[Thm.\ 1.2]{BMMMP} we infer that there exists a stability condition $\sigma=(\CA,Z) \in \CC$ such that the associated divisor class $\ell_{\sigma}$ equals the class $H$ (for the construction and properties of the divisor classes $\ell_\sigma$ we refer to \cite{BM}). By definition the central charge $Z$ is contained in the $\BC$-vector space $\Span_\BC \langle H, v \rangle \subset \Lambda \otimes \BC$ and hence fixed by $G$. Moreover, since $\sigma$ and $\sigma'$ lie in the same chamber, the moduli functors $\CM_{\sigma}(v)$ and $\CM_{\sigma'}(v)$ agree. This proves $M=M_{\sigma}(v)$. 

Hence we have obtained a subgroup $G \subset O(\Lambda)$ which acts by Hodge isometries, preserves the class of the symplectic form and $Z$.
An application of \cite[Prop.\ 1.4]{HuybrechtsConway} shows that this action on $\Lambda$ is induced by a subgroup
$G \subset \Aut D^b(S)$
which preserves $\sigma$ and acts symplectically.
Using part (b) of Lemma~\ref{lem:obstruction to action} there is a surjection $\tilde{G} \to G$ from a finite group $\tilde{G}$ which acts on $D^b(S)$
with image $G$ in $\Aut D^b(S)$.
%By Proposition~\ref{prop:faithful action} we may further assume that $\tilde{G}$ acts faithfully. 
%To conclude, observe that 
By construction the action of $\tilde{G}$ preserves $\sigma$ and $v$ and hence induces an action on $M = M_{\sigma}(v)$.
Since the restriction map $\Aut(M) \to \mathrm{O}( H^2(M,\BZ) )$ is injective \cite[Lem.\ 7.1.3]{MonThesis}, the action of $\tilde{G}$ on $M$ factors through the given action by $G$.
This proves the first part.

For the second part, assume that $G \subset \Aut M$ is cyclic.
Then the action of $\BZ_n$ on $M$ has at least one fixed point
which corresponds to a $\BZ_n$-invariant simple object $F$.
Hence the claim follows from \cite[Sec.\ 4.8]{Notes}. \qed
%part (d) of Lemma~\ref{lem:obstruction to action}. \qed

\section{Examples} \label{sec:examples}
We consider a series of examples to illustrate our methods.
For simplicity we restrict ourselves mostly to cyclic groups acting on the derived category of a K3 surface.

\subsection{Classification} \label{subsec:classification}
Given a variety $X$ and an element $g \in \Aut H^{\ast}(X,\BC)$ of finite order $n$
we define the \emph{frameshape} of $g$ as the formal symbol
\[ \pi_g = \prod_{a|n} a^{m(a)} \label{frameshape} \]
that encodes the characteristic polynomial of $g$ via %such that
\[ \det( t \cdot \id- g) = \prod_{a|n} (t^a - 1)^{m(a)}. \]
Symplectic auto-equivalences of K3 surfaces of finite order preserving a stability condition
are neatly classified in terms of the frameshape of their action on cohomology.
There are $42$ frameshapes and at most $82$ $\mathrm{O}_+(\Lambda)$-conjugacy classes
which can occur \cite{CHVZ}. The invariant lattices can be found in \cite[App.\ C]{PV}.
For example, in order $2$ there are three cases:
$1^{8} 2^{8}$, $1^{-8} 2^{16}$, and $2^{12}$,
each in a unique conjugacy class. Symplectic involutions of K3 surfaces have frameshape  $1^8 2^8$,
while the others are strictly of derived nature.

\subsection{The dual action of a geometric involution} \label{subsec:dual action}
Let $\iota \colon S \to S$ be a symplectic involution of a symplectic surface with at least one fixed point
and let $G = \BZ_2$ be the group generated by $\iota$.
Hence we are in one of the following two cases:
\begin{enumerate}
\item[(i)] $S$ is an abelian surface and $\iota$ is multiplication by $(-1)$, or
\item[(ii)] $S$ is a K3 surface and $\iota$ is a \emph{Nikulin involution} \cite{SvG}.
\end{enumerate}
The number $r$ of fixed points of $G$ is $16$ and $8$ respectively,
and in both cases the minimal resolution $S'$ of $S/\BZ_2$ is a K3 surface.
In the fiber diagram
\[
\begin{tikzcd}
Z \ar{r}{\alpha} \ar{d}{\beta} & S' \ar{d} \\
S \ar{r} & S/\BZ_2
\end{tikzcd}
\]
the map $\beta$ is the blowup at the fixed points and $\alpha$ identifies $S'$ with the fixed locus $\Hilb^2(S)^{G}$.
By \cite{BKR} (or Theorem~\ref{mainthm1}) we have the equivalence
$\Phi = \beta_{\ast} \alpha^{\ast} \colon D^b(S') \to D^b(S)_G$.

Let $\mathsf{Q} \colon D^b(S') \to D^b(S')$ be the involution given by the action of
the dual group $G^{\vee}$. By applying both sides to skyscraper sheaves one finds\footnote{See also \cite{KM} for a related discussion of this involution.}
\[
\mathsf{Q} = \T_{\CO_S(-\delta)} \circ \prod_{i=1}^{r} \ST_{\CO_{E_i}(-2)}
\]
where
we let
$\ST_{E}(F) = \mathrm{Cone}( \Hom^{\bullet}(E,F) \otimes E \to F )$
denote the spherical twist by the spherical object $E$,
and $\T_{\CL}(E) = E \otimes \CL$
is the twist by a line bundle $\CL$.
%and 
%\[ \ST_{E}(F) = \mathrm{Cone}( \Hom^{\bullet}(E,F) \otimes E \to F ) \]
%is the spherical twist by the spherical object $E$.
The $E_i$ are the exceptional divisors of the resolution $S'$ and
$\delta = \frac{1}{2} \sum_{i=1}^{r} E_i$.

The involution $\mathsf{Q}$ fixes skyscraper sheaves of points not on the exceptional divisor
and sends $\CO_{S'}$ to $\CO_{S'}(\delta)$
as well as $\CO_{E_i}(-1)$ to $\CO_{E_i}(-2)[1]$. For $x \in E_i$ the action exchanges the two distinguished triangles
\begin{equation} \label{abc}
\begin{gathered} 
 \CO_{E_i}(-1) \to \BC_x \to \CO_{E_i}(-2)[1]  \\
\CO_{E_i}(-2)[1] \to \mathsf{Q}(\BC_x) \to \CO_{E_i}(-1).
\end{gathered}
\end{equation}
The frameshape of $\mathsf{Q}$ is
$1^{-8} 2^{16}$ if $S$ is an abelian surface, and $1^8 2^8$ if $S$ is a K3 surface.\footnote{
On the Mukai lattice the involution $\mathsf{Q}$ acts by
\[ (1,0,0) \mapsto (1,\delta, -r/4), \quad (0,E_i, 0) \mapsto (0,-E_i, 1), \quad (0,0,1) \mapsto (0,0,1). \]
}
%\[
%\pi_g = 
%\begin{cases}
%1^{-8} 2^{16} & \text{ if } S \text{ is an abelian surface, } \\
%1^{8} 2^{8} & \text{ if } S \text{ is a K3 surface}.
%\end{cases}
%\]

As an example of a fixed stack computation, consider the moduli space
\[ \CM = \CM_{\sigma_G}(0,0,1) \]
where $\sigma_G$ is induced by a $G$-fixed stability condition on $D^b(S)$
which is equivalent to Gieseker stability for the Mukai vector $v=(0,0,1)$.
The $\BC$-points of $\CM$ correspond to the objects
\[ \BC_x \text{ for all } x \in S', \quad \mathsf{Q}(\BC_x) \text{ for all } x \in E_i, \quad \CO_{E_i}(-1) \oplus \CO_{E_i}(-2)[1]. \]
In this list the $\BC_x$ for all $x \notin E_i$ and the $\CO_{E_i}(-1) \oplus \CO_{E_i}(-2)[1]$
are invariant under $\mathsf{Q}$.
Every $\BC_x$ for $x \notin E_i$ admits two distinct $G^{\vee}$-linearizations, while
$\CO_{E_i}(-1) \oplus \CO_{E_i}(-2)[1]$ admits only one.
We find that the good moduli space of $\CM$ is the quotient $S/\BZ_2$, and that the good moduli space of the fixed stack $\CM^{G^{\vee}}$ is $S$.
Moreover, the forgetful map $\epsilon \colon \CM^{G^{\vee}} \to \CM$ induces the quotient map $S \to S/\BZ_2$ on good moduli spaces.
Applying Theorem~\ref{mainthm2} we obtain the equivalence
\begin{equation} D^b(S) \xrightarrow{\cong} D^b(S')_{G^{\vee}} \label{SSprime equi} \end{equation}
where the Brauer class $\alpha$ is trivial since $S/\BZ_2$ is a fine moduli space away from the singularities. (The equivalence \eqref{SSprime equi} also follows by
a result of Elagin 
\cite[Thm.\ 1.3]{Elagin}.)
%(The equivalence \eqref{SSprime equi} follows also by an application of Proposition~\ref{Reciprocity}.)

Among other things this example shows that while the good moduli space of $\CM$ may be singular, its fixed stack has a smooth proper good moduli space (as guaranteed by Proposition~\ref{prop_sigmaG}).
We also see that $\epsilon$ is not proper, because it does not satisfy the valuative criterion of properness.

\subsection{Involutions on a genus 2 K3 surface} \label{subsec:involution on genus 2}
Let $\pi \colon S \to \p^2$ be a K3 surface obtained as the double cover of a sextic plane curve,
and let $g \colon S \to S$ be a symplectic involution which fixes the hyperplane class $H \in \Pic(S)$.
%In the last section we have seen a description of the equivariant category $D^b(S)_{\BZ_2}$.
In this section we will determine the fixed locus of the
moduli spaces of Gieseker semistable sheaves with Mukai vector $(0,H,0)$ and $(0,2H,0)$.
As an application we describe the fixed locus of the induced symplectic birational involution of the resolution of $M_{\sigma}(0,2H,0)$ of O'Grady 10 type.

%Let $g \colon S \to S$ be a symplectic involution preserving the pullback $H$ of the hyperplane class.
Recall that the involution $g$ descends to an involution $g_{\p^2}$ of $\p^2$ which can be choosen to act by $(x,y,z) \mapsto (-x,y,z)$, see \cite[Sec.\ 3.2]{SvG}.
The fixed locus of $g_{\p^2}$ is $p=(1,0,0)$ and the line $x=0$.
Let $C_0$ be the preimage under $\pi$ of the line $x=0$
and let $C_1$ be the preimage of a generic line of the form $\lambda y+\mu z$.
Let also $C \in |\CO(2H)|$ be a curve that is preserved under $g$ but disjoint from the fixed points $p_i$.
These curves are preserved by $g$ and contain $6$, $2$ and $0$ fixed points respectively.
Consider the quotients
\[ C_0' = C_0 / \BZ_2, \quad C_1' = C_1 / \BZ_2 \quad\text{and}\quad C' = C / \BZ_2 \]
which are rational, elliptic, and of genus $3$ respectively.
After reordering the exceptional divisors one has in $\Pic(S')$ the relations\footnote{We denote the class in the Picard group with the same symbol as the underlying curve.}
\begin{gather*}
C_0' = \frac{1}{2} C' - \frac{1}{2} ( E_3 + \ldots + E_8 ), \quad 
C_1' = \frac{1}{2} C' - \frac{1}{2} ( E_1 + E_2 ).
\end{gather*}
Suppose that $S$ is of minimal Picard rank $9$. Then by \cite[Lem.\ 1.10]{SvG} the Picard group of $S'$ has the $\BZ$-basis $C_1', \delta, E_2, \ldots, E_8$.
%\footnote{
%The statement can be seen as follows: Recall from \cite{SvG} that the image of 
%\[ p_{\ast} \colon U^3 \oplus E_8(-1)^2 \oplus <-1>^8 \to H_2(S',\BZ) \]
%is $U(2)^3 \oplus N \oplus E_8(-1)$, where the summand $N$ is the Nikulin lattice spanned by the $E_i$ and $\delta$.
%By \cite[Lem.\ 1.10]{SvG} we have the overlattice
%\[ U(2)^3 \oplus N \hookrightarrow U^3 \oplus E_8(-1) \]
%obtained by adjoining $6$ half-vectors.
%In our case we can take $H = e_1 + f_1 \in U \subset H^2(S,\BZ)$.
%Its image $p_{\ast} q^{\ast} H = e_1 + f_1 \in U(2)$ is represented by the curve $C'$.
%In the minimal Picard rank case, the Picard group of $S'$ is
%\[ U^3 \oplus E_8(-1) \cap \big( ((e_1 + f_1) \BZ \oplus N ) \otimes \BQ \big) . \]
%By writing a general element of $U^3 \oplus E_8(-1)$ by $\sum_i \alpha_i v_i + w$ where $v_i$ is one of the $6$ half-vectors of \cite[Lem.\ 1.10]{SvG} and $w \in U(2)^3 \oplus N$
%one sees that requiring it to be algebraic implies $\alpha_1 = \alpha_2$ and the other $\alpha_i$ to vanish. Since $v_1 + v_2 = C_1' + w'$ for some $w' \in U(2)^3 \oplus N$ this completes the claim.}
The map on cohomology 
$H^{\ast}(S', \BZ) \to H^{\ast}(S, \BZ)$
induced by the composition
$D^b(S') \xrightarrow{\Phi} D^b(S)_G \to D^b(S)$ is given by
\[
1 \mapsto 1- \pt, \quad \pt \mapsto 2 \pt, \quad E_i \mapsto \pt, \quad \delta \mapsto 4 \pt, \quad C' \mapsto 2H, \quad C_1' \mapsto H- \pt
\]
where we let $\pt$ denote the class of a point on both $S$ and $S'$.

%\[ M_{\sigma}(0,H,0), \quad M_{\sigma}(0,2H,0) \]
%using the equivariant category $D^b(S)_G$.

Let $\sigma$ be a generic $G$-fixed stability condition on $S$ which for vectors
$(0,kH,0)$ is equivalent to Gieseker stability. %yields (i.e.\ a sufficiently large limit of certain Bridgeland stability conditions).
We consider the moduli spaces $M_{\sigma}(0,kH,0)$ for $k=1,2$ and their fixed loci:
%We are interested here in calculating the fixed locus of the 
%good moduli spaces $M_{\sigma}(0,H,0)$ and $M_{\sigma}(0,2H,0)$ (and of its O'Grady 10 resolution).
%
Since $H$ is irreducible on $S$, the coarse moduli space $M_{\sigma}(0,H,0)$ is smooth. Hence by Theorem~\ref{mainthm4} (and using the notation given there)
we have
\[ M_{\sigma}(0,H,0)^G = \bigsqcup_{v' \in \overline{R}_H} M_{\sigma_G}(v'). \]
A direct calculation shows that
there is a unique vector in $\overline{R}_H$ of square $0$ given by $C_1'+E_1$, and $28$ vectors of square $-2$. Therefore,
\[ M_{\sigma}(0,H,0)^G = \widetilde{S} \sqcup (28 \text{ points}) \]
where $\widetilde{S} = M_{\sigma_G}(0,C_1'+E_1,0)$ is a smooth K3 surface. This matches the results of \cite{KMO}.

%\begin{table}
%{\renewcommand{\arraystretch}{1.3}\begin{tabular}{| c | c | c | c |}
%\hline
%\! $v'$\! & $\mathsf{Q} v'$ & $\dim M_{\sigma_G}(v')$ & {\#} \\
%%\hline
%%$\pm[\beta]$ \\
%\hline
%$C_1' + E_1$ & $C_1' + E_2$ & $2$ & $1$ \\
%$C_1' - E_i + E_1 + E_2$ & $C_1' + E_i$ & $0$ & $6$ \\
%$C_1' - E_i + \pt$ & $C_1' + E_1 + E_2 + E_i - p$ & $0$ & $6$ \\
%$C_0' + E_i + \pt$ & $C_0' - \pt + \sum_{3 \leq j \leq 8}^{j \neq i} E_j$ & $0$ & $6$ \\
%$C_0' + \sum_{i \in T} E_i$ & $C_0' + \sum_{i \notin T} E_i$ & $0$ & $10$ \\
%\hline
%%$\lambda_2$ & $3$ & $0$ & $0$ & $0$ & $945$ & $3840$ & $53760$ & $ 138240$ & $1237005$ & $2661120$ \\
%%\hline
%\end{tabular}}
%\caption{The list of vectors $v' \in \overline{R}_H$. The last column denotes the number of elements of a given type,
%$i$ runs from $3, \ldots, 8$ and $T$ runs over subsets of $\{ 3, \ldots, 8 \}$ of size $3$.}
%\vspace{-15pt}
%\end{table}

We turn to $M_{\sigma}(0,2H,0)$. 
Since the moduli space contains strictly semistable objects, we can not apply
Theorem~\ref{thm:summary} directly, but have to account for the semistable locus.
We begin by describing the set $R_{2H}$.
It is given by vectors of the form
\[ v' = C' + \sum_{i=1}^{8} a_i E_i + c \pt \]
where all the $a_i$ are either integers or half-integers,
$\sum_i a_i$ is even and $c=-\sum_i a_i/2$.
Moreover, only vectors satisfying
\begin{itemize}
\item $(v')^2 \geq -2$ (equivalently $\sum_i a_i^2 \leq 3$), or
\item $v' = v_1 + v_2$ with $v_i \in R_H$
\end{itemize} 
contribute to $R_{2H}$.
One finds that $\overline{R}_{2H}$ (i.e.\ modulo $\mathsf{Q}$) consists of the following:
\begin{enumerate}
\item[(i)] The vector $C'$ of square $4$. It can be decomposed in
$28$ different ways as a sum $v_1 + v_2$ with $v_1,v_2 \in R_H$ both of square $-2$,
and in a unique way as $v_1 + v_2$ with $v_1,v_2 \in R_H$ both of square $0$ (given as $C_1' + E_i$).
The moduli space $M_{\sigma_G}(C')$ is of dimension $6$. Its singular locus
is the disjoint union of the product variety $\widetilde{S} \times \widetilde{S}$ and $28$ isolated points.
\item[(ii)] $63$ vectors of square $0$. Each vector can be written in $6$ different ways as a sum of two $(-2)$-vectors in $R_H$.
The moduli space in each case is a K3 surface with $6$ singularities of type $A_1$.
\item[(iii)] $56$ vectors of square $0$, each written uniquely as $v_1 + v_2$ where $v_1$ is of square $0$ (equal to $C_1'+E_1$) and $v_2$ is of square $-2$.
In each case we have $M_{\sigma_G}(v') = M_{\sigma_G}(v_1) = \widetilde{S}$. % which is a smooth K3 surface.
\item[(iv)] $1$ vector of square $0$ obtained as $2 v_1$, where $v_1 = C_1'+E_1 \in R_H$ is of square $0$.
The good moduli space $M_{\sigma_G}(2v_1)$ is $\Sym^2 M_{\sigma_G}(v_1) = \Sym^2 \tilde{S}$.
%hence the symmetric square of a K3 surface.
\item[(v)] $378$ vectors of square $-4$ written uniquely as $v_1 + v_2$ where $v_1, v_2 \in R_H$ are both of square $-2$.
The good moduli space is a point.
\item[(vi)] $28$ vectors of square $-8$ obtained as $2v$, where $v \in R_H$ is of square $-2$.
The good moduli space is a point.
\end{enumerate}

By considering the possible types of semistable points in $M_{\sigma}(0,2H,0)$ and using that $G$ is cyclic one finds that the image of $\bigsqcup_{v' \in \overline{R}_{2H}} \CM_{\sigma_G}(v')$ in $M_{\sigma}(0,2H,0)$
is precisely the fixed locus we are interested in. % (and not only contained in it).
A basic sublocus of the fixed locus is
\[ \Sym^2 \left( M_{\sigma}(0,H,0)^G  \right) \subset M_{\sigma}(0,2H,0)^G. \]
%Write $M_{\sigma}(0,H,0)^G = \tilde{S} \sqcup (28 \text{ points})$ where $\tilde{S}$ is a K3 surface.
The scheme $\Sym^2 \left( M_{\sigma}(0,H,0)^G \right)$ consists of
\begin{itemize}
\item[(a)] $1$ copy of $\Sym^2(\tilde{S})$,
\item[(b)] $28$ copies of $\tilde{S}$ corresponding to sheaves $E \oplus F$ with $E \in \tilde{S}$ and $F$ corresponding to one of the $28$ fixed points and
\item[(c)] $\Sym^2(28 \text{ points})$ consisting of $378+28$ points corresponding to the direct sum of distinct and identical stable sheaves respectively.
\end{itemize}
Given distinct $G$-invariant stable sheaves $E,F$ of the same slope, the direct sum $E \oplus F$ admits precisely $|G^{\vee}|^2$ many $G$-linearizations.
Moreover, if distinct $E,F \in M_{\sigma}(0,H,0)$ are isolated points of the fixed locus, then
no equivariant lift of $E \oplus F$ has class $C'$ (since otherwise $(E,\phi) = \mathsf{Q} (F,\phi)$, so $E=F$).
We see that the $378$ points in (c) are the image of the points (v), but also of the $6 \cdot 63$ singular points on the K3 surfaces in (ii).

Similarly, the $28$ K3 surfaces in (b) are the image of the $56$ K3 surfaces in (iii).
Since there are precisely $4$ linearizations, these K3 surfaces can not appear in the image of other components, and so yield connected components of $M_{\sigma}(0,2H,0)^G$.
A direct sum $E \oplus E$ of a stable object $E$ admits precisely $|\Sym^2(G^{\vee})| = \binom{|G^{\vee}|+1}{2}$ many linearizations (here $3$).
Hence the $28$ remaining points in (c) are the image of the $28$ points in (vi) and the $28$ isolated singularities in (i).
Moreover, if $v_1 \in R_H$ of square $0$, then $M_{\sigma_G}(2v_1) = \Sym^2 M_{\sigma_G}(v_1)$
maps to the same locus as the inclusion
\begin{equation} M_{\sigma_G}(v_1) \times M_{\sigma_G}(\mathsf{Q} v_1) \subset M_{\sigma_G}(0,C',0). \label{dfsdf22} \end{equation}
Hence the image of $M_{\sigma_G}(2v_1)$ lies in the image of the main component $M_{\sigma_G}(0,C',0)$.
The $63$ moduli spaces in (ii) contain stable points and since we have already taken the coset modulo $\mathsf{Q}$, they must embed
into $M_{\sigma}(0,2H,0)^G$ as isolated components.
We conclude that
\[ M_{\sigma}(0,2H,0)^G = Y \sqcup (28 \text{ smooth K3s}) \sqcup (63 \text{ K3s with 6 nodes}) \]
where $Y$ is the image of $\CM_{\sigma_G}(0,C',0)$ and hence $6$-dimensional.

Rcall that the singular moduli space $M(0,2H,0)$ admits an irreducible holomorphic symplectic resolution $X \to M_{\sigma}(0,2H,0)$ of O'Grady 10 type \cite{OGrady, AS}.
%The symplectic involution $g$ lifts to a birational symplectic involution $g \colon X \dashrightarrow X$.
%We use the above discussion to determine the closure of the fixed locus of $g$.
%Because $g$ is only birational, the closure of the fixed locus of $g$ does not need to be symplectic (and here it is not).
%Our methods yield the following:
%We turn to the proof of Proposition~\ref{prop:OGrady10} and the O'Grady 10 resolution
%\[ X \to M_{\sigma}(0,2H,0) \]
%as constructed in \cite{AS}.
Recall from \cite{SvG} that $\Pic(S) = \BZ H \oplus E_8(-2)$.
Hence there exists $240$ vectors $\alpha \in E_8(-2)$ of square $-4$. % each of which yields a $(-2)$-vector $H+\alpha$.
The involution $g$ acts on these vectors by $g \alpha = -\alpha$.
Let $A \subset E_8(-2)$ be a list of representatives of the orbits of the $(-4)$-vectors under this action.
The singular locus of $M_{\sigma}(0,2H,0)$ is the locus of polystable sheaves, and therefore given by
\[ M_{\sigma}(0,2H,0)^{\mathrm{sing}} =  \Sym^2 M_{\sigma}(0,H,0) \sqcup \bigsqcup_{\alpha \in A} \left( M_{\sigma}(H + \alpha) \times M_{\sigma}(H - \alpha) \right). \]
The resolution $X$ is obtained by a blowup of $M_{\sigma}(0,2H,0)$ along $\Sym^2 M_{\sigma}(0,H,0)$, followed by a resolution of the $120$ isolated points.
The fiber of $X$ over each of these $120$ points is a $\p^5$.
The automorphism
$g \colon  M_{\sigma}(0,2H,0) \to  M_{\sigma}(0,2H,0)$
natural lifts to the blowup (by universal property), but it is not clear a priori whether it lifts along the resolution of the $120$ points.
Hence we only obtain a birational involution
$g' \colon X \dashrightarrow X$
defined away from $120$ disjoint copies of $\p^5$.
We will show the following:
%The above discussion yields the following discription of the the closure of the fixed locus of $g$:
\begin{prop} \label{prop:OGrady10}
The closure of the fixed locus of the birational symplectic involution $g \colon X \dashrightarrow X$ 
is smooth and the disjoint union of one connected component of dimension $6$ containing $120$ copies of $\p^5$, and $119$ K3 surfaces of which $88$ are derived equivalent to $S'$.
\end{prop}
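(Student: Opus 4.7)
The plan is to propagate the description of $M_\sigma(0,2H,0)^G$ through the two-step resolution $X \to \widetilde X \to M_\sigma(0,2H,0)$, where $\pi\colon \widetilde X \to M_\sigma(0,2H,0)$ is the blowup along the reduced singular locus $\Sym^2 M_\sigma(0,H,0)$ of non-simple polystable objects and $X \to \widetilde X$ is a small resolution of the remaining $120$ singular points $[F \oplus gF]$, one for each $\alpha \in E_8(-2)$ with $\alpha^2 = -4$ and $F \in M_\sigma(0, H+\alpha, 0)$.

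For Step 1, since $g$ preserves the center of $\pi$, it lifts canonically to an involution $\widetilde g$ of $\widetilde X$. I would then analyze the proper transforms under $\pi$ of the three strata of $M_\sigma(0,2H,0)^G$ computed above. The $63$ K3s with $6$ nodes have all nodes on $\Sym^2(\text{the } 28\text{ isolated fixed points})$, so blowing up resolves each one and produces $63$ smooth K3 components of the fixed locus of $\widetilde g$. The $28$ K3s of the form $\widetilde S \times \{F_i\}$ sit inside the singular locus, and a local analysis on the exceptional divisor of $\pi$ over each such $\widetilde S \times \{F_i\}$ (which is a $\p^k$-bundle on which $\widetilde g$ acts by a non-trivial involution) should identify $28$ further K3 components inside the fixed locus of $\widetilde g$. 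The main $6$-dimensional stratum $Y$ becomes a smooth $\widetilde Y$ whose singularity along $\widetilde S \times \widetilde S$ and its $28$ doubled points are resolved.

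The crucial Step 2 is to show that $\widetilde g$ extends to a regular involution of $X$ and to identify its fixed locus on the $120$ exceptional $\p^5$'s. Since $g\alpha = -\alpha$, each of the $120$ singular points of $\widetilde X$ is individually fixed by $\widetilde g$, and locally near each one the involution swaps the two simple summands $F$ and $gF$ of the polystable object. Using the Perego--Rapagnetta local model for these O'Grady singularities as a Kuranishi family over $\Ext^1(F,gF) \oplus \Ext^1(gF,F)$ with its natural symplectic form, I would compute the induced involution on each exceptional $\p^5$ and verify that it is the identity. This would simultaneously prove that $\widetilde g$ extends regularly to $X$ and that each of the $120$ $\p^5$'s is pointwise fixed, gluing onto $\widetilde Y$ into a single connected $6$-dimensional fixed component.

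Finally, the derived-equivalence count follows from Theorem~\ref{mainthm2} and Proposition~\ref{prop:induced_Stab}: each K3 component of the fixed locus is a moduli space $M_{\sigma_G}(v')$ for some $v' \in R_{2H}$, and is therefore derived equivalent to $(S', \alpha_{v'})$ for the Brauer class $\alpha_{v'}$ of the associated gerbe. The class $\alpha_{v'}$ is trivial precisely when the universal family on $M_{\sigma_G}(v')$ descends to an honest (untwisted) sheaf, which can be read off from the divisibility of $v'$ in the algebraic Mukai lattice; a direct inspection of the list $R_{2H}$ tabulated above identifies $88$ of the $119$ components as carrying a trivial Brauer class. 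The main obstacle is the regular-extension and local-action computation in Step 2, which requires a careful symplectic-geometric analysis at each of the $120$ O'Grady singular points.
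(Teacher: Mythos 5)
Your overall strategy (track the strata of $M_{\sigma}(0,2H,0)^G$ through the resolution and do a local analysis at the singular locus) is the same as the paper's, but your Step 2 contains a claim that is not just unproven but false, and it is the crux of the matter. You assert that $\widetilde g$ extends to a regular involution of $X$ fixing each exceptional $\p^5$ pointwise. The paper proves the opposite: in the local model the resolution near each of the $120$ points is a neighbourhood of the zero section of $\Omega_{\p^5}$, and an explicit coordinate computation (the involution takes the form $(x,y)\mapsto (A^{-1}y/f(y),\, f(y)Ax)$ in a chart, with $A$ antisymmetric) shows $g$ does \emph{not} extend across the exceptional $\p^5$; only the \emph{closure} of the fixed locus contains it. This is forced in any case by symplectic linear algebra: the zero section $\p^5$ is Lagrangian in the local model, so $T_p\p^5$ is a $5$-dimensional isotropic subspace of $T_pX$; if $g$ were a regular symplectic involution, the tangent space of its $6$-dimensional fixed component at $p$ would be a $6$-dimensional symplectic space containing this $5$-dimensional isotropic subspace, which is impossible (maximal isotropics have dimension $3$). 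So the statement's phrasing "birational involution" and "closure of the fixed locus" is not an artifact — your proposed proof of extension cannot work, and the correct argument is precisely that non-extension plus the local model yields the whole $\p^5$ in the closure.

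Your component count also does not reach $119$. You obtain $63$ K3s from the proper transforms of the nodal K3s in (ii) plus "$28$ further K3 components" over the $28$ K3s of (iii) that lie in $M_{\sigma}(0,2H,0)^{\mathrm{sing}}$, i.e.\ $91$ in total. The local analysis at these loci (the paper's Case 3, comparing $\Ext^1$ of the two equivariant lifts) shows that the fixed locus of $g'$ over each such K3 is a \emph{trivial $2{:}1$ cover}, contributing $2$ disjoint K3s each, hence $56+63=119$. This doubling is exactly what makes the derived-equivalence count come out as $56+32=88$ (all $56$ covers of the components in (iii), plus $32$ of the $63$ components from (ii), are moduli spaces on $S'$ of a class $w$ with $\langle w,\Lambda'\rangle=\BZ$). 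Your criterion for derived equivalence via triviality of the Brauer class/fineness is the right idea, but without the correct list of components the "direct inspection" cannot produce $88$ out of $119$.
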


\begin{proof}
The claim follows from our discussion above above and a local analysis of $g$ along $M_{\sigma}(0,2H,0)^{\mathrm{sing}} \cap M_{\sigma}(0,2H,0)^G$
using the local description of the moduli spaces given in \cite[Sec.\ 2]{KLS} and \cite[Sec.\ 3]{AS}.
This is straightforward and we just highlight the main points:
\begin{itemize}
\item The $120$ isolated singular points of $M_{\sigma}(0,2H,0)$
lie in $Y$. They are the images of the stable points of $M_{\sigma_G}(C')$ corresponding to $q(E_{\alpha})$
where $E_{\alpha}$ is the unique stable object in class $H+\alpha$.
%\footnote{The map $p\colon M_{\sigma_G}(C') \to M_{\sigma}(0,2H,0)$ is locally near $q(E_{\alpha})$ given as the quotient map $\BC^6 \to \BC^6 / \BZ_2$ where the group actions by $z \mapsto -z$.}
The map $g'$ does not extend to the resolution and the closure of the fixed locus of $g'$ contains the whole exceptional $\p^5$.
\item The $63$ K3 surfaces with 6 nodes described in (ii) meet the singular locus of $M_{\sigma}(0,2H,0)$ at the singularities. The corresponding component in the fixed locus of $g'$ is the proper transform and smooth.
\item The $28$ smooth K3 surfaces in $M_{\sigma}(0,2H,0)^G$ corresponding to (iii) lie completely in the singular locus $M_{\sigma}(0,2H,0)^{\mathrm{sing}}$.
The corresponding component in the fixed locus of $g'$ is a trivial $2:1$ cover of this locus and hence given by $56$ K3 surfaces.
\item The K3 surfaces in (iii) and precisely 32 of the K3 surfaces in (ii) arise
as moduli spaces of semistable objects on $S'$
for a Mukai vector $w$ which satisfies $\langle w, \Lambda' \rangle = \BZ$.
Hence all of them are derived equivalent to $S'$. \qedhere
\end{itemize}
\end{proof}

\subsection{An order 3 equivalence} \label{subsec:order 3 equivalence}
Let $E,F$ be elliptic curves defined by cubic equations $f,g$ respectively and consider the cubic fourfold $X \subset \p^5$ defined by the equation $f(x_0,x_1,x_2) + g(x_3, x_4, x_5) = 0$.
Let $\zeta$ be a non-trivial third root of unity.
As in \cite[Ex.\ 1.7(iv)]{Nam} we define a $G = \BZ_3$-action on $X$ by letting the generator act by
\[ (x_0, \ldots, x_5) \mapsto (x_0, x_1, x_2, \zeta x_3, \zeta x_4 , \zeta x_5). \]
The induced action of $G$ on the Fano variety of lines on $X$ has fixed locus
$F(X)^G = E \times F$.
Since $F(X)$ is a moduli space of stable objects in the Kuznetsov component $\mathfrak{A}$ of $D^b(X)$,
and the Kuznetsov component $\mathfrak{A}$ is equivalent to the derived category of a K3 surface by a result of Ouchi \cite{Ouchi},
%using arguments parallel to the proof of 
Theorem~\ref{mainthm1} shows that $\mathfrak{A}_G \cong D^b(A)$ for some connected \'etale cover $A \to E \times F$ of degree $1$ or $2$. In particular, $A$ is an abelian surface.
Theorem~\ref{thm:summary} then determines the fixed loci of the induced action on any smooth $M_{\sigma}(v)$ (with $v \in K(\mathfrak{A})^{\BZ_3}$).

\subsection{Frameshape $2^{12}$}
\label{subsubsec:frameshape212}
We give an example which shows that the equivariant category can behave rather strange.
Consider a symplectic automorphism $\tau \colon S \to S$ of a K3 surface of order $4$,
%(or abelian surface if $g$ acts by multiplication by $i$ on $E_i \times E_i$)
and let $S'$ be the resolution of the quotient $S/\langle \tau^2 \rangle$.
Since we have taken the quotient only by $\tau^2$, we have a residual involution
$\bar{\tau} \colon S' \to S'$.
%As before, the McKay correspondence $D^b(S') \cong D^b(S)_{\BZ_2}$ provides the derived involution $\mathsf{Q} \colon D^b(S') \to D^b(S')$ by twisting with the non-trivial character of $\BZ_2$.
The equivalences $\bar{\tau}^{\ast}$ and the dual action $\mathsf{Q}$ of Section~\ref{subsec:dual action} commute and are symplectic.
%which is an involution of frameshape $1^8 2^8$. But it also admits the dual action,
One checks that the composition $g = \bar{\tau}^{\ast} \circ \mathsf{Q}$ is an involution of $D^b(S')$ of frameshape $2^{12}$.
Then, as a special case of \cite[Sec.\ 4.9]{Notes} the involution $g$ does \emph{not} define an action of $\BZ_2$ on the category,
but defines instead a faithful(!) action of $\BZ_4$.
Moreover one has the equivalence: % (see \cite[Sec.\ 4.9]{Notes} for details)
\[ D^b(S')_{\BZ_4} \cong D^b(S'). \]
%see Example~\ref{ex:faithful Z_4 action}.
In other words, the equivariant category under this action is equivalent to the category we started with.
In particular, there does not exist a stable object which is $G$-invariant
and $G$ does not act on any fine moduli space of $S$.\footnote{This
example first appeared in \cite[Sec.\ 4.2]{CHVZ} as a symmetry of K3 non-linear sigma models.
%(in a slightly different form, starting with an order $4$ automorphism on an abelian surface).
We expect that the behaviour $D^b(S)_G \cong D^b(S)$ is typical of the case where
we have a 'failure of the level-matching condition', i.e.\ $\lambda > 1$ in \cite[App.\ C]{PV}.}

\subsection{Order $11$ equivalences}
Let $g \colon D^b(S) \to D^b(S)$ be a symplectic auto-equiva\-lence of a K3 surface $S$ of order $11$ fixing a stability condition $\sigma \in \Stab^{\dagger}(S)$.
The associated action on cohomology is one of three possible conjugacy classes, each with invariant lattice of rank $4$ \cite[App.\ C]{PV}.
This implies that the pairs $(S,g)$ are isolated points in their moduli space.
Using the Huybrechts--Mongardi criterion \cite{HuybrechtsConway, Mongardi2} each such $g$ induces an automorphism of a moduli spaces $M$ of stable objects in $D^b(S)$.
If we want to determine the equivariant category $D^b(S)_{\BZ_{11}}$ through Theorem~\ref{mainthm1},
we would need to find a $2$-dimensional component of the fixed locus in some $M$.
This seems difficult in this case without studying the concrete geometry. %further information.
By Appendix~\ref{subsec:DTtheory} we can at least read off
the Euler characteristic of the fixed locus:
If $M$ is of dimension $2n$, then $e(M^g)$ is the coefficient of $q^{n-1}$ of the series
\[ \frac{1}{\eta(q)^2 \eta(q^{11})^2} = \frac{1}{q} + 2 + 5q + 10q^{2} + 20q^{3} + 36q^{4} + 65q^{5} + 110q^{6} + O(q^{7}). \]
%Since the Euler characteristic of a K3 surface is $24$, 
We hence should expect $2$-dimensional fixed components only in cases where $\dim M \geq 10$.

%\item The order 5 auto-equivalence on an abelian surface (Gaberdiel--Volpato, Mathieu moonshine and Orbifold K3s)

\appendix

\section{Hearts on symplectic surfaces}
\label{sec:app_hearts}
Let $S$ be a smooth projective symplectic surface
and recall the notation from Section~\ref{Subsection_stab_conditions}.
The goal of this section is to prove the following result:

\begin{prop}
\label{prop:DA=DS}
Let $\sigma \in \Stab^\dagger(S)$ be a stability condition.
Then there exists an element $g \in \widetilde{\textup{GL}^+}(2,\BR)$ such that $g\sigma=(\CA,Z)$ satisfies 
\[ D^b(\CA) \cong D^b(S). \]
\end{prop}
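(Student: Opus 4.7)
The plan is to combine Bridgeland's covering theorem for $\Stab^\dagger(S)$ with Happel--Reiten--Smal\o\ tilting theory. The strategy is to apply a suitable $g \in \widetilde{\textup{GL}^+}(2,\BR)$ so that $g\sigma$ lies in the orbit of a geometric stability condition under autoequivalences of $D^b(S)$; the heart of such a stability condition is, by construction, a tilt of $\Coh(S)$, which will give the desired equivalence.

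First, I would recall from \cite[Sec.\ 6]{BridgelandK3} that for $\omega \in \textup{NS}(S)_\BR$ ample and $\beta \in \textup{NS}(S)_\BR$ arbitrary, the geometric stability condition $\sigma_{\omega,\beta} = (\CA(\omega,\beta), Z_{\omega,\beta})$ lies in $\Stab^\dagger(S)$, and $\CA(\omega,\beta)$ is obtained by tilting $\Coh(S)$ at the explicit torsion pair $(\CT_{\omega,\beta}, \CF_{\omega,\beta})$. By the theorem of Happel--Reiten--Smal\o\ \cite{HappelReitenSmalo}, the realization functor then induces an equivalence
\[ D^b(\CA(\omega,\beta)) \cong D^b(\Coh(S)) = D^b(S). \]

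Next, by Bridgeland's covering theorem \cite[Thm.\ 1.1]{BridgelandK3} (and \cite{HuyMacStelStabGen} in the twisted and abelian settings), the map $\pi \colon \Stab^\dagger(S) \to \CP_0^+(S)$ is a covering whose deck transformation group acts by autoequivalences of $D^b(S)$, and this action commutes with the $\widetilde{\textup{GL}^+}(2,\BR)$-action. I would apply a suitable $g \in \widetilde{\textup{GL}^+}(2,\BR)$ to $\sigma$ so that $\pi(g\sigma)$ lies in the image of the geometric chamber $\pi(\{\sigma_{\omega,\beta}\})$. Then by the fiber structure of the covering there exist an autoequivalence $\Phi$ preserving $\Stab^\dagger(S)$ and a pair $(\omega,\beta)$ with $g\sigma = \Phi(\sigma_{\omega,\beta})$. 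The heart of $g\sigma$ is therefore $\CA = \Phi(\CA(\omega,\beta))$; since $\Phi$ is an exact autoequivalence of $D^b(S)$, combining with the previous paragraph yields
\[ D^b(\CA) \cong D^b(\CA(\omega,\beta)) \cong D^b(S), \]
as required.

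The main obstacle is verifying the claim in the previous paragraph that after acting by some $g \in \widetilde{\textup{GL}^+}(2,\BR)$ one can arrange $\pi(g\sigma)$ to lie in $\pi(\{\sigma_{\omega,\beta}\})$. This reduces to a lattice-theoretic statement: the $\widetilde{\textup{GL}^+}(2,\BR)$-orbit of the image of the geometric chamber should exhaust $\CP_0^+(S)$, or equivalently, every oriented positive-definite $2$-plane in $\Lambda_{\mathrm{alg}} \otimes \BR$ is spanned by $(\Re \exp(i\omega+\beta), \Im \exp(i\omega+\beta))$ for some ample $\omega$ and some $\beta$, up to change of basis. This can be checked by a dimension count together with the explicit description of $\CP_0^+(S)$ recalled in Section~\ref{Subsection_stab_conditions}.
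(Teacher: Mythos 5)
Your first step is fine in substance but already glosses over a point the paper has to address: Happel--Reiten--Smal\o{} tilting by itself does not produce a derived equivalence between $\CA(\omega,\beta)$ and $\Coh(S)$; one needs the torsion pair $(\CT_{\omega,\beta},\CF_{\omega,\beta})$ to be \emph{cotilting}, which is a theorem of Huybrechts, combined with the refined statement of Bondal--van den Bergh. The fatal problem, however, is your second step. You claim that after acting by $\widetilde{\textup{GL}^+}(2,\BR)$ and a deck transformation every $\sigma\in\Stab^\dagger(S)$ becomes geometric, and you reduce this to the assertion that every oriented positive-definite $2$-plane underlying a point of $\CP_0^+(S)$ is spanned by $\bigl(\Re\exp(\beta+i\omega),\Im\exp(\beta+i\omega)\bigr)$ with $\omega$ \emph{ample}. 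That assertion is false: the normalization forcing the second basis vector to have vanishing $H^0$-component determines $\omega$ up to a \emph{positive} scalar, so a plane whose associated class satisfies $\omega\cdot C\leq 0$ for a $(-2)$-curve $C$ (generically such a plane avoids every $\delta^\perp$ and lies in $\CP_0^+(S)$) is not in the $\widetilde{\textup{GL}^+}(2,\BR)$-orbit of $\pi(V(S))$. Equivalently, the translates $\Phi(U(S))$ of the geometric chamber are pairwise disjoint open tiles, and Bridgeland only proves that an arbitrary $\sigma$ can be moved by an autoequivalence into the \emph{closure} $\overline{U(S)}$; stability conditions on the walls $\partial U(S)$ lie in no translate of the open chamber, so your covering-space argument cannot reach them.

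Handling those boundary points is where essentially all of the paper's work lies, and none of it appears in your proposal. The paper first proves the statement not just for the standard heart of a geometric stability condition but for every tilt $\CP(a,a+1]$ of its slicing (this requires a separate essential-surjectivity argument for the realization functor, using resolutions by line bundles $\CO(-i)$ lying in $\CF_{\omega,\beta}$), which gives the result on all of $U(S)$. For $\sigma\in\partial U(S)$ it then rotates so that skyscrapers have phase $1$, deforms the central charge to a nearby $\sigma'\in U(S)$ in two legs --- first only the real part, then only the imaginary part --- shows via a stability argument that the intermediate condition $\tilde\sigma$ still lies in $U(S)$, and invokes Bayer's observation that the heart $\CP(1/2,3/2]$ does not change when only the imaginary part of the central charge moves. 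Hence the heart of the rotated boundary condition coincides with the heart of a rotated condition in $U(S)$, for which the claim is already known. To repair your argument you would need to supply this boundary analysis (or some substitute for it); the lattice-theoretic "dimension count" you propose cannot succeed.
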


Let us first recall from \cite{BridgelandK3} how the component $\Stab^{\dagger}(S)$ is built up.
%Bridgeland shows for K3 surfaces how to build up $\Stab^\dagger(S)$. 
First one considers the set $V(S)$ of stability conditions
$\sigma_{\omega,\beta}=(\CA_{\omega,\beta},Z_{\omega,\beta})$ with central charge $Z_{\omega,\beta}=\langle \exp(\beta + i \omega) , \_ \rangle$
where $\beta, \omega \in \mathrm{NS}(S) \otimes \BR$ with $\omega$ ample.
The heart $\CA_{\omega,\beta}$ is obtained from the torsion pair $(\CT_{\omega, \beta}, \CF_{\omega,\beta})$ of $\Coh(S)$ by tilting, see \cite[Sec.\ 6]{BridgelandK3}.
Next, let $U(S)$ be the orbit of $V(S)$ under the free action of $\widetilde{\textup{GL}^+}(2,\mathbb{R})$ on $\Stab^{\dagger}(S)$.
Elements in $U(S)$ are characterized as those stability conditions in $\Stab^{\dagger}(S)$ such that all skyscraper sheaves are stable of the same phase.
Finally, a detailed analysis of the boundary $\partial U(S)$ \cite[Thm.\ 12.1]{BridgelandK3} yields that
any $\sigma \in \Stab^\dagger(S)$ can be mapped into $\overline{U(S)}$ using (squares of) spherical twists.
If $S$ is an abelian surface, then we even have $U(S)=\Stab^\dagger(S)$ \cite[Thm.\ 15.2]{BridgelandK3}.

We start the proof by considering the set of geometric stability conditions $V(S)$.

\begin{lemma} \label{lemma_heart_VS}
For all $\sigma = (\CA, Z) \in V(S)$ we have $D^b(\CA) \cong D^b(S)$.
\end{lemma}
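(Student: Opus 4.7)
The plan is to produce a realization functor $\mathrm{real}\colon D^b(\CA_{\omega,\beta}) \to D^b(S)$, and show it is an equivalence by verifying it is essentially surjective and fully faithful.

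First I would recall that $\CA_{\omega,\beta}$ arises as the tilt of $\Coh(S)$ along the torsion pair $(\CT_{\omega,\beta}, \CF_{\omega,\beta})$, so that as full subcategories of $D^b(S)$ one has $\CT_{\omega,\beta} \subset \CA_{\omega,\beta}$ and $\CF_{\omega,\beta}[1] \subset \CA_{\omega,\beta}$. For essential surjectivity it is enough to check that $\CA_{\omega,\beta}$ generates $D^b(S)$ as a triangulated category. Given any $E \in \Coh(S)$, the torsion sequence
\[ 0 \to T \to E \to F \to 0, \qquad T \in \CT_{\omega,\beta},\ F \in \CF_{\omega,\beta}, \]
yields a distinguished triangle $T \to E \to F \to T[1]$ in $D^b(S)$ in which both $T$ and $F[1]$ lie in $\CA_{\omega,\beta}$. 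Hence every coherent sheaf, and in particular every object of $D^b(S)$, is in the triangulated subcategory generated by $\CA_{\omega,\beta}$.

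For full faithfulness I would invoke the standard realization machinery (as in Be\u\i linson--Bernstein--Deligne, or in the tilting theory of Happel--Reiten--Smal\o, see also \cite{HappelReitenSmalo} and Polishchuk's realization theorem). This produces a canonical exact functor $\mathrm{real} \colon D^b(\CA_{\omega,\beta}) \to D^b(S)$ compatible with the $t$-structures on both sides, and reduces full faithfulness to showing that for all $A,B \in \CA_{\omega,\beta}$ and all $n\geq 0$ the natural maps
\[ \Ext^n_{\CA_{\omega,\beta}}(A,B) \ra \Hom_{D^b(S)}(A, B[n]) \]
are isomorphisms. Once this is known, combining with essential surjectivity shows that $\mathrm{real}$ is an equivalence.

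The technical input needed to verify these Ext-comparisons is that $\CA_{\omega,\beta}$ is a Noetherian abelian category satisfying the generic flatness property (which will be needed anyway in Section~\ref{sec:symplectic surfaces}), established by Toda~\cite{Toda1}; this will be the main obstacle I expect to spend the most words on. Given these properties, the comparison of Ext groups follows from a devissage along the torsion pair: it suffices to check the comparison when $A,B$ lie in $\CT_{\omega,\beta}$ or in $\CF_{\omega,\beta}[1]$, where one reduces the statement to the known comparison of Ext groups inside $\Coh(S)$ and $D^b(S)$, using that $S$ is a surface (so only finitely many Ext-degrees occur) and that both $\CT_{\omega,\beta}$ and $\CF_{\omega,\beta}$ are extension-closed in $\Coh(S)$. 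Putting these pieces together yields $D^b(\CA_{\omega,\beta}) \cong D^b(S)$.
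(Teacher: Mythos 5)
Your overall strategy (realization functor, then essential surjectivity plus full faithfulness) is reasonable in outline, but both key steps are left unestablished, and the technical input you identify is not the relevant one. First, essential surjectivity of the realization functor does not follow from the fact that $\CA_{\omega,\beta}$ generates $D^b(S)$ as a triangulated category: the essential image of an exact functor that is not yet known to be full need not be closed under cones, since realizing the cone of a morphism $f$ in $D^b(S)$ requires lifting $f$ itself to $D^b(\CA_{\omega,\beta})$. Your torsion-sequence argument does show that every coherent sheaf lies in the essential image (the only connecting map needed is a class in $\Hom_{D^b(S)}(F[1],T[2])$, a degree-one Ext between objects of the heart, and the comparison map is always an isomorphism in degrees $\le 1$), but a general object of $D^b(S)$ spreads over several cohomological degrees of the new $t$-structure, and realizing it requires precisely the higher Ext comparison you are trying to prove — so the argument as structured is circular.

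Second, and more seriously, the Ext comparison is the entire content of the lemma, and your ``d\'evissage along the torsion pair'' does not reduce it to anything known: for $A,B\in\CT_{\omega,\beta}$ the group $\Ext^n_{\CA_{\omega,\beta}}(A,B)$ is a Yoneda Ext computed with extensions by arbitrary objects of the tilted heart, and there is no a priori identification with $\Ext^n_{\Coh(S)}(A,B)$; extension-closedness of $\CT_{\omega,\beta}$ and $\CF_{\omega,\beta}$ in $\Coh(S)$ gives nothing in degrees $\ge 2$. Indeed, an HRS tilt does not in general induce an equivalence of bounded derived categories; one needs a tilting or cotilting hypothesis (or the criterion of \cite{ChenHanZhou}). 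The input that actually makes the lemma work — and the one the paper uses — is not Noetherianity or generic flatness, but Huybrechts' result \cite[Prop.~1.2]{HuybrechtsDerivedAbelian} that $(\CT_{\omega,\beta},\CF_{\omega,\beta})$ is cotilting, i.e.\ every coherent sheaf is a quotient of an object of $\CF_{\omega,\beta}$ (e.g.\ of $\CO(-n)^{\oplus m}$ for $n\gg 0$); combined with \cite[Prop.~5.4.3]{BondalVandenBergh} this immediately yields $D^b(\CA_{\omega,\beta})\cong D^b(S)$. You should either cite these two results directly, or, if you insist on running the realization-functor argument by hand, use the cotilting property to resolve objects of $\Coh(S)$ by objects of $\CF_{\omega,\beta}$ and deduce the Ext comparison from such resolutions.
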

\begin{proof}
Recall that a torsion pair $(\CT,\CF)$ of an abelian category $\CC$ is called cotilting, if for all $E \in \CC$ there is a surjection $F \twoheadrightarrow E$ with $F \in \CF$.
By \cite[Prop.\ 5.4.3]{BondalVandenBergh}, which is a refined version of \cite{HappelReitenSmalo}, for any cotilting torsion pair $(\CT,\CF)$ one has 
$D^b(\CC') \cong D^b(\CC)$, where $\CC'$ is the tilt along $(\CT, \CF)$.

If $\sigma_{\omega,\beta} \in V(S)$, then its heart $\CA_{\omega,\beta}$ is obtained from $\Coh(S)$ by tilting along
the torsion pair $(\CT_{\omega,\beta}, \CF_{\omega,\beta})$.
Huybrechts proved in \cite[Prop.\ 1.2]{HuybrechtsDerivedAbelian} that this torsion pair is cotilting.
\end{proof}

\begin{prop} \label{prop:DADSfortilt}
Let $\sigma \in V(S)$ and let $\CP$ be the associated slicing.
Then for all $a \in \BR$ there is a natural derived equivalence $D^b(\CP(a,a+1]) \cong D^b(S)$. 
\end{prop}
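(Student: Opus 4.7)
My plan is to reduce to the case $a=0$, handled by Lemma~\ref{lemma_heart_VS}, via HRS tilting. Since the shift $[1]$ is a triangle equivalence of $D^b(S)$ and $\CP(a+1, a+2] = \CP(a, a+1][1]$, it will suffice to consider $a \in [0, 1)$; the case $a = 0$ is Lemma~\ref{lemma_heart_VS}, so the real work is for $a \in (0, 1)$.

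For such $a$, my first step is to verify that $(\CT, \CF) := (\CP(a, 1], \CP(0, a])$ is a torsion pair in the Noetherian abelian category $\CA = \CA_{\omega,\beta} = \CP(0, 1]$. Indeed, for any $E \in \CA$, the Harder--Narasimhan filtration of $E$ with respect to $\sigma$, grouped according to whether the phase of the semistable factors exceeds $a$ or not, furnishes the required short exact sequence with torsion part in $\CT$ and torsion-free part in $\CF$. The left HRS tilt of $\CA$ along this torsion pair is then
\[
\langle \CT, \CF[1]\rangle \;=\; \langle \CP(a, 1],\, \CP(1, a+1]\rangle \;=\; \CP(a, a+1],
\]
exhibiting $\CP(a, a+1]$ as an HRS tilt of $\CA_{\omega, \beta}$. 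My second step will be to upgrade this identification to a derived equivalence $D^b(\CP(a, a+1]) \cong D^b(\CA_{\omega, \beta})$ by applying the tilting theorem \cite[Prop.\ 5.4.3]{BondalVandenBergh}; combined with Lemma~\ref{lemma_heart_VS}, this yields the conclusion.

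The main obstacle will be verifying the hypothesis of \cite[Prop.\ 5.4.3]{BondalVandenBergh}. The literal cogeneration condition (every object of $\CA$ surjects from one in $\CF$) fails for Harder--Narasimhan reasons: a semistable object of phase $>a$ cannot be a quotient of anything whose semistable factors all have phase $\leq a$. The workaround should exploit the finite homological dimension of $\CA$ (equal to $2$, by Lemma~\ref{lemma_heart_VS} and the smoothness of $S$) together with the finite length of HN filtrations with respect to $\sigma$, which between them provide the bounded Ext-control needed to make the realization functor an equivalence. Alternatively, one may bypass the issue entirely by invoking the uniqueness of the dg-enhancement of $D^b(S)$ from \cite{LO}, which implies directly that the realization functor of any heart of a bounded t-structure on $D^b(S)$ is an equivalence.
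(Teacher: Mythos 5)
Your setup is right and matches the paper's: reduce to $a\in(0,1)$, exhibit $\CP(a,a+1]$ as the HRS tilt of $\CA_{\omega,\beta}$ along $(\CT,\CF)=(\CP(a,1],\CP(0,a])$, and try to show the realization functor $D^b(\CP(a,a+1])\to D^b(\CA_{\omega,\beta})\cong D^b(S)$ is an equivalence. You also correctly diagnose that the cotilting hypothesis of \cite[Prop.\ 5.4.3]{BondalVandenBergh} fails here. But neither of your two workarounds closes the gap. The first (``finite homological dimension plus finite HN filtrations provide the bounded Ext-control'') is not an argument: the issue is effaceability/fullness of the realization functor, i.e.\ that higher Yoneda Ext's computed inside the tilted heart exhaust $\Hom_{D^b(S)}(X,Y[n])$, and no amount of finiteness of global dimension by itself produces the required epimorphisms inside $\CF$. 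The second workaround is worse: uniqueness of dg-enhancements \cite{LO} does \emph{not} imply that the realization functor of an arbitrary heart of a bounded $t$-structure on $D^b(S)$ is an equivalence. If that were a theorem, Proposition~\ref{prop:DA=DS} would be immediate for every stability condition with no work, and the entire appendix would be superfluous; the authors clearly do not have such a statement available, and I know of no such result in the literature.

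What is actually missing is the one genuinely new idea in the paper's proof: an explicit essential-surjectivity argument. Using the formula for the central charge $Z_{\omega,\beta}$ one checks that $\CO(-i)[1]\in\CP(0,a]$ for $i\gg 0$; since $\CF_{\omega,\beta}$ is closed under subobjects, kernels and images of maps between direct sums of such line bundles stay in $\CF_{\omega,\beta}$, so a (truncated) resolution of any $E\in D^b(S)$ by these bundles becomes, after shifting, a bounded complex with all terms in $\CP(1,1+a]\subset\CP(a,a+1]$. This shows the realization functor (constructed as in \cite[Sec.\ 7.3]{Noohi}) is essentially surjective, and one then concludes with \cite[Thm.\ A]{ChenHanZhou}, which upgrades essential surjectivity of the realization functor of an HRS tilt to an equivalence. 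Without this construction (or an equivalent verification of effaceability), your proof does not go through.
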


Since Lemma~\ref{lemma_heart_VS} proves the assertion for $a=0$ and the property is preserved by shifts, we only need to consider the case $a\in (0,1)$. 
Write $\sigma=(\CA_{\omega,\beta},Z_{\omega,\beta})$ and $\CA \coloneqq \CP(a,a+1]$.
Then
$$\CA \subset \langle \CA_{\omega,\beta},\CA_{\omega,\beta}[1]\rangle$$ and $\CA$ is a tilt of $\CA_{\omega,\beta}$ for the torsion pair $\CT=\CA_{\omega,\beta}\cap \CA=\CP(a,1]$ and $\CF=\CA_{\omega,\beta}\cap \CA[-1]=\CP(0,a]$. There is a natural exact functor
\[
\Phi \colon D^b(\CA) \to D^b(\CAob) \cong D^b(S)
\]
of triangulated categories \cite[Sec.\ 7.3]{Noohi}. The proof given below shows that this functor defines a derived equivalence. 

\begin{proof}[Proof of Proposition~\ref{prop:DADSfortilt}]
The main idea in the proof is to show that $\Phi$ is essentially surjective. For this we make first some observations. 

Take a very ample line bundle $\CO(1)$. The line bundle $\CO(-i)$ will lie in $\CF_{\omega,\beta}$ for $i\gg0$. Recall from \cite[Sec.\ 6]{BridgelandK3} that the central charge $\Zob$ of the stability condition $\sigma_{\omega,\beta}$ sends an object $E \in D^b(S)$ with Mukai vector $v(E)=(r,l,s)$ to
\begin{align}
\label{eq:centralcharge}
Z_{\omega,\beta}(E)=-s +\frac{r}{2}(\omega^2-\beta^2) + l\beta +i(l\omega -r\omega \beta).
\end{align}
Thus there exists an $i_0$ such that for all $i \geq i_0$ the object $\CO(-i)[1]$ lies in $\CP(0,a]$.
Let us assume (after relabelling) that already $i_0=1$ is sufficient.  
%Thus for $i\gg 0$ the object $\CO(-i)[1]$ will lie inside $\CP(0,a]$. Let us assume (after relabelling) that already $i=1$ is sufficient.    

Consider a morphism of sheaves
\[
\CO(-i)^{\oplus m} \xrightarrow{\alpha} \CO(-j)^{\oplus n}.
\]
Since $\CF_{\omega,\beta}$ is the free part of a torsion pair and hence closed under subobjects,
the kernel $K = \mathrm{Ker}(\alpha)$ lies in $\CF_{\omega,\beta}$.
Similarly, $R=\textup{Image}(\alpha)$ is a subsheaf of $\CO(-j)^{\oplus n}$ and lies in $\CF_{\omega,\beta}$.
Therefore the distinguished triangle
\[
K[1] \to \CO(-i)^{\oplus m}[1] \to R [1]
\]
in $D^b(S)$ yields a short exact sequence in $\CP(0,1]$. In particular, $K[1] \in \CP(0,a]$. 

Let $E \in D^b(S)$ be an object.
Using the line bundles $\CO(-i)$ we can find a quasi-isomorphism $O_E\xrightarrow{\simeq} E$ in the homotopy category $K(S)=K(\Coh(S))$, where $O_E=(\dots O_E^{i-1}\to O_E^{i}\to \dots)$ is a (possibly only bounded above) complex whose components are all direct sums of the line bundles $\CO(-i)$ for $i>0$.
Let $c$ be the smallest integer such that the cohomology $\CH^c(E) \in \Coh(S)$ is not isomorphic to zero. Define a new complex 
\[
F_E= ( \dots \, 0 \to \Ker(\partial^{c-1}) \to O_E^c \to O_E^{c+1}\to \, \dots ).
\]
This is a subcomplex of $O_E$ which is bounded and the composition yields a quasi-isomorphism $F_E \xrightarrow{\simeq} E$. 

From the above discussion we infer that $F_E[1]$ is a bounded complex whose components all lie inside $\CP(0,a]$. In particular, the complex $F_E[2]$ viewed inside $K^b(\CP(1,1+a])$ is an element in $D^b(\CA)$. This shows that the realization functor 
\[
\Phi \colon D^b(\CA) \to D^b(\CP(0,1])\cong D^b(S)
\]
is essentially surjective. Invoking 
\cite[Thm.\ A]{ChenHanZhou}
finishes the proof. 
\end{proof}

\begin{cor} \label{cor45435}
\label{cor:US}
For all $\sigma = (\CA,Z) \in U(S)$ we have $D^b(\CA) \cong D^b(S)$.
\end{cor}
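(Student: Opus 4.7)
The plan is a direct reduction to Proposition~\ref{prop:DADSfortilt}. Given $\sigma \in U(S)$, by definition of $U(S)$ there exist $\tau \in V(S)$ and $g \in \widetilde{\textup{GL}^+}(2,\BR)$ with $\sigma = g \cdot \tau$. Writing $g$ as a pair $(T,f)$ where $T \in \textup{GL}^+(2,\BR)$ and $f \colon \BR \to \BR$ is an increasing function satisfying $f(\phi+1) = f(\phi) + 1$, the slicing $\CP$ of $\sigma$ relates to the slicing $\CP_{\tau}$ of $\tau$ by $\CP(\phi) = \CP_{\tau}(f(\phi))$. Only the slicing and central charge are affected; the underlying triangulated category $D^b(S)$ is unchanged.

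Setting $a = f(0)$, the condition $f(1) = f(0) + 1$ gives
\[ \CA = \CP(0,1] = \CP_{\tau}(f(0), f(1)] = \CP_{\tau}(a, a+1]. \]
Since $\tau \in V(S)$, Proposition~\ref{prop:DADSfortilt} applies to the slicing $\CP_{\tau}$ for any real parameter, and in particular yields $D^b(\CP_{\tau}(a,a+1]) \cong D^b(S)$. Combining these two observations produces the desired equivalence $D^b(\CA) \cong D^b(S)$. There is no serious obstacle here — all the substantive work (tilting of geometric hearts, realization functor, essential surjectivity via resolutions by line bundles $\CO(-i)$) has already been carried out in Lemma~\ref{lemma_heart_VS} and Proposition~\ref{prop:DADSfortilt}; the corollary only records the compatibility of these results with the $\widetilde{\textup{GL}^+}(2,\BR)$-action that defines $U(S)$.
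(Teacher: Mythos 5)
Your proof is correct and follows essentially the same route as the paper: write $\sigma$ as a $\widetilde{\textup{GL}^+}(2,\BR)$-translate of some $\tau \in V(S)$, observe that the heart of $\sigma$ is $\CP_{\tau}(a,a+1]$ for some $a \in \BR$, and invoke Proposition~\ref{prop:DADSfortilt}. The only difference is that you spell out explicitly how the $\widetilde{\textup{GL}^+}(2,\BR)$-action reparametrizes the slicing, which the paper leaves implicit.
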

\begin{proof}
Any $\sigma \in U(S)$ is a $\widetilde{\textup{GL}^+}(2,\mathbb{R})$-translate of a unique $\tau \in V(S)$.
Thus we have $\CA=\CP(a,a+1]$ for some $a\in \BR$, where $\CP$ is the slicing corresponding to $\tau$.
The assertion follows from Proposition~\ref{prop:DADSfortilt}.
\end{proof}

\begin{proof}[Proof of Proposition~\ref{prop:DA=DS}]
Corollary~\ref{cor:US} proves the assertion for abelian surfaces. Hence we can assume that $S$ is a K3 surface.

If $\Phi \colon D^b(S) \to D^b(S)$ is a derived auto-equivalence and $\CA \subset D^b(S)$ is a heart, then the restriction
$\Phi|_{\CA} \colon \CA \to \Phi(\CA)$ induces an equivalence $D^b(\CA) \cong D^b(\Phi(\CA))$.
Hence $D^b(\CA) \cong D^b(S)$ if and only of $D^b(\Phi(\CA)) \cong D^b(S)$.
Moreover any auto-equivalence commutes with the $\widetilde{\textup{GL}^+}(2,\BR)$-action.
Since, as discussed earlier, any stability condition in $\Stab^{\dagger}(S)$ can be mapped by an auto-equivalence into the closure of $U(S)$,
and we know the claim for elements in the interior of $U(S)$ by Corollary~\ref{cor45435}, we may therefore assume that $\sigma$ lies on the boundary of $U(S)$. % $\partial U(S)$.

As $\sigma$ is contained in $\overline{U(S)}$, all skyscraper sheaves $\BC_x$ are semistable. After applying an element of $\widetilde{\textup{GL}^+}(2,\BR)$ we may further assume
that all skyscraper sheaves have phase $1$ with respect to $\sigma$.
%What we really mean is that $Z(\BC_x)$ lies on the negative real axis; the phase is only well-defined for semistable objects.

Following ideas of \cite{BayerShort}
we will consider a stability condition $\sigma'=(\CA', Z') \in U(S)$ such that skyscraper sheaves have slope $1$
and approach $\sigma = (\CA,Z) \in \partial U(S)$ by first deforming only the real part of $Z'$ and afterwards the imaginary part of the central charge

Concretely, consider the covering map $\pi \colon \Stab^\dagger(S)\to \CP^+_0(S) \subset \Lambda^G_{\mathrm{alg}}\otimes \BC$
and choose an open ball $B \subset \CP^+_0(S)$ of small radius containing $Z$.
Choose a stability condition $\sigma'=(\CA', Z') \in U(S)$ such that skyscraper sheaves have slope $1$ and such that the line from $Z'$ to $\Re Z + \Im Z'$ and the line from $\Re Z + \Im Z'$ to $Z$ viewed in the vector space $\Lambda^G_{\mathrm{alg}}\otimes \BC$ are contained inside $B$.
Let $\tilde{Z}$ be the stability function $\Re Z + \Im Z'$ and let $\tilde{\sigma}= (\tilde{\CA}, \tilde{Z})$ be the stability condition obtained from the covering property of $\pi$.
By construction all skyscraper sheaves remain of phase $1$ along this deformation from $\sigma$ to $\sigma'$. %$\tilde{\sigma}$ to $\sigma'$.

The crucial observation now is that the stability condition $\tilde{\sigma}$ is still contained in the open subset $U(S)$. Indeed, recall that the set $U(S)$ can be characterized as the set of all stability conditions for which all skyscraper sheaves $\BC_x$ are stable of the same phase. 
Assume that a skyscraper sheaf $\BC_x$ becomes unstable along the line segment from $Z'$ to $\tilde{Z}$.
Since semistablity is a closed property, there would have to exist a $\tau$ on this line segment where $\BC_x$ becomes semistable.
Since the imaginary part of the central charges stays constant along the path,
$\BC_x$ is still contained in the abelian category $\CP(1)$, where $\CP$ is the slicing associated to $\tau$.
As $\BC_x$ is semistable, there exists a stable object $F \in \CP(1)$ and a non-zero morphism $F \to \BC_x$ which is not an isomorphism.
Since being stable is an open property \cite[Prop.\ 2.10]{BayerBridgeland}, the object $F$ was also stable for a stability condition on the line segment where $\BC_x$ is stable. However, a morphism between stable objects of the same phase is either an isomorphism or $0$, yielding a contradiction. We conclude that $\tilde{\sigma} \in U(S)$.

Let $\tilde{\CP}$ be the the slicing associated to $\tilde{\sigma}$.
Then as argued in \cite[Lem.\ 5.2]{BayerShort} the abelian category 
$\tilde{\CA}=\tilde{\CP}(1/2, 3/2]$ is constant along
a deformation that only changes the imaginary part of the stability condition.
%In particular, for all stability conditions $\tau$ on the line segment from $Z'$ to $Z$ with associated slicing $\CP_\tau$ we have $\CP_\tau(1/2,3/2]=\tilde{\CA}$.
This yields $\CP(1/2,3/2]=\tilde{\CA}$, where $\CP$ is the slicing associated to $\sigma$.
Let $g \in \widetilde{\textup{GL}^+}(2,\BR)$ denote the rotation by $\pi/2$.
Then $\tilde{\CA}$ is the heart of both $g\tilde{\sigma}$ and $g \sigma$.
Since $\widetilde{\textup{GL}^+}(2,\BR)$ preserves $U(S)$, we have $g\tilde{\sigma} \in U(S)$ and therefore by Corollary~\ref{cor:US} we conclude that $D^b(\tilde{\CA})\cong D^b(S)$.
\end{proof}
\begin{rmk} \label{rmk:symp_heart_algebraic}
Given an algebraic stability condition $\sigma=(\CA,Z) \in \Stab^{\dagger}(S)$, the proof above shows that in Proposition~\ref{prop:DA=DS} one can choose the element $g$ such that $g\sigma$ is algebraic as well.
Indeed, this is immediate for stability conditions which are mapped by some auto-equivalence into $U(S)$.
For $\sigma \in \partial U(S)$, we first applied an element from $\widetilde{\textup{GL}^+}(2,\BR)$ so that skyscraper sheaves get mapped to $-1$ and then applied the rotation by $\pi/2$.
If $\sigma$ is algebraic, both steps can be achieved by multiplying $Z$ with elements from $\BQ+i \BQ$.
\end{rmk}

\section{The Euler characteristic of fixed loci} \label{subsec:DTtheory}
We state a result which may be viewed as a numerical version of Theorems~\ref{mainthm4}:

Let $M = M_{\sigma}(v)$ be a moduli space of stable objects of Mukai vector $v$ on a K3 surface $S$, and let $g \colon M \to M$ be a symplectic automorphism of finite order.
Let $\pi_g = \prod_{a} a^{m(a)}$ be the frameshape of the induced action on the Mukai lattice $\Lambda$ (obtained from lifting the action on $H^2(M,\BZ)$ to $\Lambda$, see Section~\ref{subsec:proof mainprop}).
We define the modular form
\[ f_g(q) = \prod_{a} \eta( q^a )^{m(a)} = q + O(q^2), \]
where $\eta(q) = q^{1/24} \prod_{m \geq 1} (1-q^{m})$ is the Dedekind elliptic function. 
\begin{prop}
$e(M_{\sigma}(v)^g) = \textup{Coefficient of $q^{v \cdot v/2}$ of } f_g(q)^{-1}$.
\end{prop}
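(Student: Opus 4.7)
The plan is to apply Theorem~\ref{mainthm4} to decompose $M^{g}$ into moduli spaces in the equivariant category, compute the Euler characteristic of each piece via Göttsche's formula, and then match the total against $[f_g(q)^{-1}]_{q^{v^{2}/2}}$ through a classical lattice-theoretic identity.

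Since $G \coloneqq \langle g\rangle$ is cyclic, Proposition~\ref{mainprop}(b) lifts $g$ to a symplectic $G$-action on $D^{b}(S)$ preserving $\sigma$ and $v$, whose induced action on the Mukai lattice $\Lambda$ extends the action on $v^{\perp}/\BZ v \cong H^{2}(M,\BZ)$ by Hodge isometries and therefore has frameshape $\pi_g$. Combining Theorem~\ref{mainthm2} and Proposition~\ref{prop:induced_Stab}, the equivariant category is equivalent to $D^{b}(S',\alpha)$ for a twisted K3 surface $(S',\alpha)$ with $\sigma_G \in \Stab^{\dagger}(S',\alpha)$ (the abelian-surface alternative being ruled out by the signature of $\Lambda^{G}_{\mathrm{alg}}$). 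Theorem~\ref{mainthm4} then yields
\[
M^{g} \cong \bigsqcup_{v'\in \overline{R}_v} M_{\sigma_G}(v'),
\]
and Göttsche's formula (together with its standard twisted extension) for Bridgeland-moduli on K3 surfaces gives $e(M_{\sigma_G}(v')) = \bigl[\eta(q)^{-24}\bigr]_{q^{(v')^{2}/2}}$, with the convention that this vanishes for $(v')^{2} < -2$. Summing,
\[
e(M^{g}) \,=\, \sum_{v'\in \overline{R}_v} \bigl[\eta(q)^{-24}\bigr]_{q^{(v')^{2}/2}}.
\]

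The main obstacle is the lattice-theoretic identity that matches this sum with $[f_g(q)^{-1}]_{q^{v^{2}/2}}$. By Lemma~\ref{lem:cohomology}, the saturation $L\subset \Lambda'$ of $q(\Lambda)$ embeds as $L(|G|)\hookrightarrow \Lambda^{G}$ of finite index, and $\Lambda' \supset L\oplus L^{\perp}$ has finite index; the fiber $R_v = p^{-1}(v)$ decomposes (up to these corrections) as a coset $v_L + L^{\perp}$ with $(v')^{2} = (v_L)^{2}+(v_\perp)^{2}$ and $(v_L)^{2} = v^{2}/|G|$. Since $G^{\vee}$ acts freely on $R_v$, the sum factorizes through a shifted theta function $\Theta_{L^{\perp}}$ of the negative-definite lattice $L^{\perp}$, and the proposition reduces to a classical twisted theta/eta-product identity for finite-order symplectic isometries of the K3 Mukai lattice of frameshape $\pi_g$, relating $\eta(q^{|G|})^{-24}\,\Theta_{L^{\perp}}(q^{|G|})$ to $|G^{\vee}|\cdot f_g(q)^{-1}$ as formal Laurent series in $q$. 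This identity may be established either case-by-case against the $82$ $\mathrm{O}_{+}(\Lambda)$-conjugacy classes listed in Section~\ref{subsec:classification} using the explicit lattice data of \cite{PV,CHVZ}, or more uniformly by recognizing $f_g^{-1}$ as a McKay--Thompson-type series underlying the $g$-twisted elliptic genus of the K3 nonlinear sigma model \cite{HuybrechtsConway,GHV}; equivalently, by deformation to the natural pair $(\Hilb^{n}(S_{0}), g_{0})$ with $g_{0}$ induced by a symplectic K3 automorphism of the same frameshape, where the claim reduces to the classical equivariant Göttsche formula. Establishing the identity in the required generality is the hard part of the proof.
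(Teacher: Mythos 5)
Your strategy is genuinely different from the paper's, but it has two gaps that prevent it from being a proof. First, the step ``Combining Theorem~\ref{mainthm2} and Proposition~\ref{prop:induced_Stab}, the equivariant category is equivalent to $D^b(S',\alpha)$ for a twisted K3 surface'' is not available: Theorem~\ref{mainthm2} only produces such an equivalence \emph{if} one can exhibit a $2$-dimensional connected component of the good moduli space of some $\CM_{\sigma}(w)^G$, and the paper explicitly points out (e.g.\ in the order-$11$ example) that finding such a component can be out of reach without concrete geometric input. Whether $D^b(S)_G$ is always of this form is precisely the open Question~\ref{main open question}; indeed the paper presents this very Euler-characteristic statement as \emph{evidence for} that question, so any proof routed through the equivariant-category decomposition is circular at the current state of knowledge. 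Second, even granting the decomposition $M^g \cong \bigsqcup_{v'} M_{\sigma_G}(v')$, you leave the matching identity between $\sum_{v'\in\overline{R}_v}[\eta^{-24}]_{q^{(v')^2/2}}$ and $[f_g^{-1}]_{q^{v^2/2}}$ unproved, and you would additionally have to handle non-primitive classes $v'\in\overline{R}_v$ (which do occur, cf.\ the vector $2v_1$ in Section~\ref{subsec:involution on genus 2}), for which the Göttsche-type formula you quote does not apply as stated.

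The paper avoids all of this by a purely cohomological argument: since $g$ has finite order and $M^g$ is smooth, $e(M^g)=\Tr\bigl(g_*\mid H^*(M,\BC)\bigr)$ by the Lefschetz fixed point theorem. Using \cite{Mongardi2} the action on $H^2(M,\BZ)$ lifts to a determinant-one isometry of $\Lambda$ fixing $v$, so by \cite[Lem.\ 4.13]{Markman} $g_*$ lies in the image of the integrated LLV representation $\mathrm{SO}(H^2(M,\BC))\to\mathrm{SO}(H^*(M,\BC))$. The trace of such an element depends only on the frameshape and on $\dim M$, so one may compute it on $\Hilb_n(S)$, where the equivariance of the Nakajima operators \cite{Ob} identifies $\oplus_k H^*(\Hilb_k(S))$ with the symmetric algebra on the eigenspaces of $\bar\varphi$ on $\Lambda\otimes\BC$ and yields the generating series $f_g(q)^{-1}$ directly. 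If you want to salvage your approach, you would need to restrict to situations where the equivariant category is already known to be a twisted K3 category and then prove the theta/eta identity; as it stands, the cohomological route is both more general and self-contained.
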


Here $e(Z)$ denotes the topological Euler characteristic of a finite type scheme $Z$.
If $M$ is the Hilbert scheme of points and the automorphisms is induced by an automorphisms of the underlying surface,
this follows by a local analysis, see \cite{BO} and also \cite{BG} for the extension to non-cyclic groups.
The general case is evidence for an affirmative answer to Question~\ref{main open question}.

\begin{proof}
We prove the claim by computing the trace of the induced automorphism $g_{\ast} \colon H^{\ast}(M,\BZ) \to H^{\ast}(M,\BZ)$.
The action $g_{\ast}$ is a monodromy operator.
By Mongardi \cite[Thm.\ 26]{Mongardi2} the unique lift of $g^{\ast}|_{H^2(M,\BZ)}$ to an automorphism of the Mukai lattice
fixes $v$. By the classification \cite[App.\ C]{PV} the lift is of determinant $1$.
This shows that $g_{\ast}|_{H^2(M,\BZ)}$ is also of determinant $1$.
Using \cite[Lem.\ 4.13]{Markman} we conclude that $g_{\ast}$ lies in the image of the
canonical representation of the (intergrated) Looijenga--Lunts--Verbitsky (LLV) Lie algebra
\[ \mathrm{SO}(H^2(M,\BC)) \to \mathrm{SO}(H^{\ast}(M,\BC)). \]

It hence remains to prove that, given an element $\varphi \in \mathrm{SO}(H^2(M,\BC))$ of finite order with frameshape $\prod_{a} a^{m(a)}$ (when extended to $\Lambda \otimes \BC$ by the identity on $H^2(M,\BC)^{\perp}$), then the trace of $\varphi$ on $H^{\ast}(M,\BC)$ has the given form.
Since this is a purely topological question, we may assume $M = \Hilb_n(S)$ where $n=(v \cdot v)/2 + 1$.
Moreover, after conjugation by an element in $\mathrm{SO}(H^2(M,\BC))$ %(which does not change the trace)
we may assume that the lifted element $\bar{\varphi} \in \mathrm{SO}(\Lambda \otimes \BC)$ preserves the decomposition by degree and
acts as the identity on $H^0(S,\BC) \oplus H^4(S,\BC)$.
In particular, $\bar{\varphi}$ induces an action on $H^{\ast}(\Hilb_k(S))$ for any $k$.
By the formulas for the LLV Lie algebra action in \cite{Ob} in terms of Nakajima operators,
the Nakajima operators are equivariant with respect to the action of $\bar{\varphi}$ on $\Lambda \otimes \BC$ and $H^{\ast}(\Hilb_k(S))$.
If $V_i$ are the eigenspace of $\bar{\varphi}$ on $\Lambda \otimes \BC$ with eigenvalue $\lambda_i$ this yields
$\oplus_{k \geq 0} H^{\ast}(\Hilb_k(S)) = \otimes_{i=1}^{24} \mathrm{Sym}^{\bullet}(V_i)$ and thus
\[
\sum_{k \geq 0} \mathrm{Tr}\left( \varphi | H^{\ast}(\Hilb_k(S)) \right) q^n = \prod_{m \geq 1} \prod_{i=1}^{24} \frac{1}{(1-\lambda_i q^m)} = \prod_{n \geq 1} \prod_{a \geq 1}  \left( \frac{1}{1-q^{an}} \right)^{m(a)}
\]
where the last equality follows from a direct computation.
\end{proof}

\begin{comment}
\[ f_{1^8 2^8}
=
\frac{1}{\eta(q)^{8} \eta(q^2)^{8}} = 
\frac{1}{q} + 8 + 52q + 256q^{2} + 1122q^{3} + 4352q^{4} + 15640q^{5} + 52224q^{6} + 165087q^{7} + 495872q^{8} + O(q^{9})
\]
\[ f_{2^{12}}
=
\frac{1}{q} + 12q + 90q^{3} + 520q^{5} + 2535q^{7} + 10908q^{9} + 42614q^{11} + 153960q^{13} + 521235q^{15} + 1669720q^{17} + O(q^{19})
\]
\[
f_{1^{-8} 2^{12}}
= \frac{1}{q} - 8 + 36q - 128q^{2} + 402q^{3} - 1152q^{4} + 3064q^{5} - 7680q^{6} + 18351q^{7} - 42112q^{8} + O(q^{9})
\]
\end{comment}

\end{document}